\documentclass[11pt,reqno]{amsart}

\usepackage{amsmath}
\usepackage{dsfont}  
\usepackage{amsthm}
\usepackage{graphicx}
\usepackage{mathtools}
\usepackage{amsfonts}
\usepackage{amssymb}
\usepackage{hyperref}
\usepackage{enumerate}
\usepackage{enumitem}
\usepackage{bm}
\usepackage{comment}
\usepackage[margin=1 in]{geometry}
\parindent=.25in
\usepackage[normalem]{ulem}
\usepackage{tikz}
\usepackage{tikz-cd}
\usepackage{xcolor}
\usetikzlibrary{matrix,arrows,positioning,automata}
  \usepackage{cancel}

\usepackage[numbers]{natbib}
\usepackage{todonotes}

\numberwithin{equation}{section}
\newtheorem{theorem}{Theorem}[section]
\newtheorem{lemma}[theorem]{Lemma}
\newtheorem{proposition}[theorem]{Proposition}

\newtheorem{assumption}[theorem]{Assumption}

\theoremstyle{definition}
\newtheorem{example}[theorem]{Example}\newtheorem{definition}[theorem]{Definition}
\newtheorem{remark}[theorem]{Remark}

\def\E{{\mathbb E}}

\def\R{{\mathbb R}}
\def\N{{\mathbb N}}

\def\P{{\mathcal P}}

\def\Q{{\mathcal Q}}

\def\L{{\mathcal L}}

\def\F{{\mathcal F}}

\def\red{\textcolor{red}}
\def\blue{\textcolor{blue}}

\DeclareMathOperator*{\argmin}{arg\,min}
\DeclareMathOperator*{\essinf}{ess\,inf}

\newcommand\cA{\mathcal A}

\newcommand\cE{\mathcal E}
\newcommand\cF{\mathcal F}
\newcommand\cG{\mathcal G}

\newcommand\cJ{\mathcal J}

\newcommand\cL{\mathcal L}

\newcommand\cN{\mathcal N}

\newcommand\cP{\mathcal P}

\newcommand\cT{\mathcal T}

\newcommand\cV{\mathcal V}
\newcommand\cW{\mathcal W}

\newcommand\cY{\mathcal Y}
\newcommand\cZ{\mathcal Z}


\def \E{\mathbb{E}}
\def \F{\mathbb{F}}

\def \L{\mathbb{L}}

\def \N{\mathbb{N}}

\def \P{\mathbb{P}}
\def \Q{\mathbb{Q}}
\def \R{\mathbb{R}}

\newcommand{\pa}[1]{\left(#1\right)}

\newcommand{\sqbra}[1]{\left[#1\right]}
\newcommand{\abs}[1]{\left|#1\right|}

\newcommand{\jo}[1]{{\textcolor{blue}{#1}}}

\date{\today}
\thanks{The first and second authors (in alphabetical order) acknowledge financial support by the Deutsche Forschungsgemeinschaft (DFG, German Research Foundation) -- SFB 1283/2 2021 -- 317210226.\ The third and fourth acknoweldge  financial support by NSF CAREER award DMS-2143861 and the AMS Claytor-Gilmer fellowship.}
 
\title[Pasting and Donsker-type results]{Pasting of equilibria and Donsker-type results for mean field games}
\author{Jodi Dianetti, Max Nendel, Ludovic Tangpi and Shichun Wang}
\address{University of Rome Tor Vergata}
\email{jodi.dianetti@gmail.com}
\address{Bielefeld University}
\email{max.nendel@uni-bielefeld.de}
\address{Princeton University}
\email{ludovic.tangpi@princeton.edu}
\email{shichun.wang@princeton.edu}

\begin{document}

\begin{abstract}
This paper studies the relation between equilibria in single-period, discrete-time and continuous-time mean field game models.
First, for single-period mean field games, we establish the existence of equilibria and then prove the propagation of the Lasry-Lions monotonicity to the optimal equilibrium value, as a function of the realization of the initial condition and its distribution.
Secondly, we prove a pasting property for equilibria; 
that is, we construct equilibria to multi-period discrete-time mean field games by recursively pasting the equilibria of suitably initialized single-period games.
Then, we show that any sequence of equilibria of discrete-time mean field games with discretized noise converges (up to a subsequence) to some equilibrium of the continuous-time mean field game as the mesh size of the discretization tends to zero.
When the cost functions of the game satisfy the Lasry-Lions monotonicity property, we strengthen this convergence result by providing a sharp convergence rate. 
\end{abstract}

\maketitle

\date{today}
\section{Introduction}

Mean field games (MFGs) were introduced independently in \cite{huang2006large, lasry2007mean} and nowadays represent the standard way to analyze symmetric games with a large number of players in weak interaction. 
We refer to the two-volume textbook \cite{cardelbook1, cardelbook2} and to the references therein for a detailed discussion on MFGs and their numerous applications.
Within the extensive literature on MFGs, a clear distinction can be made in terms of the time-set on which players' actions can take place.
This can be in discrete or continuous-time.
A dominant majority of works is concerned with continuous-time models. 
This setting lends itself well to powerful analytical methods based on (systems of) partial differential equations and (forward) backward stochastic differential equations (FBSDE).
We refer for instance to \cite{cardaliaguet2019master,cardelsicon,CarmonaLacker15,Car-Del-Lack16,delarue2020master,dianetti.ferrari.fischer.nendel.2022unifying,fischer2017connection,Gang-Mes-Mou-Zhang22,gomes.mohr.souza.2013,jackson.tangpi.2023quantitative,lacker.2016general,Mou-Zhang22} among many references.
On the other hand, discrete-time models are popular in many applications in economics (see e.g. \cite{Weintraubetal2008}) and are widely employed in the context of numerical simulation (see e.g. \cite{guo.hu.xu.zhamg.2019learning}).
In this setting, most works consider models with finite state space, see for instance \cite{Bon-Lavi-Pfei23,Huang12,saldi.basar.raginsky.2018markov,Saldi-Basa-Rag20,Saldi-Basar-Rag23,Wiecek24,Yang-Ye18} and base the analysis on techniques from Markov chain theory.

The present paper focuses on a general discrete-time mean field games with the goal of studying structural and asymptotic properties of discrete-time mean field equilbria (MFE).
We will develop two types of results along these lines: Pasting of discrete-time equilibria and convergence of discrete-time equilibria to continuous-time  analogues.
Pasting procedures (described below) extend the idea of Bellman's optimality principle or dynamic programming to differential games, and pave the way to numerically simulate discrete-time equilibria via iterated single-period games.
On the other hand, the importance of providing sufficient conditions for convergence to continuous-time equilibria is underlined by the general instability of Nash equilibria with respect to small perturbations of the data, which is a well-known phenomenon in the game-theoretic literature.
In the following, we describe the context and main contributions of the paper before discussing related literature.

\subsection{Summary of the main results of the paper} 

The data of the problem are given by functions  $b,L,G$ with suitable domains, and a constant volatility matrix $\sigma$. 
For a time horizon $T>0$ and $k \in \N$, the game is on $k$ periods of length $\delta = \frac Tk$, and we set 
$t_i := \delta i, \ i=0,\dots,k$.
The initial position is described by a random variable $\xi$ with law $m_\xi$, the idiosyncratic noise is a $\xi$-independent discrete-time stochastic process $Z^k=(Z^k_{t_i})_{i=0,...,k}$ {with independent stationary increments}.
In the $k$-period MFG, given a population distribution $m=(m_{t_i})_{i=0,\dots,k}$ with $m_{t_0} = m_\xi$, the representative player chooses a sequence of feedback maps $\alpha = (\alpha_{t_i}(\cdot))_{i=0,...,k}$ in order to minimize the cost
\begin{equation*}
\begin{array}{l}
    J^k_m(\alpha) :=\E\Big[\sum_{i=0}^{k-1}L(X^{\alpha}_{t_i},\alpha_{t_i}(X^{\alpha}_{t_i}), m_{t_i})\delta + G(X^{\alpha}_{t_k}, m_{t_k}) \Big], \\
    \text{subject to } X^\alpha_{t_{i}} = \xi + \sum_{j=0}^{i-1}b(X^\alpha_{t_j},\alpha_{t_j}(X^\alpha_{t_j}), m_{t_j})\delta +  Z^k_{t_{i}},\quad i=0,\dots,k. 
\end{array}
\end{equation*}
An MFE is a pair $(\alpha^k, m^k)$ such that $\alpha^k$ is a minimizer of $J^k_{m^k}$ and 
$\P\circ (X^{\alpha^k}_{t_i})^{-1} = m^k_{t_i}$ for all $i=0,\dots,k$.

\subsubsection{Pasting of discrete-time equilibria}
The first objective of the paper is to give conditions allowing to construct MFEs above as concatenation of single-period equilibria, hence giving further insight in the structure of equilibria.
For instance, it is well-known that the construction of (continuous-time) MFE is often based on monotonicity properties such as Lasry-Lions monotonicity \cite{Card17,LasryLions07} or displacement monotonicity \cite{ahuja2016wellposedness,Gang-Mes-Mou-Zhang22} (also see \cite{Graber-Mezaros23,Mou-Zhang22} for other monotonicity conditions).
However, these methods break down for instance if the coefficients are time-dependent and satisfy different types of monotonicity properties on different intervals.
In this context, pasting results allow to construct equilibria on sub-intervals, where monotonicity holds, and paste them together to recover a global equilibrium.

Our first main result, Theorem \ref{thm.rep.full}, shows that an MFE of the $k$-period problem can be constructed as concatenation of single-period equilibria of games with appropriate terminal costs.
These terminal costs can be thought of as ``value functions'' of single-period games and naturally arise from the dynamic programming principle.
Our construction requires each single-period game (i.e., when $k=1$ in the game described above) to be uniquely solvable.
To this end, we carefully revisit well-posedness of the single-period MFGs by showing existence of  equilibria with \emph{Markovian} controls, and uniqueness is guaranteed by Lasry-Lions monotonicity, see Theorem \ref{thm:exits.one.period} and Proposition \ref{prop:unique.single.period}.
An essential step in our approach is the derivation of a \emph{propagation of monotonicity} from the cost functions to the value of the problem.
This is reminiscent of the propagation of displacement monotonicity recently proved in \cite{gangbo.meszaros.mou.zhang.2022} using the master equation.
The non-uniqueness regime is certainly interesting, but beyond the scope of the current paper.

\subsubsection{Donsker-type results}
Another heuristic description of the result discussed above is that simple single-period MFGs form the basic building block allowing to construct more complex multi-period equilbria.
It is natural to inquire whether this principle extends to general continuous-time equilibria.
This type of question has a long history in probability theory with its most celebrated manifestation being Donsker's construction of the Brownian motion.
The second objective of this work is to recover similar results for continuous-time MFGs driven by Brownian motions.
Here, the continuous-time game takes the following form:
We fix as idiosyncratic noise a $\xi$-independent Brownian motion $W=(W_t)_{t\in [0,T]}$.
For a continuous-time flow of measures $(m_t)_{t\in [0,T]}$, the representative player optimizes over feedback functions $\alpha = (\alpha_t(\cdot))_{t\in [0,T]}$ the cost
\begin{equation*}
\begin{array}{l}
    J_m(\alpha) :=\E\Big[\int_0^TL(X^\alpha_s,\alpha_{s} (X^\alpha_s), m_{s})ds + G(X^\alpha_{T}, m_{T}) \Big], \\
    \text{subject to } X^\alpha_{t} = \xi + \int_0^tb(X^\alpha_{s},\alpha_{s}(X^\alpha_s), m_{s})ds + \sigma W_t ,\quad t \in [0,T]. 
\end{array}
\end{equation*}
The pair $(\alpha, m)$ is an MFE for the continuous-time MFG if $\alpha$ is optimal for $J_m$ and $\P\circ(X^\alpha_t)^{-1} = m_t$ for all $t\in [0,T]$.
In this setting, we show that when the mesh size $\delta$ of the discretization of the time interval $[0,T]$ converges to zero, then any discrete-time equilibria form a tight sequence with any subsequential limit being an equilibrium for the associated continuous-time game.
We thus obtain a sort of \emph{invariance principle for MFGs}.
This requires the discrete-time noise $Z^k$ to converge (in distribution) to $W$.
For instance, the case where $Z^k$ is a sequence of appropriately scaled binomial random variables can be considered, see Theorem \ref{thm:compactness}.

In the special case of games on Gaussian processes (e.g., discretized Brownian motions) and when the cost functions are Lasry-Lions monotone, the above result can be made a lot stronger.
In this case, both the discrete-time and continuous-time games are uniquely solvable; and additional regularity and convexity properties allow to derive a sharp, non-asymptotic rate for the convergence of the discrete-time equilibrium to its continuous-time analogue, see Theorem \ref{thm:BSDE.rate.LL}.
This result builds upon prior developments on backward stochastic difference equations (BS$\Delta$E) as studied for instance in \cite{Brian-Dely-Mem01,Brian-Dely-Mem02,Cher-Stad13,Coh-Elli12} among many others.
These papers study well-posedness of such equations, as well as other crucial properties such as comparison, convex dual representation and robustness.

In addition to presenting new structural properties of MFEs, we believe the results of this paper to be potentially relevant to design efficient numerical simulations of general games.
In fact, the BS$\Delta$E theory allows to reduce the problem of computing a discrete-time MFE to that of iteratively solving BS$\Delta$Es, a problem that is amenable to efficient simulation techniques and for which various methods have been proposed, see e.g. \cite{beck2023overview,bouchard2004discrete,crisan2014second,gobet2005regression,hutzenthaler2020proof,zhang2004numerical}.
We describe this approach in subsection \ref{sec:simulation} without empirical studies as numerical simulations are not the focus of the present work, but recognizing that this demands careful investigation.

\subsection{Related literature} 

There is a rich literature on discrete-time MFGs.
The vast majority of papers on the subject focusing on finite state spaces.
Discrete-time models were first studied in the economic literature \cite{Weintraubetal2008} under the notion of oblivious equilibrium for infinite models.
One of the first theoretical papers on discrete-time MFGs is  \cite{gomes.mohr.souza.2010}.
In this paper the authors focus on existence of stationary solutions of the game and investigate exponential convergence to this stationary solution using an entropy regularization technique.
For subsequent developments on discrete-time  games we refer to \cite{Bon-Lavi-Pfei23,
guo.hu.xu.zhamg.2019learning,
Huang12,
saldi.basar.raginsky.2018markov,
Saldi-Basa-Rag20,
Saldi-Basar-Rag23,
Wiecek24,
Yang-Ye18,
lauriere.Perrin.Geist.Pietquin.2022learning}.
Some references dealing with arbitrary state spaces can be found in \cite{Bonn-Lavi-Pfe21,Car-coo-gra-lau22,Lacker-Ramn19}.
Most of these papers are concerned with solvability issues and applications of reinforcement learning techniques to competitive games.
While we prove well-posedness results in the present paper, this is not our focus.
We rather present structural properties of discrete-time equilibria.
Let us also remark that our results on discrete-time games cover finite state spaces, as the noise sources $\xi$ and $Z^k$ are allowed to be discrete random variables.
To the best of our knowledge, pasting of MFE has been investigated only in \cite{mou.zhang.2024minimal}. 
The approach in this paper is based on the master equation.

While convergence of symmetric $N$-player stochastic differential games to mean field games as $N\to\infty$ has been the subject of intensive research, cf.\ \cite{cardaliaguet2019master,delarue2020master,Djete23AAP,fischer2017connection,jackson.tangpi.2023quantitative,lacker2016general,lacker2020convergence,laurieretangpi,Poss-Tangp24}, a lot less is known about convergence of discrete-time games to their continuous-time analogues.
The paper closest to addressing this issue is
\cite{achdou.camilli.dolcetta.2013mean}, where the convergence of the finite-difference method for the mean field game system of partial differential equations is obtained.
Note that, despite numerical methods typically going through some discretization of the equations related to the MFG problem as in \cite{achdou.camilli.dolcetta.2013mean},  no theoretical interpretation of these discretized equations in terms of discrete-time games is discussed in the literature, which is a question addressed in the current paper. 
Relatedly, the stability of \emph{continuous-time} finite-state MFG master equations towards their continuous-state versions has been studied in the recent paper \cite{bertucci.cecchin.2024mean}, 
while  \cite{hadikhanloo.silva.2019} investigates the convergence of discrete-time MFE with respect to discretizations of the state space in the case of deterministic dynamics with no idiosyncratic noise. 

Closely related to the topic of numerical methods is the problem of providing ``simpler'' problems allowing to theoretically approximate MFE. 
This was first addressed in \cite{CardaliaguetHadikhanloo17}, showing the convergence of the \emph{fictitious play} algorithm in potential MFGs through PDE methods, see also \cite{elie2019approximate, perrin2020fictitious, xie2020provable}. 
While the convergence of algorithms is often shown under the uniqueness of the equilibrium, 
for \emph{submodular} MFGs, it is possible to approximate minimal and maximal MFE via a direct iteration of the best-response map, cf.\ \cite{dianetti.ferrari.fischer.nendel.2019, dianetti.ferrari.fischer.nendel.2022unifying}, and via the fictitious play, cf.\ \cite{dianetti2022strong}, without relying on the uniqueness of the equilibrium, cf.\ \cite{dianetti.ferrari.federico.floccari.2024multiple}.

\subsection{Organization of the paper}
In  Section \ref{section:single period}, we study the properties of single-period MFGs while, in  Section \ref{sec:multip}, we paste the single-period equilibria to construct multi-period equilibria.
Section \ref{sec:convergence to continuous} studies the convergence of the discretized MGgs to continuous-time MFGs.
The related BS$\Delta$Es are introduced and discussed in Section \ref{sec:Donsker}, together with Donsker-type results and the sharp rate of convergence.

\subsection{Notation}
In this section, we introduce some frequently used notation in the paper.
{Let $d \in \mathbb{N}, d>0$ be fixed, and for $x,y \in \R^d$ denote by  $xy$  the scalar product in $\R^d$.
We use $|\cdot|$ to denote both the absolute value and the Euclidean norm on $\R^d$.
For generic $l,l_1, l_2\in \mathbb{N}, l,l_1, l_2>0$ and  matrices $\sigma_1 \in \R^{l_1 \times l},\sigma_2 \in \R^{l \times l_2}$, $\sigma_1 \sigma_2$ denotes the  matrix product, while  $\sigma_1^{\text{\tiny {$\top$}}}$ indicates the transpose of $\sigma_1$.}
For any complete separable metric space $E$, and any $p\ge1$, we denote by $\cP_p(E)$ the set of Borel probability measures $\mu$ on $E$ with finite $p^{\mathrm{th}}$ moment, and we set $\| \mu \|_p := \big( \int_E |x|^p \mu(dx) \big)^{1/p}$, when $E$ is a subset of $\R^d$.
We equip this set with the Wasserstein distance of order $p$ denoted by
\begin{equation*}
  \cW_p(\mu,\nu):=\bigg(\inf_{\pi}\int_{E\times E}d_E(x,y)\pi(dx,dy)\bigg)^{1/p},
\end{equation*}
where $d_E$ is the metric on $E$ and the infimum is taken over all couplings of $\mu$ and $\nu$.
On a complete probability space $(\Omega,\cF,\P)$, we denote by $\L^p(\Omega,\cF,\P)$ the space of all (equivalence classes of) random variables with finite $p^{\mathrm{th}}$ moment and the expectation operator on this space is denoted by $\E^{\P}[\cdot]$.
When there is no potential for ambiguity, we simply write $\L^p$ and $\E[\cdot]$.\
Given a random variable $\xi\colon \Omega\to E$, we denote by 
$m_\xi$ the probability distribution of $\xi$ on $E$.

Throughout, let  $p\in (1,\infty)$ be fixed.
The main protagonists of the paper are an action set $A\subseteq \R^d$ with $0\in A$, which is assumed to be closed and convex, and functions $b$, $G$, and $L$ of the form
\begin{align*}
  &b\colon\R^d\times A\times \cP_p(\R^d)\to \R^d,\\
  &L\colon\R^d\times A\times \cP_p(\R^d)\to \R,\\
  &G\colon\R^d\times \cP_p(\R^d)\to \R.
\end{align*}
In our analysis, we can easily allow the functions $b$ and $L$ to depend on time and possibly have different dimensions in their first and second component.\
However, we do not make these assumptions to ease the presentation.

\section{Single-period MFGs}
\label{section:single period}
This section is devoted to the study of single-period MFGs (also known as static MFGs).\
They will form the main building block allowing to study more general cases.
Single-period MFGs have been only sparsely discussed in the literature (at least for games on general state spaces).\
Some references include \cite{Card-lect13} and \cite{Car-coo-gra-lau22}.\
These references consider rather simple models, in particular, they assume controls to take values in a compact action space and find equilibria in relaxed (or randomized) controls.\
A more recent work is \cite{Ryan23}, which discusses the approximation of static Nash equilibria and MFGs by continuous-time flows.\
In the sequel, we describe the single-period game, we consider, before stating the main result of the section.

Let $\delta>0$\footnote{The case $\delta=0$ is trivial. It does not restrict the generality to assume that $\delta>0$.}  and $(\Omega, \cF, \P)$  be a probability space carrying two $d$-dimensional independent random variables $\xi$ and $Z$ as well as an independent uniformly distributed random variable.
We denote by $\cF_0$ the $\P$-completion of the sigma algebra generated by $\xi$.
We assume $\xi\in \L^p(\Omega,\cF_0,\P)$ and $Z\in \L^p(\Omega,\cF,\P)$.
The set of admissible controls in the single-period game is denoted by $\cA_S(\xi)$ and contains all $\cF_0$-measurable and $p$-integrable random variables $\alpha$ with values in $A$.  

For most of the applications, we have in mind that the random variable $Z$ follows a Gaussian distribution with variance $\sigma\sigma^\top \delta$ for some $d\times d$ matrix $\sigma$.\ We will specialize the discussion to this case later when passing to the continuous-time setting.\ For now, we allow $Z$ to be fairly arbitrary, in particular, we do not exclude the cases of $Z$ being discrete, including $Z\equiv 0$, or $Z$ being a degenerate Gaussian, so that our model also allows for possibly discrete (including finitely many) values of the state, for instance, if $Z$ and $\xi$ follow Bernoulli or binomial distributions.

Given any $\alpha\in \cA_S(\xi)$, the controlled state variable is of the form
\begin{equation*}
  X^{\alpha} := \xi + b(\xi, \alpha, m_\xi)\delta +  Z.
\end{equation*}
We understand $X^\alpha$ as the value after one period of a state process started at time $0$ in the position $\xi$.\ In the single-period mean field game, we fix a probability measure $m\in \cP_p(\R^d)$ to represent the distribution of the population's state.\ The representative agent then chooses a strategy $\alpha\in \cA_S(\xi)$ that minimizes the cost $J_m$, given by
\begin{equation}\label{eq.def.cost}
    J_m(\alpha):=\E\big[L(\xi,\alpha, m_\xi)\delta + G(X^\alpha, m) \big]\quad\text{for }\alpha\in \cA_S(\xi).
\end{equation}
A mean field equilibrium (MFE) is a pair $(\alpha,m)$ such that $\alpha\in \cA_S(\xi)$ is a minimizer of $J_m$ and 
\begin{equation*}
  \P\circ (X^\alpha)^{-1} = m.
 \end{equation*} 
 In the remainder of the paper, we will refer to this problem as \emph{the single-period mean field game with initial position $\xi$ and terminal cost $G$}.
 This problem is fully determined by the data $(\xi,Z,b, L, G)$.\ Under our standing assumptions stated below, for each $m$, the cost function $J_m$ takes values in $\R\cup\{+\infty\}$ and has nonempty domain. 

\subsection{Existence and uniqueness}\label{sec:single:exists}

In addition to our standing assumptions, we assume the following.

\begin{assumption}(Single-period existence)
    \label{Ass:single.exists}
    Let $q\in [1,p]$ be fixed.
    \begin{enumerate}[label={(\roman*)}]
        \item \label{Ass:Single.ii} The function $b$ is continuous on $\R^d\times A\times \cP_p(\R^d,K)$, where the set $\cP_p(\R^d,K):=\big\{ m\in \cP_p(\R^d)\,\big|\, \int_{\R^d}|z|^p m(dz)\leq K\big\}$ is endowed with the subspace topology of the $\cW_1$-topology and $K$ is given in \eqref{eq:definition.K} below.
        Moreover, $b$ is of linear growth, i.e.
        \begin{equation*}
            |b(x,a,\mu)|\le C_b\big(1 + |x| + |a| + \|\mu\|_1\big)
        \end{equation*}
        for all $(x,a,\mu)\in \R^d\times A\times \cP_p(\R^d)$ and some constant $C_b\ge1$.
        \item \label{Ass:Single.iii} 
        The functions $L$ and $G$ are continuous on $\R^d\times A\times \cP_p(\R^d,K)$ and $\R^d\times \cP_p(\R^d,K)$, respectively.
        Moreover, $L$ and $G$ satisfy the following growth properties:
        \begin{gather*}
            -C_G\le G(x,\mu) \le C_G(1 +|x|^q),\\
            c_L(|a|^p-1)\le L(x,a,\mu)\,\text{ and } L(x, 0,\mu) \le C_L\Big(1 + |x|^q  + \|\mu\|_q^q \Big)
        \end{gather*}
        for all $(x,a,\mu)\in \R^d\times A\times \cP_p(\R^d)$ and for some constants $C_G,C_L\ge0$, and $c_L>0$.
        \item \label{Ass:Single.iv} For all $x\in \R^d$, $m\in \cP_p(\R^d)$ and $\mu\in \cP_p(\R^d)$, there exists a unique minimizer of the static optimization problem $\inf_{a\in A}\big(L(x,a,\mu)\delta + \E[G(x + b(x,a,\mu)\delta + Z,m)]\big)$. 
    \end{enumerate}
\end{assumption}
The constant $K$ is defined as
    \begin{equation}
    \label{eq:definition.K}
        K:=4^p\bigg(2+\E[|\xi|^p] + \frac{1}{c_L\delta}(C_J + C_G)  + \E[|Z|^p] \bigg)
\end{equation}
with    \begin{equation*}
        C_J := 2\max(C_L, 16^{q}C_b\vee 1)\Big(1+ \E[|\xi|^q]  +\E[|Z|^q]\Big) .
    \end{equation*}
Assumption \ref{Ass:single.exists} consists of standard growth conditions and mild regularity assumptions on the coefficients.\
The main condition of interest is the lower bound on $L$, which allows to get the coercivity property
\begin{equation*}
    \liminf_{|a|\to\infty} \frac{L(x,a,m)}{|a|} = +\infty
\end{equation*}
uniformly in $(x,m)\in \R^d\times \cP_q(\R^d)$.\ This condition can be dropped if the controls take values in a compact action space as sometimes assumed in the literature.\

\begin{remark}
    Observe that if $b$ is linear in $a$ and  the functions $G$ and $L$ are convex, one of them being strictly convex, then condition \ref{Ass:Single.iv} is satisfied. 
\end{remark}

\begin{theorem}
\label{thm:exits.one.period}
  Let Assumption \ref{Ass:single.exists} be satisfied.\ Then, the single-period mean field game with data $(\xi,Z, b, L,G)$ admits a mean field equilibrium $(\alpha,m)$.
\end{theorem}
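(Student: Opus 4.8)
The plan is to obtain the equilibrium $(\alpha,m)$ as a fixed point of the best-response map, using that Assumption~\ref{Ass:single.exists}\ref{Ass:Single.iv} makes this map single-valued, so that Schauder's fixed point theorem suffices (Kakutani is not needed). The first step is to describe the best response. Fix $m\in\cP_p(\R^d)$. Since every $\alpha\in\cA_S(\xi)$ is $\cF_0$-measurable and $Z$ is independent of $\cF_0$, conditioning on $\cF_0$ gives $J_m(\alpha)=\E\big[F_m(\xi,\alpha)\big]$ with $F_m(x,a):=L(x,a,m_\xi)\delta+\E\big[G\big(x+b(x,a,m_\xi)\delta+Z,\,m\big)\big]$; hence minimizing $J_m$ over $\cA_S(\xi)$ reduces to minimizing $a\mapsto F_m(x,a)$ at each $x=\xi(\omega)$, and by \ref{Ass:Single.iv} this minimizer, denoted $\hat a_m(x)$, is unique. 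Using the joint continuity of $b,L,G$, together with the bound $|G(y,\mu)|\le C_G(1+|y|^q)$ and $Z\in\L^q$ to justify dominated convergence in the $Z$-expectation, one checks that $F_m$ is jointly continuous in $(x,a)$; combined with the coercivity coming from $L(x,a,\mu)\ge c_L(|a|^p-1)$ and with uniqueness, this shows $x\mapsto\hat a_m(x)$ is continuous (any limit point of $\hat a_m(x_n)$ as $x_n\to x$ must minimize $F_m(x,\cdot)$, hence equals $\hat a_m(x)$), so $\alpha_m:=\hat a_m(\xi)$ is a measurable $A$-valued candidate control.

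Next come the a priori bounds that secure admissibility and identify the domain of the fixed-point map. Comparing the pointwise minimizer with $a=0$ and using the upper bounds on $L(\cdot,0,\cdot)$ and $G$ and the linear growth of $b$, one gets $J_m(\alpha_m)\le J_m(0)\le C_J$ uniformly in $m$, so in particular $\alpha_m$ is the unique minimizer of $J_m$ once integrability is known. On the other hand $J_m(\alpha_m)\ge c_L\delta\,\E[|\alpha_m|^p]-c_L\delta-C_G$, which bounds $\E[|\alpha_m|^p]$; substituting this into $X^{\alpha_m}=\xi+b(\xi,\alpha_m,m_\xi)\delta+Z$ and again invoking the linear growth of $b$ gives $\E\big[|X^{\alpha_m}|^p\big]\le K$, with $K$ exactly the constant in \eqref{eq:definition.K}. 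The same computation yields a pointwise estimate $|\hat a_m(x)|\le C(1+|x|^{q/p})$, uniform in $m$, which (since $q\le p$ and $\xi\in\L^p$) shows $\alpha_m\in\cA_S(\xi)$ and $J_m(\alpha_m)<\infty$. We therefore set $\cC:=\cP_p(\R^d,K)$, which is convex and, since $p>1$, compact for the $\cW_1$-topology (uniform tightness plus uniform integrability of first moments), and define $\Phi\colon\cC\to\cC$, $\Phi(m):=\P\circ(X^{\alpha_m})^{-1}$; the bound $\E[|X^{\alpha_m}|^p]\le K$ shows $\Phi(\cC)\subseteq\cC$.

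The crux is the continuity of $\Phi$. Let $m_n\to m$ in $\cW_1$ with $m_n,m\in\cC$. For each fixed $x$, continuity of $G$ in the measure variable on $\cP_p(\R^d,K)$ together with a domination argument gives $F_{m_n}(x,\cdot)\to F_m(x,\cdot)$ uniformly on compacts; these functions are coercive uniformly in $n$, so their minimizers lie in a fixed ball, and uniqueness of $\argmin F_m(x,\cdot)$ then forces $\hat a_{m_n}(x)\to\hat a_m(x)$. Since $|\hat a_{m_n}(x)|\le C(1+|x|^{q/p})$ uniformly in $n$ and $(1+|\xi|^{q/p})\in\L^p$, dominated convergence gives $\alpha_{m_n}\to\alpha_m$ in $\L^p$, hence $X^{\alpha_{m_n}}\to X^{\alpha_m}$ in $\L^p$ by continuity and linear growth of $b$, and finally $\Phi(m_n)\to\Phi(m)$ in $\cW_p$, a fortiori in $\cW_1$. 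Schauder's fixed point theorem, applied to the continuous self-map $\Phi$ of the convex compact set $\cC$ (regarded inside the locally convex space of finite signed measures with the weak topology, which induces the same topology as $\cW_1$ on $\cC$), produces $m^\ast\in\cC$ with $\Phi(m^\ast)=m^\ast$; then $\alpha_{m^\ast}$ is the unique minimizer of $J_{m^\ast}$ and $\P\circ(X^{\alpha_{m^\ast}})^{-1}=m^\ast$, so $(\alpha_{m^\ast},m^\ast)$ is an MFE.

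The main obstacle is precisely the continuity of $\Phi$: one must upgrade $\cW_1$-convergence of the population measures to convergence of the \emph{optimizers} $\hat a_{m_n}$, which is where the uniqueness Assumption~\ref{Ass:Single.iv} (to identify subsequential limits) and the coercivity from the lower bound on $L$ (to keep minimizers in a fixed ball and to obtain the uniform pointwise bound enabling the dominated-convergence step) are both indispensable. The accompanying technical points — the joint continuity of $(x,a)\mapsto\E[G(x+b(x,a,m_\xi)\delta+Z,m)]$ and the locally uniform convergence $F_{m_n}\to F_m$, both resting on the $C_G(1+|y|^q)$ growth and on $Z\in\L^q$ — are routine but must be carried out carefully.
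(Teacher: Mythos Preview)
Your proof is correct and follows essentially the same approach as the paper: both reduce to the pointwise minimizer (the paper's Lemma~\ref{lem:optimization}, your $\hat a_m$), derive the a~priori bound $\E[|X^\alpha|^p]\le K$ to confine the single-valued best-response map to the $\cW_1$-compact convex set $\cP_p(\R^d,K)$, prove continuity via stability of unique argmins, and conclude by Schauder--Tychonoff. Your explicit pointwise bound $|\hat a_m(x)|\le C(1+|x|^{q/p})$, used to run dominated convergence in the continuity step, is a minor presentational addition rather than a different route.
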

The proof of Theorem \ref{thm:exits.one.period} builds on the following lemma:
\begin{lemma}
    \label{lem:optimization}
    Let Assumption \ref{Ass:single.exists} be satisfied and consider the function
        \[
        \cJ(x,\mu,m):=\inf_{a\in A} \bigg(L(x,a,\mu)\delta+\int_{\R^d} G\big(x+b(x,a,\mu)\delta+ z,m\big)m_Z(d z)\bigg)
    \]
    for $(x,\mu,m)\in \R^d\times \cP_p(\R^d)^2$. Then, there exists a unique function $a^*\colon \R^d\times \cP_p(\R^d)^2\to A$ such that for each $(x, \mu,m)\in \R^d\times \cP_p(\R^d)^2$
    it holds
 \begin{equation}\label{eq.lem.single-opt}
    \cJ(x,\mu,m)=L\big(x,a^*(x,\mu,m),\mu\big)\delta+\int_{\R^d} G\Big(x+b\big(x,a^*(x,\mu,m),\mu\big)\delta + z,m\Big)m_Z(d z).
 \end{equation}  
 The function $a^*$ is continuous on $\R^d\times \cP_p(\R^d,K)^2 $.
    \end{lemma}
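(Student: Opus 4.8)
The pointwise part of the statement is essentially a restatement of Assumption~\ref{Ass:single.exists}\ref{Ass:Single.iv}. Writing
\[
\Phi_{x,\mu,m}(a):=L(x,a,\mu)\delta+\int_{\R^d}G\big(x+b(x,a,\mu)\delta+z,m\big)\,m_Z(dz),\qquad a\in A,
\]
one first notes that $\Phi_{x,\mu,m}$ is real-valued: $L$ is real-valued, and the bound $-C_G\le G(y,m)\le C_G(1+|y|^q)$ together with the linear growth of $b$ and $Z\in\L^p\subseteq\L^q$ (recall $q\le p$) makes the integrand $m_Z$-integrable. Since $\E[G(x+b(x,a,\mu)\delta+Z,m)]=\int_{\R^d}G(x+b(x,a,\mu)\delta+z,m)\,m_Z(dz)$, Assumption~\ref{Ass:single.exists}\ref{Ass:Single.iv} says precisely that $\Phi_{x,\mu,m}$ has a unique minimizer over $A$; calling it $a^*(x,\mu,m)$ defines a function on $\R^d\times\cP_p(\R^d)^2$ satisfying \eqref{eq.lem.single-opt}. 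Everything beyond this concerns the asserted continuity, and the plan is to establish it by a subsequence-compactness argument.

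First I would show that along any convergent sequence $(x_n,\mu_n,m_n)\to(x,\mu,m)$ in $\R^d\times\cP_p(\R^d,K)^2$, the minimizers $a_n:=a^*(x_n,\mu_n,m_n)$ form a bounded sequence. By optimality of $a_n$ and $0\in A$,
\[
c_L\delta\big(|a_n|^p-1\big)-C_G\le\Phi_{x_n,\mu_n,m_n}(a_n)\le\Phi_{x_n,\mu_n,m_n}(0),
\]
the left inequality using the lower bounds $L\ge c_L(|a|^p-1)$ and $G\ge-C_G$. The right-hand side is bounded uniformly in $n$: $L(x_n,0,\mu_n)\le C_L(1+|x_n|^q+\|\mu_n\|_q^q)$ with $\|\mu_n\|_q\le\|\mu_n\|_p\le K^{1/p}$ by the definition of $\cP_p(\R^d,K)$ and $(|x_n|)$ bounded, while the $G$-term is controlled through $G\le C_G(1+|\cdot|^q)$, the linear growth of $b$ (so $|b(x_n,0,\mu_n)|$ is bounded, again since $\|\mu_n\|_1\le K^{1/p}$), and $\E[(1+|Z|)^q]<\infty$. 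This gives $\sup_n|a_n|<\infty$.

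Next I would prove joint continuity of $(x,a,\mu,m)\mapsto\Phi_{x,\mu,m}(a)$ on $\R^d\times A\times\cP_p(\R^d,K)^2$: along a convergent sequence of arguments the $L$-term converges by continuity of $L$ (Assumption~\ref{Ass:single.exists}\ref{Ass:Single.iii}), and for the $G$-term, continuity of $b$ (Assumption~\ref{Ass:single.exists}\ref{Ass:Single.ii}) and of $G$ yields convergence of the integrand pointwise in $z$, while the linear growth of $b$ together with the uniform bounds on $|x|$, $|a|$, $\|\mu\|_1$ along the sequence produces a domination $|G(x+b(x,a,\mu)\delta+z,m)|\le C(1+|z|^q)$ with $C$ independent of the index and of $z$, so dominated convergence applies since $1+|\cdot|^q\in\L^1(m_Z)$. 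With these two ingredients, from an arbitrary subsequence of $(a_n)$ I would extract a further subsequence $a_{n_j}\to\bar a\in A$ (possible by boundedness and closedness of $A$), pass to the limit on both sides of $\Phi_{x_{n_j},\mu_{n_j},m_{n_j}}(a_{n_j})\le\Phi_{x_{n_j},\mu_{n_j},m_{n_j}}(a)$ for each fixed $a\in A$ using joint continuity, obtain $\Phi_{x,\mu,m}(\bar a)\le\Phi_{x,\mu,m}(a)$, and conclude $\bar a=a^*(x,\mu,m)$ by uniqueness of the minimizer. Since every subsequence of the bounded sequence $(a_n)$ thus has a further subsequence converging to the single limit $a^*(x,\mu,m)$, the whole sequence converges, which is the claimed continuity.

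The hard part will be the bookkeeping in the two dominated-convergence steps: one has to verify that $|x_n|$, $|a_n|$, $\|\mu_n\|_1$, $\|\mu_n\|_q$ — and hence $|b(x_n,a_n,\mu_n)|$ — are bounded \emph{uniformly} in $n$, which is exactly where the restriction to $\cP_p(\R^d,K)$ and the hypothesis $q\le p$ enter, and that the resulting domination is $n$- and $z$-independent. This is also the reason continuity of $a^*$ can only be asserted on $\cP_p(\R^d,K)^2$, where $b$, $L$, $G$ are assumed continuous. Note that no compactness of $A$ is used; its usual role is replaced here by the coercivity furnished by the lower bound on $L$.
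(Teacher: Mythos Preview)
Your proposal is correct and follows essentially the same route as the paper's proof for the continuity part: bound the minimizers along a convergent sequence via coercivity of $L$ and comparison with $a=0$, extract a convergent subsequence, pass to the limit in the optimality inequality using continuity of $b,L,G$ and dominated convergence, and conclude by uniqueness of the minimizer. The only difference is organizational: for the pointwise existence of $a^*$ you invoke Assumption~\ref{Ass:single.exists}\ref{Ass:Single.iv} directly, whereas the paper re-derives existence via a minimizing-sequence argument (establishing along the way the bound $\cJ(x,\mu,m)\le C_J(x)$ that it then reuses in the continuity step); your version is more economical but otherwise equivalent.
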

\begin{proof}
      Let $(x, \mu,m)\in \R^d\times \cP_p(\R^d)^2$ be given and let $a^n\in A$ be a minimizing sequence for $\cJ(x, \mu,m)$.
    That is, a sequence such that
    \[
        \cJ(x,\mu,m)\ge \bigg(L(x,a^n,\mu)\delta+\int_{\R^d} G\big(x+b(x,a^n,\mu)\delta+ z,m\big)m_Z(d z)\bigg)  -\frac1n. 
    \]
  Thus, for all $n\in \N$ we have
  \begin{align*}
    L(x, a^n,\mu)\delta &\le \cJ(x, \mu,m) + \frac1n - \int_{\R^d} G\big(x+b(x,a^n,\mu)\delta+ z,m\big)m_Z(d z) \\
    &\le \cJ(x, \mu,m) + \frac1n +C_G.
  \end{align*}
  Using the growth conditions on $b,L$ and $G$ as well as $m\in \cP_p(\R^d)$, it follows that
  \begin{equation}
  \label{eq:L.bounded}
    \cJ( x,\mu, m)\le C_J(x)<\infty
  \end{equation}
  with a constant $C_J(x)$ which does not depend on $m$.
    Indeed, we have
    \[
        \cJ(x,\mu,m) \leq L(x,0,\mu)\delta +C_G\bigg(1+\int_{\R^d}\big|x +b\big(x,0,\mu\big)\delta+ z\big|^qm_Z(d z)\bigg).
    \]
    Moreover, by Assumption \ref{Ass:single.exists},
    \begin{align*}
        \int_{\R^d} \big|x+b\big(x,0,\mu\big)\delta + z\big|^q\,m_Z(d z)&\leq (16^{q-1}C_b\vee 1)\big(1+ |x|^q + \|\mu\|_q^q + \|m_Z\|_q^q\big) 
    \end{align*}
    and
    \[
     L(x,0,\mu)\leq C_L\big(1 + |x|^q  + \|\mu\|^q_q \big).
    \]
    Thus we can take
    \begin{equation}
    \label{eq:Def.CJ}
        C_J(x) = \max(C_L, 16^{q}C_b\vee 1)\big(1+ |x|^q + \|\mu\|_q^q + \|m_Z\|^q_q\big) .
    \end{equation}
  Thus, there is a constant $C>0$ 
  such that
  \begin{equation*}
  \label{eq:alphan.L-bounded}
    L(x, a^n,\mu)\le C \quad \text{for all } n\in \N.
  \end{equation*}
  Therefore, by Condition \ref{Ass:single.exists}.$(ii)$ there is $a^*(x,\mu,m) \in A$ such that up to a subsequence $(a^n)_{n\ge1}$ converges  to $a^*(x,\mu,m)$. 
  Hence, using  lower semicontinuity of $G$ and $L$ 
  we have
  \begin{align*}
    \cJ(x,\mu,m) &\ge \liminf_{n\to\infty}\bigg(L(x,a^n,\mu)\delta+\int_{\R^d} G\big(x+b(x,a^n,\mu)\delta+ z,m\big)m_Z(d z)\bigg)\\
    &\ge \bigg(L(x,a^*(x,\mu,m),\mu)\delta+\int_{\R^d} G\big(x+b(x,a^*(x,\mu,m),\mu)\delta+ z,m\big)m_Z(d z)\bigg),
   \end{align*} 
  where we also used Fatou's lemma.
  This shows that $a^*(x,\mu,m)$ is optimal.
  Uniqueness follows by Condition \ref{Ass:single.exists}.$(iii)$.

  \medskip

  We now show that the mapping $a^*\colon \R^d\times \cP_p(\R^d)^2\to A$ is continuous on $\R^d\times \cP_p(\R^d,K)^2$.
    Let $(x^n,\mu^n,m^n)_{n\geq 1}\subset \R^d\times \cP_p(\R^d,K)^2$ be an arbitrary sequence that converges to  $(x,\mu,m)\in \R^d\times \cP_p(\R^d,K)^2$. Since the sequence $(x_n)_{n\in \N}$ converges, it is bounded by a constant $c\geq0$. Hence, by \eqref{eq:L.bounded},
    \[
     L\big(x^n,a^*(x^n,\mu^n,m^n),\mu^n\big)\delta\leq  C_J(c) + C_G
     \quad\text{for all }n\in \N.
    \]
    By Assumption \ref{Ass:single.exists}, it follows that
    \[
    | a^*(x^n,\mu^n,m^n)|^p\leq 1+ \frac{C_J(c) + C_G}{c_L}
    \quad\text{for all }n\in \N.
    \]
    Since $A$ is closed, by potentially passing to a subsequence, we may w.l.o.g.\ assume that there exists some $a^*\in A$ with $ a^*(x^n,\mu^n,m^n)\to a^*$ as $n\to \infty$.
    Using the continuity of $(b,L)$ and $G$ on $\R^d\times A\times \cP_p(\R^d,K)$ and $\R^d\times \cP_p(\R^d,K)$, respectively, it follows by dominated convergence that
    \begin{align*}
        L(x,a^*,\mu)\delta+\int_{\R^d} G\big(x+b(x,a^*,&\mu) \delta +z,m\big)m_Z(d z)= \lim_{n\to \infty} \Big( L\big(x,a^*(x^n,\mu^n,m^n),\mu^n\big)\delta\\
        &\quad+\int_{\R^d} G\Big(x^n+b\big(x^n,a^*(x^n,\mu^n,m^n),\mu^n\big)\delta + z,m^n\Big)m_Z(d z) \Big) \\
    &\leq  \lim_{n\to \infty} L(x^n,a,\mu^n)\delta+\int_{\R^d} G\big(x^n+b(x^n,a,\mu^n)\delta + z,m^n\big)m_Z(d z)\\
    &= L(x,a,\mu)\delta+\int_{\R^d} G\big(x+b(x,a,\mu)\delta+ z,m\big)m_Z(d z)\quad\text{for all }a\in A.
    \end{align*}
    Hence by uniqueness of the minimizer $a^*=a^*(x,\mu,m)$, and the continuity follows since we have shown that every subsequence has a further subsequence that converges to the same limit $a^*(x,\mu,m)$.
\end{proof}

\begin{proof}[Proof of Theorem \ref{thm:exits.one.period}]
   The existence of a mean field equilibrium will use (a variant of) the Kakutani fixed point theorem.
   We split the proof into two main steps.
   In the first one, we show that the control problem is well-posed for each measure $m$; and in the second and third step, we show that this well-posedness allows to obtain a mapping whose fixed points exist and are mean field equilibria.

   \emph{Step 1: construction of the fixed point mapping.}
  In this step, we show that for each $m \in \cP_p(\R^d)$, the control problem with objective function $J_m$ admits a unique optimal control $\alpha$, which will allow to construct a function $\Psi$ mapping $\cP_p(\R^d)$ into itself as $\Psi(m) = m_{X^\alpha}$.

  Let $m \in \cP_p(\R^d)$ be given.
  By Lemma \ref{lem:optimization}, it follows that
        \begin{align*}
            J_m\big(a^*(\xi,m_\xi,m)\big)\leq \E\Big[L\big(\xi,\bar\alpha(\xi),m_\xi\big)\delta+ G(X^{\bar\alpha},m)\Big]
        \end{align*}    
    for all measurable maps $\bar\alpha\colon \R^d\to A$. 
    This shows that $\alpha := a^*(\xi,m_\xi,m)$ is optimal. 
    This optimal control is $\P$-a.s. unique in $\cA_S(\xi)$.
    In fact, let $\beta\colon \Omega\to A$ be $\sigma(\xi)$-measurable and such that
    \[
        J_m(\beta)\leq \inf_{\alpha\in \cA_S(\xi)} J_m(\alpha).
    \]
    Assume towards a contradiction that $\P(N)>0$ for $N:=\{\beta\neq a^*(\xi,m_\xi,m)\}$, then by Assumption \ref{Ass:single.exists},
        \begin{align*}
            \E\big[\big( L(\xi,&\beta, m_\xi)\delta + G(X^\beta, m)\big)1_{N}\big]\\
            &=\E\bigg[\bigg( L(\xi,\beta, m_\xi)\delta + \int_{\R^d}G\big( \xi+b(\xi,\beta,m_\xi)\delta + z, m)\big)m_Z(dz)\bigg)1_{N}\bigg]\\
        &>\E\bigg[\bigg( L\big(\xi,a^*(\xi,m_\xi,m), m_\xi\big)\delta + \int_{\R^d}G\Big( \xi+b\big(\xi,a^*(\xi,m_\xi,m),m_\xi\big)\delta+z, m)\Big)m_Z(dz)\bigg)1_{N}\bigg]\\
        &=\E\Big[\Big( L\big(\xi,a^*(\xi,m_\xi,m), m_\xi\big)\delta + G\big(X^{a^*(\xi,m_\xi,m)}, m\big)\Big)1_{N}\Big].
        \end{align*}
        Hence,
        \[
        J_m(\beta)>J_m\big(a^*(\xi,m_\xi,m)\big)=\inf_{\alpha\in \cA_S(\xi)} J_m(\alpha) \ge J_m(\beta),
        \]
        which is a contradiction.
        Therefore, $\beta=a^*(\xi,m_\xi,m)$.

        We have thus shown that the following mapping is single-valued and well-defined:
   \begin{equation*}
    \Psi(m) := m_{X^\alpha}, \quad \text{with}\quad \alpha \quad \text{optimal for } J_m.
   \end{equation*}
   In order to conclude the proof, we will show that $\Psi$ admits a fixed point.
  It is immediate that the set $\cP_p(\R^d,K)$ is convex and $\cW_1$-compact.

\medskip

  \emph{Step 2: A priori estimation.}
  In this step, we construct $K>0$ such that if $\alpha$ is optimal for $J_m$, then $\int_{\R^d}|z|^pm(dz)\le K$. 
  Using the growth condition on $b$, for given $p \geq 1$ we have
\begin{equation*}
  \E[|X^\alpha|^p] \le 4^p(1+\E[|\xi|^p] + \E[|\alpha|^p] + \E[|Z|^p]).
\end{equation*}

On the other hand, by \eqref{eq:L.bounded} and optimality of $\alpha$ for $J_m$, we have by the growth condition on $L$ that
\begin{align*}
  c_L(\E[|\alpha|^p] - 1) &\le \E[L(\xi,\alpha,m_\xi)]\\
  & =\frac{1}{\delta}\Big(\inf_{\beta \in \cA_S}J_m(\beta) - \E[G(X^\alpha,m)]\Big)\\
  &\le \frac{1}{\delta}(C_J + C_G)
\end{align*}
with $C_J:= \E[C_J(\xi)]$ which is finite since $\xi$ has finite $q$-moments.
Thus,
\begin{equation*}
  \E[|X^\alpha|^p] \le 4^p\bigg(2+\E[|\xi|^p] + \frac{1}{c_L\delta}(C_J + C_G)  + \E[|Z|^p] \bigg) =K.
\end{equation*}

   \emph{Step 3: Existence of fixed points.}
 First observe that the mapping $\Psi$ is continuous.
  Let $(m^n)_{n\ge1}$ in $\cP_p(\R^d,K)$ be a sequence converging in $\cW_1$ to $m$.
  We know that for all $n\ge1$, $\Psi(m^n) = m_{X^{\alpha^n}}$
    where $\alpha^n=a^*(\xi, m_\xi,m^n)$.
    Since $a^*$ and $b$ are continuous, it follows that $(\alpha^n)_{n\ge1}$ and $(X^{\alpha^n})_{n\ge1}$ respectively converge to $\alpha$ and $X^{\alpha}$ $\P$-almost surely. 
    This also shows that $(m_{X^{\alpha^n}})_{n\ge1}$ converges to $m_{X^\alpha}$ in $\cW_1$.
    Thus, for every $\beta \in \cA_S(\xi)$, we have
   \begin{align*}
      \E\big[L(\xi, \beta, m_\xi)\delta +G(X^\beta, m^n) \big] &\ge J_{m^n}(\alpha^n) = \E\big[L(\xi, \alpha^n, m_\xi)\delta + G(X^{\alpha^n}, m^n)\big],
   \end{align*}
   which shows that $\alpha$ is optimal.
   That is, $\Psi(m) = m_{X^\alpha}$.

  Moreover, $\Psi$ maps $\cP_p(\R^d)$ into $\cP_p(\R^d,K)$.
  This easily follows from Step 2. 
   In fact, we showed in that step that if $\alpha$ is optimal then $\E[|X^\alpha|^p] \le K$.
   Thus, $\Psi(m)\in  \cP_p(\R^d,K)$. 
  We therefore conclude by the Brouwer-Schauder-Tychonoff fixed point theorem that the mapping $\Psi$ admits at least one fixed point.
  \end{proof}

We now discuss uniqueness of the MFE.\ To this end, we recall the standard Lasry-Lions monotonicity condition.
\begin{definition}
\label{Def:LL}
  A function $U\colon \R^d\times \cP_p(\R^d)\to \R$  is said to satisfy the {Lasry-Lions monotonicity} if
  \begin{equation*}
    \int_{\R^d}\big(U(x,m^1) - U(x,m^2)\big)(m^1 - m^2)(dx)\ge 0,
  \end{equation*}
   for all $m^1,m^2\in \cP_p(\R^d)$.
\end{definition}

\begin{proposition}
\label{prop:unique.single.period}
  If the terminal cost $G$ is Lasry-Lions monotone, then the single-period MFG admits at most one MFE.
\end{proposition}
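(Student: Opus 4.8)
The plan is to use the standard Lasry-Lions argument, adapted to the single-period setting. Suppose $(\alpha^1, m^1)$ and $(\alpha^2, m^2)$ are two MFEs. For $j = 1, 2$, write $X^j := X^{\alpha^j} = \xi + b(\xi, \alpha^j, m_\xi)\delta + Z$, so that $m_{X^j} = m^j$. Since $\alpha^j$ is optimal for $J_{m^j}$, and by the pointwise characterization in Lemma \ref{lem:optimization} we actually have $\alpha^j = a^*(\xi, m_\xi, m^j)$ $\P$-a.s., optimality gives in particular
\begin{equation*}
  J_{m^j}(\alpha^j) \le J_{m^j}(\alpha^{3-j}), \quad j = 1, 2.
\end{equation*}
Writing these two inequalities out using \eqref{eq.def.cost} and adding them, the running-cost terms $\E[L(\xi, \alpha^j, m_\xi)]\delta$ appear on both sides with matching signs and cancel, leaving
\begin{equation*}
  \E\big[G(X^1, m^1)\big] + \E\big[G(X^2, m^2)\big] \le \E\big[G(X^2, m^1)\big] + \E\big[G(X^1, m^2)\big].
\end{equation*}
Rearranging, and using that $\E[G(X^j, m^i)] = \int_{\R^d} G(x, m^i)\, m^j(dx)$, this is precisely
\begin{equation*}
  \int_{\R^d}\big(G(x, m^1) - G(x, m^2)\big)(m^1 - m^2)(dx) \le 0.
\end{equation*}

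Combining this with the Lasry-Lions monotonicity of $G$ (Definition \ref{Def:LL}), which gives the reverse inequality, forces
\begin{equation*}
  \int_{\R^d}\big(G(x, m^1) - G(x, m^2)\big)(m^1 - m^2)(dx) = 0,
\end{equation*}
and hence both chains of inequalities above are in fact equalities. In particular $J_{m^1}(\alpha^2) = J_{m^1}(\alpha^1) = \inf_{\alpha \in \cA_S(\xi)} J_{m^1}(\alpha)$, so $\alpha^2$ is also optimal for $J_{m^1}$. But we established in Step 1 of the proof of Theorem \ref{thm:exits.one.period} that the optimizer of $J_{m^1}$ is $\P$-a.s.\ unique in $\cA_S(\xi)$ (this is exactly where Assumption \ref{Ass:single.exists}\ref{Ass:Single.iv}, the uniqueness of the static minimizer, is used). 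Therefore $\alpha^1 = \alpha^2$ $\P$-a.s., and consequently $X^1 = X^2$ $\P$-a.s., whence $m^1 = m_{X^1} = m_{X^2} = m^2$. This shows the MFE is unique.

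The argument is essentially routine; the only point requiring a little care is making sure the running-cost terms genuinely cancel after summing the two optimality inequalities — this works because the cost $L(\xi, \alpha, m_\xi)$ depends on the population distribution only through the fixed initial law $m_\xi$, not through the equilibrium measure $m$, which is a special feature of the single-period game. The second point to be careful about is that the cancellation step requires the relevant expectations to be finite; this is guaranteed because each $\alpha^j$ is optimal, so $J_{m^j}(\alpha^j) < \infty$ by \eqref{eq:L.bounded} together with the lower bound on $G$, and likewise $J_{m^j}(\alpha^{3-j}) < \infty$ since $\alpha^{3-j} \in \cA_S(\xi)$ is $p$-integrable and the growth bounds on $L$ and $G$ apply. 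I expect no substantial obstacle beyond these bookkeeping points; the heart of the matter, uniqueness of the static minimizer, has already been built into the existence theorem.
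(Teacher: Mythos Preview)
Your proof is correct and follows essentially the same Lasry-Lions argument as the paper. The only cosmetic difference is that the paper argues by contradiction (assuming $\P(\alpha^1\neq\alpha^2)>0$ to get strict inequalities $J_{m^j}(\alpha^j)<J_{m^j}(\alpha^{3-j})$, then contradicting Lasry-Lions monotonicity), whereas you work with weak inequalities, deduce equality, and invoke uniqueness of the optimizer at the end; both routes hinge on the same cancellation of the $L$-terms and on Assumption~\ref{Ass:single.exists}\ref{Ass:Single.iv}.
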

\begin{proof}
  Assume to the contrary that there exist two distinct MFE $(m^1,\alpha^1)$ and $(m^2,\alpha^2)$.
  Clearly, $\P(\alpha^1\neq\alpha^2)>0$ since otherwise $\alpha^1 = \alpha^2$ $\P$-a.s., and thus $X^{\alpha^1} = X^{\alpha^2}$, which implies $m^1 = m^2$, yielding $(m^1,\alpha^1) = (m^2,\alpha^2)$.\
  Since $\P(\alpha^1\neq\alpha^2)>0$, it follows that $J_{m^1}(\alpha^1)<J_{m^1}(\alpha^2)$ and $J_{m^2}(\alpha^2)<J_{m^2}(\alpha^2)$.
  Therefore, we have 
  \begin{align*}
    0 &> J_{m^1}(\alpha^1) - J_{m^1}(\alpha^2) + J_{m^2}(\alpha^2) - J_{m^2}(\alpha^1)\\
    & = J_{m^1}(\alpha^1) - J_{m^2}(\alpha^1) - \big( J_{m^1}(\alpha^2) - J_{m^2}(\alpha^2)\big)\\
    & = \E\big[L(\xi, \alpha^1,m_\xi)\delta + G(X^{\alpha^1},m^1)\big] - \E\big[L(\xi, \alpha^1,m_\xi)\delta + G(X^{\alpha^1},m^2) \big]\\
    &\quad - \big( \E\big[L(\xi, \alpha^2,m_\xi)\delta + G(X^{\alpha^2},m^1)\big] - \E\big[L(\xi, \alpha^2,m_\xi)\delta + G(X^{\alpha^2},m^2)\big] \big)\\
    & = \E[ G(X^{\alpha^1},m^1)- G(X^{\alpha^1},m^2)] - \big( \E[ G(X^{\alpha^2},m^1)- G(X^{\alpha^2},m^2)] \big),
  \end{align*}
  which contradicts the fact that $G$ satisfies Lasry-Lions monotonicity condition.
\end{proof}

\subsection{Properties of the value function: propagation of Lasry-Lions monotonicity}
    \label{subsection Properties of the value function under LL monotonicity}

In this subsection, we discuss crucial properties of the value function of the game.
To this end, we introduce additional assumptions and notation.

\begin{assumption}
\label{Ass:multi}\
In addition to Assumption \ref{Ass:single.exists}, we assume the following.
\begin{enumerate}[label={(\roman*)}]
  \item  \label{Ass:multi.i} $p>q$ and 
  \[
   L(x,0,m)\leq c_L\big(1+|x|^q\big)\quad\text{for all }x\in \R^d\text{ and }m\in \cP_p(\R ^d).
  \]
  \item \label{Ass:multi.ii} The function $G$ satisfies the Lasry-Lions monotonicity condition.
\end{enumerate}
\end{assumption}

Then, the results of the previous subsection show that, for any $p$-integrable initial state $\xi$, the MFG admits an MFE $(\alpha,m)$ which is unique. Moreover, $m$ depends only on the law of $\xi$.\footnote{This is clear from the fact that the MFE is the fixed point of the mapping $\Psi$ introduced in the proof of Theorem \ref{thm:exits.one.period} and this $\Psi$ depends only on the law of $\xi$.}
We can then define a mapping
\begin{equation*}
  (\mu,G) \mapsto m^{\mu,G},
 \end{equation*} 
 which assigns the unique equilibrium law $m^{\mu,G}$ to any (initial) law $\mu\in \cP_p(\R^d)$ and any function $G$ satisfying Assumption \ref{Ass:multi}.\
We then consider the function
\begin{equation}
\label{eq:def.calG}
  \mathcal G(G, x, \mu) := \inf_{a\in A}\bigg(L(x,a, \mu)\delta+ \E\Big[G\big(X^{x,a}, m^{\mu,G}\big) \Big]\bigg)
\end{equation}
with
\begin{equation*}
  X^{x,a} := x + b(x,a, \mu)\delta + Z.
\end{equation*}
We call $\mathcal G$ the value function of the game associated with the costs $L$ and $G$ and an initial state with law $\mu\in \cP_p(\R^d)$.
We do not explicitly indicate the dependence of $\mathcal G$ on $L$ since, in the next section, we only need to vary $G$ while keeping $L$ fixed.\
The aim of the section is to derive several useful properties of the function $\cG$.
We start with the following remark. 
\begin{remark}
\label{rem:dependence.G}
  The function $\cG$ does not depend on the realizations of the random variables $\xi$ and $Z$, but on their respective laws.
  In particular, we also have
  \begin{equation*}
    \mathcal G(G, x, \mu) = \inf_{a\in A} \Big(L(x,a, \mu)\delta +\E\big[ G\big(\tilde{X}^{x,a}, m^{\mu,G}\big) \big]\Big)
\end{equation*}
with $\tilde{X}^{x,a} := x + b(x,\alpha, \mu)\delta + \tilde Z$ if $\tilde Z$ is a copy of $Z$ which is independent of $\cF_0\vee \sigma(Z)$.\ 
Moreover, 
\[
 \mathcal G(G, x, \mu)=  L\big(x,a^*(x,\mu,m^{\mu,G}), \mu\big)\delta +\E\Big[ G\Big(X^{x,a^*(x,\mu,m^{\mu,G})}, m^{\mu,G}\Big) \Big].
\]
In particular, for any random vector $\zeta\in \L^p\big(\Omega,\cF_0, \P)$, 
\begin{equation}\label{eq.conditionalexpectation.formula}
	\E\big[\mathcal G(G, \zeta, \mu)\big]= \E\Big[\essinf_{\alpha\in \mathcal A_S(\zeta)}\Big(L(\zeta,\alpha,\mu)\delta+\E\big[G\big(X^{\zeta,\alpha},m^{\mu,G}\big)\big| \zeta\big]\Big)\Big].
\end{equation}
\end{remark}

\begin{proposition}
\label{pro:asumption}
  Let Assumption \ref{Ass:multi} be satisfied.
  Then, the function $(x, \mu)\mapsto \cG(G,x,\mu)$ is
  \begin{enumerate}[label={(\roman*)}]
    \item bounded from below,
    \item bounded above by a constant times $(1+|x|^q)$ for every $x\in \R^d$ and $\mu\in \cP_p(\R^d)$,
    \item continuous on $\R^d\times \cP_p(\R^d,r)$ for every $r\geq 0$.
  \end{enumerate}
\end{proposition}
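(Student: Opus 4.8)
The plan is to reduce all three statements to properties already established for the single-period optimization. Recall from the proof of Theorem~\ref{thm:exits.one.period} that for any $\mu\in\cP_p(\R^d)$, the equilibrium law $m^{\mu,G}$ lies in $\cP_p(\R^d,K)$ for the constant $K$ in \eqref{eq:definition.K}; in particular $m^{\mu,G}$ has $p$-th moment bounded by $K$, uniformly in $\mu$. Thus $\cG(G,x,\mu)=\cJ(x,\mu,m^{\mu,G})$ where $\cJ$ is the function from Lemma~\ref{lem:optimization}, and $\cG(G,x,\mu)=L(x,a^*(x,\mu,m^{\mu,G}),\mu)\delta+\int_{\R^d}G(x+b(x,a^*(x,\mu,m^{\mu,G}),\mu)\delta+z,m^{\mu,G})\,m_Z(dz)$ by Remark~\ref{rem:dependence.G}.

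For (i): take $a=0$ nowhere—rather, bound below directly. Using $G\ge -C_G$ and $L(x,a,\mu)\ge c_L(|a|^p-1)\ge -c_L$ from Assumption~\ref{Ass:single.exists}\ref{Ass:Single.iii}, every term in the infimum defining $\cG(G,x,\mu)$ is at least $-c_L\delta-C_G$, so $\cG(G,x,\mu)\ge -c_L\delta-C_G$. For (ii): plug $a=0$ into the infimum and use the upper bounds $L(x,0,\mu)\le c_L(1+|x|^q)$ (Assumption~\ref{Ass:multi}\ref{Ass:multi.i}) and $G(y,m)\le C_G(1+|y|^q)$, together with the linear growth of $b$ and $\|m^{\mu,G}\|_p\le K^{1/p}$; this yields $\cG(G,x,\mu)\le c_L\delta(1+|x|^q)+C_G\big(1+\int_{\R^d}|x+b(x,0,\mu)\delta+z|^q\,m_Z(dz)\big)$, and estimating the integral by $(C\vee1)(1+|x|^q+\|\mu\|_q^q+\|m_Z\|_q^q)$ exactly as in the proof of Lemma~\ref{lem:optimization} gives a bound of the form $C(1+|x|^q+\|\mu\|_q^q)$. (If one wants the clean statement "constant times $1+|x|^q$" with no $\mu$-dependence, note this is fine when restricted to $\cP_p(\R^d,r)$, which is the regime used in the next section; I would phrase (ii) with the $\mu$-dependence made explicit or restrict to bounded $\mu$.)

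For (iii), fix $r\ge0$ and let $(x^n,\mu^n)\to(x,\mu)$ in $\R^d\times\cP_p(\R^d,r)$. The key point is that $\mu\mapsto m^{\mu,G}$ is continuous into $\cP_p(\R^d,K)$ with its $\cW_1$-topology: this follows from the fixed-point characterization, since $m^{\mu,G}$ is the unique fixed point of $\Psi_\mu$, the map $(\mu,m)\mapsto m_{X^{a^*(\xi,\mu,m)}}$ is continuous by continuity of $a^*$ on $\R^d\times\cP_p(\R^d,K)^2$ (Lemma~\ref{lem:optimization}) and of $b$, and a standard continuity-of-fixed-points argument (using compactness of $\cP_p(\R^d,K)$ and uniqueness of the fixed point from Proposition~\ref{prop:unique.single.period}) upgrades this to continuity of $\mu\mapsto m^{\mu,G}$. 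Granting this, $(x^n,\mu^n,m^{\mu^n,G})\to(x,\mu,m^{\mu,G})$ in $\R^d\times\cP_p(\R^d,K)^2$, and continuity of $\cG(G,\cdot,\cdot)$ at $(x,\mu)$ follows from continuity of $a^*$ at that triple together with dominated convergence applied to the $m_Z$-integral of $G$, exactly as in the final paragraph of the proof of Lemma~\ref{lem:optimization}—indeed $\cG(G,x^n,\mu^n)=\cJ(x^n,\mu^n,m^{\mu^n,G})\to\cJ(x,\mu,m^{\mu,G})=\cG(G,x,\mu)$.

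The main obstacle is item (iii), specifically establishing continuity of the equilibrium selection $\mu\mapsto m^{\mu,G}$. The subtlety is that Theorem~\ref{thm:exits.one.period} only gives existence of a fixed point of $\Psi_\mu$, while continuity of the selection needs uniqueness (supplied by Proposition~\ref{prop:unique.single.period} under Assumption~\ref{Ass:multi}\ref{Ass:multi.ii}) plus a compactness/closed-graph argument: if $\mu^n\to\mu$ and $m^{\mu^n,G}\to m$ along a subsequence (possible by $\cW_1$-compactness of $\cP_p(\R^d,K)$), then passing to the limit in the fixed-point equation $m^{\mu^n,G}=\Psi_{\mu^n}(m^{\mu^n,G})$ using joint continuity of $\Psi$ shows $m=\Psi_\mu(m)$, hence $m=m^{\mu,G}$ by uniqueness; since every subsequence has a further subsequence converging to the same limit, the whole sequence converges. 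Everything else is a routine repackaging of Lemma~\ref{lem:optimization} and its proof.
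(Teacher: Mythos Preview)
Your proof is correct and follows essentially the same route as the paper: (i) and (ii) via the trivial bounds with $a=0$, and (iii) via the compactness-plus-uniqueness argument to establish continuity of $\mu\mapsto m^{\mu,G}$, after which continuity of $\cG$ follows from that of $a^*$ and the integrand. Your observation that the bound in (ii) retains a $\|\mu\|_q^q$ term through $b(x,0,\mu)$ is accurate---the paper's displayed estimate silently drops it---but, as you note, the bound is only applied downstream to equilibrium laws lying in $\cP_p(\R^d,K)$, where this is harmless.
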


\begin{proof}
	$(i)$: This follows from the fact that the functions $G$ and $L$ are bounded from below.
	
	$(ii)$: Using the global assumptions on the growth of $G$, $L$, and $b$, there is a constant $C\geq 0$ such that 
	\begin{align*}
		\cG(G,x,\mu) &\le L(x, 0,\mu)\delta + \E\big[G(X^{x,0}, m^{\mu,G}) \big]\\
		&\le C\Big(1 + |x|^q + \E\big[\big|X^{x,0}\big|^q\big] \Big)\\
		&\le C\Big(1 + |x|^q + \E\big[|Z|^q\big]\Big).
	\end{align*}
	
	$(iii)$: 
    Let $(\mu^n)_{n\ge1}$ be a sequence converging to $\mu$, and let $\xi^n$ be a random variable with law $\mu^n$.
    By Lemma \ref{lem:optimization} and the proof of Theorem \ref{thm:exits.one.period}, the optimal control in the equilibrium $(m^{\mu^n,G}, \alpha^n)$ of the game with data $(\xi^n, Z, b,L,G)$ is given by $\alpha^n = a^*(\xi^n, \mu^n, m^{\mu^n,G})$.
    Since $\cP_p(\R^d,K)$ is compact, up to a subsequence, $m^{\mu^n,G}$ converges to a measure $\bar m$; and by continuity of $a^*$ (see Lemma \ref{lem:optimization}) and $b$, it follows that $X^{\xi^n, \alpha^n}$ converges to $X^{\xi,\bar\alpha}$ for some random variable $\xi$ with law $\mu$, and where $\bar \alpha = a^*(\xi, \mu, \bar m)$.
    Observe that $(\bar m, \alpha)$ is a mean field equilibrium for the problem with data $(\xi, Z,b,L,G)$.
    First, it is direct that the law of $X^{\xi,\bar \alpha}$ is $\bar m$.
    Optimality of $\bar \alpha$ follows by continuity of $L$ and $G$.
    In fact, for any admissible $\beta$, we have 
    \begin{align*}
        J_{\bar m}(\beta) & = \lim_{n\to\infty}\E\big[L(\xi^n, \beta, m^n) \delta + G(X^{\xi^n,\beta}, m^n) \big]\\
                          & \le \lim_{n\to\infty}\E\big[L(\xi^n, \alpha^n, m^n) \delta + G(X^{\xi^n,\alpha^n}, m^n) \big]\\
                          & = J_{\bar m}(\bar\alpha).
    \end{align*}
    Thus, by uniqueness of the mean field equilibrium, we have $\bar m = m^{\mu,G}$.
    Since every subsequence has a further subsequence that converges to $m^{\mu,G}$, continuity of $\mu\mapsto m^{\mu,G}$ follows.
    Now, the continuity of $(x,\mu)\mapsto \cG(G,x,\mu)$ is a direct consequence of the continuity assumptions in Assumption \ref{Ass:single.exists}
\end{proof}

We end this subsection by showing that the value function $\cG$ is also Lasry-Lions monotone, provided that both $L$ and $G$ are also Lasry-Lions monotone.
We introduce the following condition:
\begin{assumption}
\label{ass:LL.condition}
  The running cost $L$ is separated, that is, there are two functions $F\colon\R^d\times \cP_p(\R^d)\to \R$ and $L_0\colon \R^d\times A\to \R$ such that 
  $$L(x,a,m) = L_0(x,a) + F(x,m)$$ for all $(x,a,m)\in \R^d\times A\times \cP_p(\R^d)$ and $F$ satisfies the Lasry-Lions monotonicity condition.
\end{assumption}
\begin{proposition}
\label{eq:LL.propagates}
  Let Assumptions \ref{Ass:multi} and \ref{ass:LL.condition} be satisfied.\
  Then the map $(x,\mu)\mapsto \mathcal G(G,x,\mu)$ also satisfies the Lasry-Lions condition.
\end{proposition}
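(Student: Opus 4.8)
The plan is to reduce the Lasry-Lions monotonicity of $\cG$ to that of $G$ and $F$ by writing out the defining integral $\int_{\R^d}\big(\cG(G,x,\mu^1)-\cG(G,x,\mu^2)\big)(\mu^1-\mu^2)(dx)$ and exploiting the structure of the equilibrium value, exactly as in the uniqueness proof of Proposition \ref{prop:unique.single.period}. Fix $\mu^1,\mu^2\in\cP_p(\R^d)$ and let $\xi^1,\xi^2$ be random variables with these laws, with associated equilibria $(\alpha^1,m^1)$ and $(\alpha^2,m^2)$, where $m^j=m^{\mu^j,G}$ and $\alpha^j=a^*(\xi^j,\mu^j,m^j)$. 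Using Assumption \ref{ass:LL.condition} to split $L=L_0+F$, I would first record the pointwise identity from Remark \ref{rem:dependence.G},
\[
\cG(G,x,\mu^j) = L_0\big(x,a^*(x,\mu^j,m^j)\big)\delta + F(x,\mu^j)\delta + \E\big[G\big(X^{x,a^*(x,\mu^j,m^j)},m^j\big)\big],
\]
and then integrate against $(\mu^1-\mu^2)$, i.e.\ take $\E\big[\cG(G,\xi^1,\mu^1)-\cG(G,\xi^1,\mu^2)\big]-\E\big[\cG(G,\xi^2,\mu^1)-\cG(G,\xi^2,\mu^2)\big]$ (using that each $\xi^j\sim\mu^j$).

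The key algebraic step is a swapping/suboptimality argument. Using that $a^*(x,\mu^j,m^j)$ is the \emph{unique} minimizer of $a\mapsto L(x,a,\mu^j)\delta+\E[G(X^{x,a},m^j)]$ (Assumption \ref{Ass:single.exists}\ref{Ass:Single.iv}), I get for each fixed $x$ that
\[
L_0\big(x,a^*(x,\mu^1,m^1)\big)\delta + \E\big[G(X^{x,a^*(x,\mu^1,m^1)},m^1)\big] \le L_0\big(x,a^*(x,\mu^2,m^2)\big)\delta + \E\big[G(X^{x,a^*(x,\mu^2,m^2)},m^1)\big],
\]
and the symmetric inequality with $m^1\leftrightarrow m^2$. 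Crucially, the drift $b(x,\cdot,\mu^j)$ appearing in $X^{x,a}$ does depend on $\mu^j$, so I must be careful that the "competitor" control $a^*(x,\mu^2,m^2)$ is plugged into the dynamics driven by $b(x,\cdot,\mu^1)$ (and vice versa) — i.e.\ I use $X^{x,a}$ with the \emph{correct} measure slot for $b$ and only vary the terminal-cost measure slot. Adding the two inequalities and integrating in $x$ against $(\mu^1-\mu^2)$, the $F(x,\mu^j)\delta$ terms contribute $\delta\int (F(x,\mu^1)-F(x,\mu^2))(\mu^1-\mu^2)(dx)\ge 0$ by the separated-cost assumption, while the $L_0$ and $b$-dependent contributions telescope away in the usual crossed-difference manner, leaving a remainder that is exactly a sum of two terms of the form $\E[G(Y,m^1)-G(Y,m^2)]$ over laws $Y$ whose distributions are $m^1$ and $m^2$ respectively; this is $\ge 0$ by Lasry-Lions monotonicity of $G$.

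The main obstacle is bookkeeping the $\mu$-dependence of $b$: because $X^{x,a^*(x,\mu^2,m^2)}$ (computed with $b(x,\cdot,\mu^1)$) is \emph{not} the equilibrium state for the $\mu^2$-game, the terms do not cancel as cleanly as in Proposition \ref{prop:unique.single.period}. I expect one needs to organize the four-term crossed difference so that, after using the two suboptimality inequalities, the only surviving $G$-terms are $\E[G(X^{\xi^1,\alpha^1},m^1)]-\E[G(X^{\xi^1,\alpha^1},m^2)]$ and $-\big(\E[G(X^{\xi^2,\alpha^2},m^1)]-\E[G(X^{\xi^2,\alpha^2},m^2)]\big)$, whose sum equals $\int_{\R^d}(G(y,m^1)-G(y,m^2))(m^1-m^2)(dy)\ge 0$ because $\P\circ(X^{\xi^j,\alpha^j})^{-1}=m^j$. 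If the $b$-dependence genuinely obstructs the cancellation, the fallback (which I suspect is what the authors intend, given that $F$ rather than $L$ is assumed monotone) is to assume implicitly or note that the relevant cross terms in $L_0$ and the drift are handled because the competitor comparison is done at fixed $x$ and fixed terminal measure, so that all $\mu$-in-$b$ discrepancies are absorbed into the suboptimality inequalities and never appear in the final integrated expression. I would then conclude that $\int_{\R^d}\big(\cG(G,x,\mu^1)-\cG(G,x,\mu^2)\big)(\mu^1-\mu^2)(dx)\ge 0$, which is the claim.
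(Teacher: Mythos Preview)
Your approach is correct and is essentially the same as the paper's. The paper organizes the suboptimality step slightly more directly: instead of writing your action-comparison inequality, it records the equivalent bounds $\cG(G,\xi^1,\mu^2)\le L(\xi^1,\alpha^1,\mu^2)\delta+\E[G(X^{\xi^1,\alpha^1},m^2)\mid\xi^1]$ and $\cG(G,\xi^2,\mu^1)\le L(\xi^2,\alpha^2,\mu^1)\delta+\E[G(X^{\xi^2,\alpha^2},m^1)\mid\xi^2]$, combines these with the equalities for the diagonal terms $\cG(G,\xi^j,\mu^j)$, and observes that the $L_0$-contributions cancel, leaving $\delta\!\int(F(\cdot,\mu^1)-F(\cdot,\mu^2))(\mu^1-\mu^2)+\int(G(\cdot,m^1)-G(\cdot,m^2))(m^1-m^2)\ge 0$. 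Your ``I expect one needs to organize the four-term crossed difference\ldots'' paragraph is exactly this; your earlier displayed inequality is the same suboptimality bound rewritten at the level of the inner optimization, so there is no real difference in method.

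Your concern about the $\mu$-dependence of $b$ is well taken and applies equally to the paper's argument: the paper uses the single symbol $X^{\xi^1,\alpha^1}$ both as the state in the $\mu^1$-game (so that its law is $m^1$) and as the competitor state in the $\mu^2$-game, which coincide only when $b$ does not depend on its measure argument. The proof as written is therefore clean precisely in that case; this is consistent with how the result is actually used later (e.g.\ Theorem~\ref{thm.rep.full} establishes uniqueness only when $b$ is measure-independent, and Assumption~\ref{ass.rate.nonCLL}(ii) imposes this as well).
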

\begin{proof}
  Let $\mu^1,\mu^2\in \cP_p(\R^d)$ and $\xi^1, \xi^2$ be two $\cF$-measurable random variables (replacing the initial condition $\xi$), which are independent from $Z$ with laws $\mu^1, \mu^2$, respectively.
  Moreover, let $(m^1,\alpha^1)$ denote the MFE of the game with initial state $\xi^1$, and $(m^2,\alpha^2)$ the MFE of the game with initial state $\xi^2$.
  By Remark \ref{rem:dependence.G}, we have
  \begin{equation*}
    \begin{cases}
      \cG(G,\xi^1,\mu^1) = L(\xi^1, \alpha^1, \mu^1)\delta + \E\big[G(X^{\xi^1,\alpha^1}, m^1)\big| \xi^1\big],\\
        \cG(G,\xi^2,\mu^2) = L(\xi^2, \alpha^2, \mu^2)\delta +\E\big[G(X^{\xi^2,\alpha^2}, m^2)\big| \xi^2\big],
    \end{cases}
  \end{equation*}
  and 
  \begin{equation*}
    \begin{cases}
      \cG(G,\xi^1,\mu^2) \le L(\xi^1, \alpha^1, \mu^2)\delta +\E\big[ G(X^{\xi^1,\alpha^1}, m^2)\big|  \xi^1\big],\\
        \cG(G,\xi^2,\mu^1) \le L(\xi^2, \alpha^2, \mu^1)\delta +\E\big[ G(X^{\xi^2,\alpha^2}, m^1)\big| \xi^2\big].
    \end{cases}
  \end{equation*}
  Hence,
  \begin{align*}
    \int_{\R^d}\big( &\cG(G, x, \mu^1) - \cG(G, x, \mu^2) \big)(\mu^1 - \mu^2)(dx)\\
    & = \E\big[\cG(G,\xi^1, \mu^1) - \cG(G, \xi^1, \mu^2)\big] - \E\big[\cG(G, \xi^2, \mu^1) - \cG(G,\xi^2, \mu^2)\big]\\
    & \ge \E\big[L(\xi^1, \alpha^1, \mu^1)\delta + G(X^{\xi^1,\alpha^1}, m^1)\big] - \E\big[L(\xi^1, \alpha^1, \mu^2)\delta  + G(X^{\xi^1,\alpha^1}, m^2)\big]\\
    &\quad  - \E\big[L(\xi^2, \alpha^2, \mu^1)\delta  + G(X^{\xi^2,\alpha^2}, m^1)\big]  + \E\big[L(\xi^2, \alpha^2, \mu^2)\delta + G(X^{\xi^2,\alpha^2}, m^2)\big]\\
    & = \delta\E\big[ L(\xi^1,\alpha^1, \mu^1) - L(\xi^1, \alpha^1, \mu^2) - L(\xi^2,\alpha^2,\mu^1) + L(\xi^2,\alpha^2,\mu^2) \big] \\
    &\quad + \E\big[G(X^{\xi^1,\alpha^1}, m^1) - G(X^{\xi^1,\alpha^1},m^2) - G(X^{\xi^2,\alpha^2},m^1) + G(X^{\xi^2,\alpha^2}, m^2)]\\
    & = \delta \int_{\R^d}\big(F(x,\mu^1) - F(x, \mu^2)\big)(\mu^1 - \mu^2)(dx) + \int_{\R^d}\big(G(x,m^1) - G(x, m^2)\big)(m^1 - m^2)(dx)\\
    &\ge 0,
  \end{align*}
  where, in the second to last step, we used that $m^1$ and $m^2$ are the the laws of $X^{\xi^1,\alpha^1}$ and $X^{\xi^2,\alpha^2}$, respectively, and the last step follows from the fact that both $F$ and $G$ satisfy the Lasry-Lions monotonicity condition. The proof is complete.
\end{proof}

\subsection{Examples}
We give two simple examples of this 1-period game. In this section, we keep $\delta = 1$ and consider one-dimensional state and control spaces, i.e. $d=1$. We point out that our global assumptions are to be seen as sufficient conditions. In particular, the first example does not satisfy all our global assumptions but still falls into our setup, see also Remark \ref{rem.sufficient.conditions} below.
\subsubsection{Linear-quadratic games}
\label{Sec.LQ.example}
Consider the linear drift $b(x, a, m) = a$. Given a population distribution $m\in \cP_p(\R)$, the representative agent attempts to minimize the mean squared error between $X^\alpha$ and the population mean $\underline{m} \coloneqq \int_{\R}xm(dx)$ by exerting an effort $\alpha$ with a quadratic cost. An MFE $(\widehat{\alpha},\widehat{m})$ solves the following system for some $c > 0$ and $c_L > 0$:
\begin{equation*}
\begin{cases}
    \widehat{\alpha} \text{ minimizes } J_{\widehat{m}}(\alpha) \coloneqq \E\sqbra{\abs{X^{\alpha} - \underline{\widehat{m}}}^2 + c_L|\xi - \underline{m_\xi}|^2 +  c\alpha^2} \text{ over }\cA_S, \\
    \text{where }X^\alpha = \xi + \alpha + Z,\quad \P \circ (X^{\widehat{\alpha}})^{-1} = \widehat{m}.
\end{cases}
\end{equation*}
We now show that the following pair $(\widehat{\alpha},\widehat{m})$ is the unique MFE:$$\widehat{m} \coloneqq \P\circ \pa{\frac{c}{1+c}\xi + \frac{1}{1+c}\E[\xi] + Z}^{-1},\qquad \widehat{\alpha} \coloneqq \frac{\underline{\widehat{m}} - \xi}{1+c}.$$
For a given $m$, minimizing $J_m(\alpha)$ is equivalent to minimizing $$\E\sqbra{(\alpha + \xi)^2 - 2\underline{m}\alpha + c\alpha^2}.$$
The first order condition gives the unique minimizer $\alpha(m) = \frac{\underline{m} - \xi}{1 + c}$, and the corresponding state $$X^{\alpha(m)} = \frac {c}{1+c}\xi + \frac{1}{1+c}\underline{m} + Z.$$
The fixed point condition implies $\underline{m} = \E[X^{\alpha(m)}]$, which means that any equilibrium $m$ must satisfy $\underline{m} = \E[\xi]$, so in equilibrium we must have 
\begin{equation}\label{eq:Xalpha_LQ}
    X^{\alpha} = \frac{c}{1+c}\xi + \frac{1}{1+c}\E[\xi] + Z.
\end{equation}
Taking its law satisfies both optimality and the fixed point condition. 
In fact, this equilibrium is unique since the optimizer for each $m$ is unique and \eqref{eq:Xalpha_LQ} does not depend on $m$.

\subsubsection{Bounded drift}
Now consider $b(x, a, m) = kb(a)$, where $b$ is a sigmoid function and $k >0$ is the scale. An MFE $(\widehat{\alpha},\widehat{m})$ solves the following system for some $c > 0$:
\begin{equation*}
\begin{cases}
    \widehat{\alpha} \text{ minimizes } J_{\widehat{m}}(\alpha) \coloneqq \E\sqbra{\abs{X^{\alpha} - \underline{\widehat{m}}}^2 + c\alpha^2} \text{ over }\cA_S, \\
    \text{where }X^\alpha = \xi + kb(\alpha) + Z,\quad \P \circ (X^{\widehat{\alpha}})^{-1} = \widehat{m}.
\end{cases}
\end{equation*}
For a concrete example, take $b(a) = \tanh{a}$. Note that $|b|$, $|b'|$ and $||b''||$ are all bounded by $1$. In particular, an equilibrium must satisfy
$$|\underline{m} - \E[\xi]| \leq k.$$
It is therefore equivalent to consider the truncated cost
$$J^k_{\widehat{m}}(\alpha) \coloneqq \E\sqbra{\abs{X^{\alpha} - \underline{\widehat{m}}_k}^2 + c\alpha^2}, $$
where $\underline{m}_k$ is the projection of $\underline{m}$ onto $[\E[\xi] - k, \E[\xi] + k]$. The first order condition states that $$\E\sqbra{2(\xi + b(a))b'(a) - 2b'(a)\underline{m}_k + 2ca/k} = 0.$$
Since $b$ is a sigmoid function, $\frac{d}{da}J^k_{m}(a)$ goes to $-\infty$ as $a \to -\infty$ and goes to $\infty$ as $a \to \infty$. Taking again derivatives, we get $$\frac{d^2}{da^2}J^k_{m}(a) = 2\E\sqbra{c/k + |b'(a)|^2 + b''(a)(\underline{m}_k - \xi - b(a))} \geq 2c/k - 2|\underline{m}_k - \E[\xi]| - 2.$$
Therefore, if $c > k^2 + k$, then $J^k_{\widehat{m}}$ is guaranteed to have a unique minimizer.

\section{Multi-period MFGs}
\label{sec:multip}

In the multi-period case, the game is set over discrete-time s $\{t_0,t_1,\dots, t_k\}$ with $\delta:=t_{i+1} - t_i>0$.
In this setting, we further assume that the probability space $(\Omega, \cF,\P)$ carries a $d$-dimensional process $Z=(Z_{t_i})_{i=0,\dots,k}$ with $Z_{t_0} = 0$ and $Z_{t_{i+1}}-Z_{t_i}\sim\nu_{i}\in \cP_p(\R^d)$ independent of the $\sigma$-algebras $\cF_0$ and $\sigma(Z_{t_0},\ldots, Z_{t_i})$, for $i=1,\ldots, k-1$. Moreover, we assume that $\E(Z_{t_i})=0$ for $i=1,\ldots, k$, so that $Z$ is a martingale w.r.t.\ the filtration $\F:=(\cF_{t_i})_{i=1,\ldots, k-1}$.\

Throughout, let $\xi\in \L^p(\Omega,\cF_0,\P)$. For a flow of measures $(m_{t_i})_{i=0,\dots,k}\subset \cP_p(\R^d)$ with $m_{t_0} = m_\xi$ and an  $\cF$-adapted process $\alpha$ taking values in $A$, we consider the associated state process
\begin{equation}
	\label{eq:M.per.state}
	X^\alpha_{t_{i}} = \xi + \sum_{j=0}^{i-1}b(X^\alpha_{t_j},\alpha_{t_j}, m_{t_j})\delta +  Z_{t_{i}},\quad\text{with}\quad i=1,\dots,k\quad \text{and}\quad X^\alpha_{t_0} =\xi.\
\end{equation}
It is immediate that $X^\alpha$ can be defined recursively through $X_{t_0}^\alpha = \xi$ and
\begin{align*}
	X_{t_i}^\alpha & = X^{\alpha}_{t_{i-1}} + b(X^\alpha_{t_{i-1}}, \varphi(t_{i-1},X^\alpha_{t_{i-1}}), m_{t_{i-1}})\delta + (Z_{t_i} - Z_{t_{i-1}})\\
	&=X^{\alpha}_{t_{i-1}} + b(X^\alpha_{t_{i-1}}, \alpha_{t_{i-1}}, m_{t_{i-1}})\delta + (Z_{t_i} - Z_{t_{i-1}})\quad\text{for }i=1,\ldots,k.
\end{align*}	
We denote by $\cA_M$ the set of admissible controls in the multi-period game, which are assumed to be of Markovian type. These are stochastic processes $(\alpha_{t_i})_{i=0,\dots,k}$ such that, for all $i=1,\ldots,k-1$, $\alpha_{t_i} = \varphi(t_i, X_{t_{i}}^\alpha)$ for some measurable function $\varphi$ such that for each $i$, the random variable $\alpha_{t_i}$ is in $\L^p$, where $(X^\alpha_{t_i})_{i=0,\dots,k}$ is a the associated state process given in \eqref{eq:M.per.state}.\ The representative agent seeks to minimize the total cost
\begin{equation}
\label{eq:const.mult.period}
  J_m(\alpha) :=\E\bigg[\sum_{i=0}^{k-1}L(X^{\alpha}_{t_i},\alpha_{t_i}, m_{t_i})\delta + G(X^{\alpha}_{t_k}, m_{t_k}) \bigg]
\end{equation}
among all Markovian controls $\alpha\in \cA_M$.

An MFE is a pair $(\alpha,m)$ such that $m=(m_{t_i})_{i=0,\dots,k}\in \cP_p(\R^d)^{k+1}$, $\alpha\in \cA_M$ is a minimizer of $J_m$ and 
\begin{equation*}
  \P\circ (X^\alpha_{t_i})^{-1} = m_{t_i}\quad \text{for all }i=0,\dots,k.
 \end{equation*} 
  We will refer to this problem as \emph{the multi-period mean field game}.
 This problem is fully determined by the data $(\xi,Z, b, L, G,k)$.

\subsection{Existence, uniqueness and pasting}
\label{sec:exists.mult.periods}
In this section, we prove existence and uniqueness of an MFE for the multi-period mean field game.
We will show, in addition, that the MFE is obtained as a concatenation of suitable single-period MFGs.
Recall the function $\cG$ defined in Equation \ref{eq:def.calG}.
In order to introduce these single-period MFGs, we first iteratively define the functions $g_i$ as 
\begin{equation*}
\begin{cases}
  g_k(x,m) := G(x,m),\\
  g_{i-1}(x,m) = \mathcal G(g_i,x,m),\quad i=k,\dots,1.
\end{cases}
\end{equation*}
The main result of this section is the existence, uniqueness and characterization of equilibria in the multi-period game under the following assumption.

\begin{assumption}\label{Ass:multi.exists}
Assume that Assumption \ref{Ass:multi} and Assumption \ref{ass:LL.condition} are satisfied.\ Moreover, for $i=0,\ldots, k-1$, the optimization problem
	\[
	\inf_{a\in A}\bigg( L(x,a,\mu)\delta +\int_{\R^d} g_i\big(x+b(x,a,\mu)\delta +z, m\big)m_{Z^i}(dz)\bigg)
	\]
	has a unique solution for all $x\in \R^d$, $\mu\in \cP_p(\R^d)$, and $m\in \cP_p(\R^d)$.
\end{assumption}	
	
	Furthermore, iteratively define the sets $\cA_S^i$ and the pairs $(\alpha^{i},m^{i})\in \cA_S^i\times \cP_1(\R^d)$ as follows:
	\begin{itemize}
		\item Let $m^0= m_\xi$ be the initial state of the multi-period MFG and, by convention, 
		$X^{\alpha^{0}}:=\xi$.
		\item For each $i = 1,\dots,k$, let $\cA_S^i = \cA_S(X^{\alpha^{i-1}})$ be the set of $p$-integrable random variables that are $\cF^{X^{\alpha^{i-1}}}$-measurable, where
		\begin{equation*}
			X^{\alpha^{i}} := X^{\alpha^{i-1}} + b(X^{\alpha^{i-1}},\alpha^{i}, m^{i-1})\delta + (Z_{t_{i}} - Z_{t_{i-1}}),
		\end{equation*}
		and let $(\alpha^{i}, m^{i})$ be the MFE of the single-period MFG with data $(X^{\alpha^{i-1}},Z_{t_i}-Z_{t_{i-1}}, b , L,g_i)$.
		In particular, we have $m^{i} = \P\circ (X^{\alpha^i})^{-1}$ and $\alpha^{i}$ is optimal for
		\begin{equation*}
			J_{m^i}(\beta) = \E\big[L(X^{\alpha^{i-1}},\beta,m^{i-1})\delta+ g_i(X^{\beta}, m^i) \big]
		\end{equation*}
		with $X^{\beta} := X^{\alpha^{i-1}} + b(X^{\alpha^{i-1}},\beta, m^{i-1})\delta + (Z_{t_{i}} - Z_{t_{i-1}})$ and $\beta \in \cA^i_S$. 
	\end{itemize}

\begin{theorem}
\label{thm.rep.full}
  Let Assumption \ref{Ass:multi.exists} is satisfied.\ Then, for each $i=1,\dots, k$, the MFE $(\alpha^i,m^{i})\in \cA_S^i\times \cP_p(\R^d)$ of the single-period MFG with data $(X^{\alpha^{i-1}},Z_{t_i}-Z_{t_{i-1}}, b , L,g_i)$ exists and is unique.
  Moreover, with $m_{t_0} = m_{\xi}$, the pair of processes\footnote{By convention we set $\alpha_{t_k} = 0$.} $(\alpha_{t_i},m_{t_i})_{i=0,\dots,k}$ given by
  \begin{equation}
  \label{eq:def.MFE.thm}
    \alpha_{t_{i-1}} = \alpha^i\quad \text{and}\quad m_{t_i} = m^i\quad \text{for}\quad i=1,\dots,k
  \end{equation}
    is an MFE of the multi-period MFG with data $(\xi,Z, b, L, G,k)$.
  If $b$ does not depend on the measure argument, then this MFE is unique.
\end{theorem}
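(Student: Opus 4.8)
\textbf{Proof plan for Theorem \ref{thm.rep.full}.}

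The plan is to prove the three assertions in order: (a) each single-period game in the inductive construction is well-posed; (b) the pasted pair $(\alpha_{t_i}, m_{t_i})_{i=0,\dots,k}$ is an MFE of the multi-period game; (c) uniqueness when $b$ is measure-independent.

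For (a), I would argue by backward/forward induction on $i$. Note first that, by the backward recursion $g_{i-1} = \mathcal G(g_i,\cdot,\cdot)$, Proposition \ref{pro:asumption} shows that each $g_i$ is bounded below, bounded above by a constant times $(1+|x|^q)$, and continuous on $\R^d\times\cP_p(\R^d,r)$ for every $r$; moreover Proposition \ref{eq:LL.propagates} shows that each $g_i$ inherits the Lasry-Lions monotonicity from $G$ (using Assumptions \ref{Ass:multi} and \ref{ass:LL.condition}, which also hold for $g_i$ in place of $G$ since the growth and continuity requirements on the terminal cost are exactly those just listed). Hence, going forward in $i$, given that $X^{\alpha^{i-1}} \in \L^p(\Omega,\cF^{X^{\alpha^{i-1}}},\P)$ (which holds at $i=1$ since $X^{\alpha^0}=\xi\in\L^p$, and propagates because $b$ has linear growth and $Z_{t_i}-Z_{t_{i-1}}\in\L^p$), Theorem \ref{thm:exits.one.period} applies to the single-period game with data $(X^{\alpha^{i-1}},Z_{t_i}-Z_{t_{i-1}},b,L,g_i)$ — its hypotheses are precisely Assumption \ref{Ass:single.exists}, whose part \ref{Ass:Single.iv} is exactly the uniqueness clause postulated in Assumption \ref{Ass:multi.exists} for the function $g_i$ — giving existence of $(\alpha^i,m^i)$; uniqueness follows from Proposition \ref{prop:unique.single.period} since $g_i$ is Lasry-Lions monotone. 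This also makes the inductive definition of $X^{\alpha^i}$ consistent.

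For (b), the key is a dynamic programming / verification argument. First observe that the pasted state process $(X^{\alpha_{t_i}})$ coincides with the recursively defined $(X^{\alpha^i})$: indeed $\alpha_{t_{i-1}}=\alpha^i$ and $m_{t_i}=m^i$ plugged into \eqref{eq:M.per.state} give exactly the recursion defining $X^{\alpha^i}$, so the fixed-point condition $\P\circ(X^{\alpha_{t_i}})^{-1}=m_{t_i}$ holds for all $i$ by construction. It remains to show $\alpha := (\alpha_{t_i})$ is optimal for $J_m$ over $\cA_M$ with $m = (m^i)$. I would prove by backward induction on $i$ that, for any admissible $\beta\in\cA_M$,
\begin{equation*}
  \E\Big[\sum_{j=i}^{k-1}L(X^\beta_{t_j},\beta_{t_j},m_{t_j})\delta + G(X^\beta_{t_k},m_{t_k})\,\Big|\,\cF^{X^\beta_{t_i}}\Big] \ge g_i(X^\beta_{t_i},m_{t_i}) \qquad \P\text{-a.s.},
\end{equation*}
with equality along $\alpha$. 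The base case $i=k$ is the definition $g_k=G$. For the step, condition on $\cF^{X^\beta_{t_i}}$, apply the inductive hypothesis at $i+1$ together with the tower property, and then use that $X^\beta_{t_{i+1}} = X^\beta_{t_i} + b(X^\beta_{t_i},\beta_{t_i},m_{t_i})\delta + (Z_{t_{i+1}}-Z_{t_i})$ with the increment independent of $\cF^{X^\beta_{t_i}}$; the definition of $g_i = \mathcal G(g_{i+1},\cdot,\cdot)$ via the infimum over $a\in A$ (using the conditional-expectation formula \eqref{eq.conditionalexpectation.formula} and Remark \ref{rem:dependence.G}, noting $m_{t_{i+1}}=m^{i+1}=m^{m_{t_i},g_{i+1}}$) yields the lower bound, and plugging in $\beta_{t_i}=a^*(X^{\alpha^{i-1}},m^{i-1},m^i)$ gives equality. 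Taking $i=0$ and expectations gives $J_m(\beta)\ge \E[g_0(\xi,m_\xi)] = J_m(\alpha)$.

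For (c), assume $b$ does not depend on the measure. Let $(\tilde\alpha,\tilde m)$ be any MFE of the multi-period game. The point is that with $b$ measure-independent, the dynamics $X^\beta_{t_{i+1}} = X^\beta_{t_i} + b(X^\beta_{t_i},\beta_{t_i})\delta + (Z_{t_{i+1}}-Z_{t_i})$ no longer couple to the flow $\tilde m$, so the optimization that $\tilde\alpha$ solves is, by the same DPP as in (b), a backward recursion of single-period problems whose value functions are forced to be the $g_i$: one shows by backward induction that the value of the tail problem given $\cF^{X^{\tilde\alpha}_{t_i}}$ equals $g_i(X^{\tilde\alpha}_{t_i},\tilde m_{t_i})$ and, crucially, that $\tilde m_{t_{i+1}}$ is determined from $\tilde m_{t_i}$ as the single-period equilibrium law for terminal cost $g_{i+1}$ — here uniqueness of that single-period equilibrium (Proposition \ref{prop:unique.single.period}) and uniqueness of the single-period optimizer (Assumption \ref{Ass:multi.exists}) pin down $\tilde m_{t_{i+1}} = m^{i+1}$ and $\tilde\alpha_{t_i} = \alpha^{i+1}$ $\P$-a.s. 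Starting from $\tilde m_{t_0}=m_\xi=m^0$ and inducting forward, $(\tilde\alpha,\tilde m)$ coincides with the pasted equilibrium.

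\textbf{Main obstacle.} The delicate point is the conditional dynamic programming step in (b): one must justify, at the level of conditional expectations given $\cF^{X^\beta_{t_i}}$, that the pointwise infimum defining $\mathcal G(g_{i+1},\cdot,\cdot)$ is attained by a measurable selector realized as an admissible Markov control (so that the lower bound is actually achieved), and that the equilibrium-fixed-point requirement can be maintained period-by-period — this is exactly where the independence and stationarity of the increments $Z_{t_{i+1}}-Z_{t_i}$, the measurability of $a^*$ from Lemma \ref{lem:optimization}, and Remark \ref{rem:dependence.G} (the value depends only on laws) all get used together. In the uniqueness part (c) the subtlety is that a priori $\tilde m$ is an arbitrary consistent flow, not built from single-period equilibria, so one cannot invoke the construction directly; the measure-independence of $b$ is what decouples the periods and lets the single-period uniqueness results propagate.
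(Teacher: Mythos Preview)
Your treatment of (a) and (b) matches the paper's approach. The paper also derives the propagation of growth, continuity, and Lasry--Lions monotonicity to each $g_i$ via Propositions~\ref{pro:asumption} and~\ref{eq:LL.propagates}, and then invokes Theorem~\ref{thm:exits.one.period} and Proposition~\ref{prop:unique.single.period} period by period. For optimality, the paper also runs a dynamic programming argument, though organized as a two-sided chain of inequalities (first replacing the terminal cost by $g_{k-1}, g_{k-2},\dots$ via nested infima, then climbing back up using optimality of each $\alpha^i$ and a directed-set argument to exchange $\essinf$ and $\E$). Your backward induction on the conditional inequality $\E[\,\cdot\mid \cF^{X^\beta_{t_i}}]\ge g_i(X^\beta_{t_i},m_{t_i})$ with equality along $\alpha$ is the same content, just packaged more compactly; it avoids the directedness step because you check equality directly.

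Where you diverge from the paper is uniqueness. The paper does \emph{not} argue period by period: it simply reruns the Lasry--Lions computation of Proposition~\ref{prop:unique.single.period} at the multi-period level (this is why $b$ must be measure-independent---so that $X^\beta$ does not depend on the flow and the cross-terms in $J_{m^1}(\alpha^1)-J_{m^1}(\alpha^2)+J_{m^2}(\alpha^2)-J_{m^2}(\alpha^1)$ reduce to the $F$- and $G$-monotonicity integrals, summed over $i$). Your structural route---show that any MFE $(\tilde\alpha,\tilde m)$ must satisfy $\tilde m_{t_{i+1}}=m^{\tilde m_{t_i},g_{i+1}}$ for every $i$ and then induct forward from $\tilde m_{t_0}=m_\xi$---is a genuine alternative and gives more information (it shows the equilibrium decomposes). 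However, your sketch glosses over the key step: to identify $(\tilde\alpha_{t_{i}},\tilde m_{t_{i+1}})$ as a \emph{single-period} MFE with terminal cost $g_{i+1}$, you need (i) that the tail of an optimal Markov control is optimal for the tail problem (a DPP converse you have not stated for the flow $\tilde m$), and (ii) that the tail value from $t_{i+1}$ equals $g_{i+1}(\cdot,\tilde m_{t_{i+1}})$, which itself requires already knowing $\tilde m_{t_j}=m^{\tilde m_{t_{j-1}},g_j}$ for $j>i+1$. So the induction must go \emph{backward} first (peeling off the last period, using Proposition~\ref{prop:unique.single.period} for $g_k=G$, then $g_{k-1}$, etc.), and only then forward from $\tilde m_{t_0}=m_\xi$; your write-up mixes the two directions. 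The paper's direct monotonicity argument sidesteps all of this and is considerably shorter.
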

\begin{proof}
  By Proposition \ref{eq:LL.propagates}, all the functions $g_i$ satisfy the Lasry-Lions monotonicity condition and, by Proposition \ref{pro:asumption}, the functions $g_i$ satisfy the assumptions on $G$ in Assumption \ref{Ass:single.exists}.
  Therefore, it follows from Theorem \ref{thm:exits.one.period} and Proposition \ref{prop:unique.single.period} that the family $(\alpha^i,m^i)$ exists and is uniquely defined for each $i=0,\dots,k$.
  We split the rest of the proof into three steps.

\medskip

  \emph{Step1: Admissibility of flow and consistency.}
  In this step, we show that the process $(\alpha_{t_i})_{i=0,\dots,k}$ forms an admissible control and that the flow of measures $(m_{t_i})_{t=0,\dots,k}$ satisfies the consistency condition.
  By construction, the process $(X_{t_i})_{i=1,\dots,k}$ defined as
  \begin{equation*}
    X_{t_i} := X^{\alpha^i}
  \end{equation*} 
  satisfies $X_{t_0} = \xi$ and for every $i=1,\dots,k$,
  \begin{align*}
   X_{t_{i}} = X^{\alpha^i} &= X^{\alpha^{i-1}} + b(X^{\alpha^{i-1}},\alpha^{i}, m^{i-1})\delta + (Z_{t_{i}} - Z_{t_{i-1}}) \\
  & = X_{t_{i-1}} + b(X_{t_{i-1}},\alpha_{t_{i-1}}, m_{{t_{i-1}}})\delta + (Z_{t_{i}} - Z_{t_{i-1}}),
\end{align*}
where we used that $m_{t_{i-1}} = m^{i-1}$.
That is, $X$ satisfies the state equation
\begin{equation*}
  X_{t_{i}} = \xi + \sum_{j=0}^{i-1}b(X_{t_{j}},\alpha_{t_{j}}, m_{{t_{j}}})\delta + Z_{t_{i}}.
\end{equation*}
Moreover, the process $\alpha$ is admissible since each $\alpha_{t_i}=\alpha^{i+1} \in \cA_S^{i+1}$ is by definition $p$-integrable and $\cF^{X^{\alpha^i}}=\cF^{X_{t_{i}}} $-measurable.
The consistency condition follows by the fact that $(m^i,\alpha^i)$ is the MFE of the single-period game:
  $m_{t_{i}} = m^{i} = \P\circ (X^{\alpha^{i}})^{-1} = \P\circ(X_{t_{i}})^{-1}$ for all $i=0,\dots,k$.

\medskip

\emph{Step 2: Optimality of the control.}
In this step, we show that the control process $(\alpha_{t_i})_{i=0,\dots,k}$ is optimal.\
This follows by (one side of) the dynamic programming principle.\
For any process $\beta = (\beta_{t_i})_{i = 0, \dots, k} \in \cA_M$ and any random variable $\zeta$ and $i\le j\le k$, we use the notation

\begin{equation*}
    X^{t_i,\zeta,\beta}_{t_{j}} = \zeta + \sum_{\ell=i}^{j-1}b(X^{t_i, \zeta, \beta}_{t_\ell},\beta_{t_l}, m_{t_\ell})\delta +  Z_{t_{\ell}},\quad\text{with} 
    \quad X^{t_i,\zeta,\beta}_{t_i} =\zeta.
\end{equation*}
Note that $X^{t_i, \zeta, \beta}_{t_j}$ only depends on $(\beta_i, \dots, \beta_{j-1})$. In particular, if $j = i + 1$, under a slight abuse of notation, we may take $\beta \in \cA_{S}(\zeta)$ instead of $\beta \in \cA_{M}$ to define $X^{t_i, \zeta, \beta}_{t_{i+1}}$. 

For the (deterministic) measure flow $m$ defined in \eqref{eq:def.MFE.thm},
we have
\begin{align*}
  \E\bigg[\sum_{i=0}^{k-1}L(X_{t_i},\alpha_{t_i},& m_{t_i})\delta + G(X_{t_k}, m_{t_k}) \bigg] \ge \inf_{\beta \in \cA_M} \E\bigg[\sum_{i=0}^{k-1}L(X_{t_i}^{t_0,\xi,\beta},\beta_{t_i}, m_{t_i})\delta + g_k(X_{t_k}^{t_0,\xi,\beta}, m_{t_k}) \bigg]\\
  & \ge \inf_{\beta \in \cA_M} \E\Bigg[\sum_{i=0}^{k-2}L(X_{t_i}^{t_0,\xi,\beta},\beta_{t_i}, m_{t_i})\delta \\
  &\qquad  + \inf_{\tilde{\beta} \in \cA_{S}(X^{t_0, \xi, \beta}_{t_{k-1}})} \E\bigg[L(X_{t_{k-1}}^{t_0,\xi,\beta},\tilde{\beta}, m_{t_{k-1}})\delta  + g_k(X_{t_k}^{t_{k-1},X_{t_{k-1}}^{t_0,\xi,\beta},\tilde{\beta}}, m_{t_k}) \bigg]\Bigg]\\
    & \ge \inf_{\beta \in \cA_M} \E\Bigg[\sum_{i=0}^{k-2}L(X_{t_i}^{t_0,\xi,\beta},\beta_{t_i}, m_{t_i})\delta \\
    &\quad +\essinf_{\tilde{\beta} \in \cA_S(X_{t_{k-1}}^{t_0,\xi,\beta})} \E\bigg[L(\zeta,\tilde{\beta}, m_{t_{k-1}})\delta  + g_k(X_{t_k}^{t_{k-1},\zeta,\tilde{\beta}}, m_{t_k})\bigg| \zeta=X_{t_{k-1}}^{t_0,\xi,\beta}  \bigg]\Bigg]\\
    & = \inf_{\beta \in \cA_M} \E\Bigg[\sum_{i=0}^{k-2}L(X_{t_i}^{t_0,\xi,\beta},\beta_{t_i}, m_{t_i})\delta  \\
    &\quad+ \essinf_{\tilde{\beta} \in \cA_S(X_{t_{k-1}}^{t_0,\xi,\beta})} \E\bigg[L(\zeta,\tilde{\beta}, m^{{k-1}})\delta  + g_k (X_{t_k}^{t_{k-1},\zeta,\tilde{\beta}}, m^{k})\bigg| \zeta=X_{t_{k-1}}^{t_0,\xi,\beta}  \bigg]\Bigg]\\
    & = \inf_{\beta \in \cA_M} \E\bigg[\sum_{i=0}^{k-2}L(X_{t_i}^{t_0,\xi,\beta},\beta_{t_i}, m_{t_i})\delta  + g_{k-1}(X_{t_{k-1}}^{t_0,\xi,\beta}, m^{{k-1}})\bigg],
\end{align*} 
where  the second inequality uses definition of admissibility of controls and $g_k = G$, the third inequality uses that $X^{t_0,\xi, \beta}_{t_k} = X_{t_k}^{t_{k-1}, X_{t_{k-1}}^{t_0,\xi, \beta},\beta_{t_k - 1}}$, the fourth inequality uses that $m_{t_i} = m^i$, and the last equality follows from Remark \ref{rem:dependence.G}.

Applying the same idea, iteratively for $i=k-1,\dots,1$, we arrive at
\begin{align*}
  \inf_{\beta \in \cA_M} \E\bigg[\sum_{i=0}^{k-2}L(X_{t_i}^{t_0,\xi,\beta},\beta_{t_i}, m_{t_i})\delta  + g_{k-1}(X_{t_{k-1}}^{t_0,\xi,\beta}, m^{{k-1}})\bigg]
  \ge \inf_{\beta \in \cA_S}\E\Big[L(\xi,\beta, \cL(\xi))\delta + g_1(X^{t_0,\xi,\beta}_{t_1}, m^{1}) \Big].
\end{align*}
Next, using the fact that $(\alpha^1,m^1)$ is the MFE for the single-period MFG with data $(\xi, Z_{t_1},b,L,g_1)$, it follows from definition of $g_1$ that
\begin{align}
\notag
  &\inf_{\beta \in \cA_S}\E\Big[L(\xi,\beta, \cL(\xi))\delta + g_1(X^{t_0,\xi,\beta}_{t_1}, m^{1}) \Big]\\\notag
  & \quad = \E\Big[L(\xi,\alpha^1, \cL(\xi))\delta + g_1(X^{t_0,\xi,\alpha^1}_{t_1}, m^{1}) \Big]\\\label{eq:step.before.directed}
  &\quad = \E\Big[L(\xi,\alpha_{t_0}, \cL(\xi))\delta + \essinf_{\tilde\beta\in \cA_S(X^{t_0,\xi,\alpha^1}_{t_1})}\E\Big[L(\zeta, \tilde\beta,m^1)\delta + g_{2}(X_{t_2}^{t_1,\zeta,\tilde\beta}, m^{2}) \Big| \zeta  = X^{t_0,\xi,\alpha^1}_{t_1}\Big]\Big].
\end{align}
One readily verifies that the set $\Big\{\E\Big[L(\zeta, \tilde \beta,m^1)\delta + g_{2}(X_{t_2}^{t_1,\zeta,\tilde\beta}, m^{2}) \Big| \zeta  = X^{t_0,\xi,\alpha^1}_{t_1}\Big]:  \tilde\beta\in \cA_S(X^{t_0,\xi,\alpha^1}_{t_1}) \Big\}$ is directed downwards in the sense of \cite[Theorem A.37]{FS3dr}, which implies that there is a sequence $(\beta^n)_{n\geq 1} \in \cA_S(X_{t_1}^{t_0}, \xi, \alpha^1)$ such that 
\begin{align*}
  &\essinf_{\tilde\beta\in \cA_S(X^{t_0,\xi,\alpha^1}_{t_1})}\E\Big[L(\zeta, \tilde\beta,m^1)\delta + g_{2}(X_{t_2}^{t_1,\zeta,\tilde\beta}, m^{2}) \Big| \zeta  = X^{t_0,\xi,\alpha^1}_{t_1}\Big]\\
  &\quad = \lim_{n\to\infty}\E\Big[L(\zeta, \beta^n,m^1)\delta + g_{2}(X_{t_2}^{t_1,\zeta,\beta^n}, m^{2}) \Big| \zeta  = X^{t_0,\xi,\alpha^1}_{t_1}\Big]
\end{align*}
as a monotone sequence.
Thus, by monotone convergence theorem, we have
\begin{align*}
  &\E\Bigg[\essinf_{\tilde\beta\in \cA_S(X^{t_0,\xi,\alpha^1}_{t_1})}\E\Big[L(\zeta, \tilde\beta,m^1)\delta + g_{2}(X_{t_2}^{t_1,\zeta,\tilde\beta}, m^{2}) \Big| \zeta  = X^{t_0,\xi,\alpha^1}_{t_1}\Big]\Bigg]\\
  &\quad = \lim_{n\to\infty}\E\bigg[L(X^{t_0,\xi,\alpha^1}_{t_1}, \beta^n,m^1)\delta + g_{2}(X_{t_2}^{t_1,X^{t_0,\xi,\alpha^1}_{t_1},\beta^n}, m^{2})\bigg]\\
  &\quad \ge \inf_{\tilde\beta\in \cA_S(X^{t_0,\xi,\alpha^1}_{t_1}) } \E\bigg[L(X^{t_0,\xi,\alpha^1}_{t_1}, \tilde\beta,m^1)\delta + g_{2}(X_{t_2}^{t_1,X^{t_0,\xi,\alpha^1}_{t_1},\tilde\beta}, m^{2})\bigg].
\end{align*}
Thus, coming back to \eqref{eq:step.before.directed} and using the fact that $(\alpha^i,m^i)$ is the MFE for the single-period MFG with data $(X_{t_{i-1}}, Z_{t_i}-Z_{t_{i-1}}, b, L,g_i)$, we have that
\begin{align*}
    \inf_{\beta \in \cA_S}\E\Big[L(\xi,\beta, \cL(\xi))\delta &+ g_1(X^{t_0,\xi,\beta}_{t_1}, m^{1}) \Big]\\
    & \ge \E\Big[L(\xi,\alpha_{t_0}, \cL(\xi))\delta + \E\Big[L(X^{t_0,\xi,\alpha^1}_{t_1}, \alpha^2,m^1)\delta + g_{2}(X_{t_2}^{t_0,\xi,\alpha^2}, m^{2}) \Big]\Big]\\ 
  &= \E\bigg[\sum_{i=0}^1L(X^{t_0,\xi,\alpha}_{t_i}, \alpha_{t_i},m_{t_i})\delta + g_{2}(X_{t_2}^{t_0,\xi,\alpha^2}, m^{2}) \bigg].
\end{align*}
Using exactly the same idea subsequently for $i=2,\dots,k-1$, we arrive at
\begin{align*}
\E\bigg[\sum_{i=0}^1L(X^{t_0,\xi,\alpha^i}_{t_i}, \alpha_{t_i},m_{t_i})\delta + g_{2}(X_{t_2}^{t_0,\xi,\alpha^2}, m^{2}) \bigg]   &\ge \E\bigg[\sum_{i=0}^{k-1}L(X_{t_i}, \alpha_{t_i},m_{t_i})\delta + g_k(X_{t_k}^{t_0,\xi,\alpha^k}, m_{t_k}) \bigg]\\
   &= \E\bigg[\sum_{i=0}^{k-1}L(X_{t_i}, \alpha_{t_i},m_{t_i})\delta + G(X_{t_k}, m_{t_k}) \bigg],
\end{align*}
which proves optimality, and therefore, $(\alpha,m)$ is an MFE.

  \emph{Step 3: Uniqueness.}
  When $b$ does not depend on the measure argument, using Assumption \ref{ass:LL.condition}, the proof of uniqueness is essentially the same as in the single-period case, which is performed in Proposition \ref{prop:unique.single.period}.
  It is thus omitted in order to avoid repetition.
\end{proof}
\begin{remark}\label{rem.sufficient.conditions}
  Although Theorem \ref{thm.rep.full} requires the coefficients to satisfy the Lasry-Lions monotonicity condition (see Assumption \ref{ass:LL.condition}), a close observation of the proof reveals that what is actually needed is that each single period game between $t_i$ and $ t_{i+1}$ has a unique MFE. 
 In particular, uniqueness in each period ensures that the terminal cost functions $\{g_i\}_{i \leq k}$ are well-defined. In Section \ref{section:Two-periodLQ} below, we provide an explicit example for the pasting procedure where the single-period game has a unique equilibrium without satisfying Assumption \ref{ass:LL.condition}.

More generally, this pasting procedure poses the larger question of ``selection of equilibria'':
  Without uniqueness, one may need to appropriately select which equilibria should be pasted together.
  But this is not straightforward since, in this case, the new terminal cost functions we introduced may, a priori, not even be well-defined.
  However, we mention that pasting results for equilibria have been obtained, without uniqueness, in \cite{mou.zhang.2024minimal}. 
  In this work, the authors paste over time the minimal equilibrium in a particular class of extended continuous-time MFGs.
  Their approach is based on the master equation.
  These results suggest that pasting might be possible for minimal and maximal equilibria of discrete-time submodular MFGs, see \cite{dianetti2022strong, dianetti.ferrari.fischer.nendel.2019, dianetti.ferrari.fischer.nendel.2022unifying}. 
  We leave this question for future research.
\end{remark}

\subsection{A two-period linear-quadratic game}\label{section:Two-periodLQ}
We revisit the example given in Section \ref{Sec.LQ.example} and consider its two-period extension still with $\delta = 1$. 
In the notation of the multi-period setup, $$L(x, a, m) = ca^2 + c_L(x - \underline{m})^2\quad\text{and} \quad g_2(x, m) = G(x, m) = (x - \underline{m})^2.$$ 
Given a random variable $X_1$ and the increment $\Delta Z_i \coloneqq Z_i - Z_{i-1}$, the equilibrium $(\alpha^2, m^2)$ for the second period is given as a function of $X_1$ and its law by:  
$$m^2 = \mathcal{L}\Big(\frac{c}{1+c} X_1 + \frac{1}{1+c}\E[X_1] + \Delta Z_2\Big)\quad\text{and} \quad \alpha^2 =\psi_2(X_1; m^2) \equiv \frac{\underline{m^2} - X_1}{1 + c},$$
where $\underline{m^2} = \int x m^2(dx) = \E[X_1]$. Note that $m^2$ here depends (only) on the law of $X_1$, which will be the dummy variable $m$ in the calculations below. In particular, $\underline{m^{2,m}} = \underline{m}$ for any $m \in \mathcal{P}_1(\R)$. Then, using the equilibrium $(\alpha^2, m^2)$ for the second period, from \eqref{eq:def.calG} we get
\begin{align*}
    g_1(x, m) &= \mathcal{G}(G, x, m) = \inf_{a \in A}\E\Big[L(x, a, m) + G\big(x + a + \Delta Z_2, m^{2,m}\big)\Big]\\
    & = \E\Big[L(x, \psi_2(x; m^{2,m}), m) + G\big(x + \psi_2(x; m^{2,m} + \Delta Z_2, m^{2,m}\big)\Big]\\
    & = c_L(x - \underline{m})^2 + \frac{c}{(1+c)^2}(x - \underline{m})^2 + \E\Big[\big(\frac{c}{1+c}(x - \underline{m}) + \Delta Z_2\big)^2\Big]\\
    & = \Big(c_L + \frac{c}{1+c}\Big)(x - \underline{m})^2 + \sigma^2.
\end{align*}
The function $g_1$ is taken as terminal cost of the first period game:
For fixed $m^1$, we aim to minimize $$J_{m^1}(\alpha^1) = \E\Big[L(\xi, \alpha^1, m_\xi) + g_1(X^{\alpha^1}, m^1)\Big],$$
which is equivalent to minimizing
$$\E\Big[c (\alpha^1)^2 + \tilde c\big((\alpha^1 + \xi)^2 - 2\underline{m^1}\alpha^1\big)\Big]$$
with $\tilde c \coloneqq c_L + \frac{c}{1+c}$. First order condition gives $$\alpha^1 = \frac{\tilde c}{c + \tilde c} (\underline{m^1} - \xi), \quad X^{\alpha^1} = \frac{c}{c + \tilde c}\xi + \frac{\tilde c}{c + \tilde c}\underline{m^1} + \Delta Z_1.$$
It is easy to verify that $m^1 = \mathcal{L}\Big(\frac{c}{c+\tilde c}\xi + \frac{\tilde c}{c + \tilde c}\E[\xi] + \Delta Z_1\Big)$ is an equilibrium for the first period. In particular, $\underline{m^1} = \E[\xi]$. Using the idea of pasting, $(m_{t_i}, \alpha_{t_i})_{i = 0, 1, 2}$ is the equilibrium of the two-period game, where
$$\begin{cases}
    m_{t_0} = m_\xi,\\
    m_{t_1} = m^1,\\
    m_{t_2} = m^2(m^1),
\end{cases}\;\text{and} \quad 
\begin{cases}
    \alpha_{t_0} = \alpha^1,\\
    \alpha_{t_1} = \psi_2(X^{\alpha^1}; m_{t_2}),\\
    \alpha_{t_2} = 0.
\end{cases}$$

\section{Donsker-type result for MFGs: Qualitative convergence}
\label{sec:convergence to continuous}

The goal of this section is to show, by using weak convergence of probability measures, that the multi-period MFG allows to approximate the game in continuous-time.
This section can be read independently from the previous ones as we will assume to be given discrete-time equilibria (without relying on the previous construction) and study their convergence as the step size $\delta$ tends to zero.

\textbf{Notation.} In the remaining sections, the set of admissible Markovian controls and the cost functional for the $k$-period MFG are denoted by $\cA_M^k$ and $J^k$, respectively.
\subsection{Continuous-time mean field game}
In the continuous-time case, we fix a time-horizon $T>0$ and assume that the complete probability space $(\Omega, \cF,\P)$ carries a $d$-dimensional Brownian motion $W$ and an independent initial state $\xi$ taking values in $\R^d$.
Denote by $\mathbb F = (\cF_t)_{t \in [0,T]}$ the right-continuous extension of the filtration generated by $\xi$ and $W$.
Given a progressive measurable integrable process $\alpha$
and a flow of measures $(m_t)_{t\in [0,T]} $, where $m_t\in \cP_2(\R^d)$ and $t\mapsto m_t$ is continuous, denote by $X^\alpha$ the solution of
\begin{equation}
\label{eq:SDE.cont}
      X^\alpha_{t} = \xi + \int_0^tb(X^\alpha_{s},\alpha_{s}, m_{s})ds + \sigma W_t,
\end{equation}
where $\sigma$ is an invertible $d\times d$ matrix with Frobenius norm $|\sigma|$ and such that $\sigma\sigma^\top>0 $ (in the sense of positive definite matrices). 
A process $\alpha $ is said to be an admissible (continuous-time) Markovian control if $\alpha_t =  \alpha(t, X_t^\alpha)$ for some measurable function $\alpha$ and such that \eqref{eq:SDE.cont} admits a unique strong solution.
With slight abuse of notation, we will often use the same notation $\alpha$ for the control $\alpha(t,X^\alpha_t)$ and the map $\alpha$. 
The set of admissible (continuous-time) Markovian controls is denoted by $\mathcal A _M$. 

Given a (continuous-time) measure flow $m:[0,T] \to \cP_2 (\R^d)$,  the representative agent seeks to minimize the total cost
\begin{equation*}
  J_m(\alpha) :=\E\bigg[\int_0^TL(X^\alpha_s,\alpha_{s}, m_{s})ds + G(X^\alpha_{T}, m_{T}) \bigg]
\end{equation*}
over $ \alpha \in \cA _M$. 
A control $\alpha \in \cA _M$ is said to be optimal (for $m$) if $J_m (\alpha) \leq J_m(\beta)$ for any $\beta \in \cA _M$.

We say that a pair $(\alpha, m)$ is an MFE for the continuous-time MFG if $\alpha$ is an admissible Markovian control (i.e., $\alpha \in \mathcal A _M $) which is optimal for $J_m$ and $\P\circ(X^\alpha_t)^{-1} = m_t$ for all $t\in [0,T]$.

In the remainder of the paper, we will refer to this problem as \emph{the continuous-time mean field game}.
The continuous-time MFG is fully determined by the data $(\xi,W, b, L, G,T)$ and it is rather well-studied.
In particular, general conditions (comparable to Assumption \ref{Ass:single.exists}) are known to guarantee the existence of an MFE, while its
uniqueness follows by the same requirements as those we presented in the multi-period case, see for instance \cite{cardaliaguet2019master,cardelbook1,cardelbook2,CarmonaLacker15} and the references therein.

\subsection{Time discretization}
In order to derive convergence of the discrete-time equilibria to the continuous-time equilibria, let us specialize the setting of Section \ref{sec:multip} as follows:
For $k \in \N$, we divide the time interval $[0,T]$ into $k$ periods, and we set 
\begin{equation}
\label{eq:Gaussian.case}
\delta := \frac Tk,\quad \text{and}\quad t_i := \delta i
\quad \text{for all}\quad i=0,\dots,k.
\end{equation}
We further assume that $Z^k =( Z^k_{t_i})_{i=0,...,k}$ is a sequence square-integrable martingales starting from zero, with independent increments, $\E[|Z^k_{t_{i+1}}-Z^k_{t_i}|^2] =  \delta$ and satisfying 
\begin{equation}\tag{Z1}\label{Z1}
 \lim_{k\to\infty}\E\Big[ \sup_{0\le t\le T}|Z^k_{t} -  W_t|^2\Big] = 0,
\end{equation}
where we identify $Z^k$ with its continuous-time interpolation as a constant process on the time interval $[t_i, t_{i+1})$.
In the notation of Section \ref{sec:multip}, we are considering here the case $Z = \sigma Z^k$.
\begin{example}[{Discretized Brownian motion}] The most natural example is to simply put
  \begin{equation*}
    Z^k_{t_i} := W_{t_i}, 
  \end{equation*}
  where $(W_t)_{t\in[0,T]}$ is the Brownian motion introduced above in the continuous-time model.
  By definition of the Brownian motion, it follows that for each $k$, $Z^k$ has independent increments with normal distribution $\cN(0, \delta I_d)$ where $I_d$ is the identity matrix of $\R^d$.
  Moreover, $Z^k$ is a square-integrable martingale and \eqref{Z1} is satisfied.
  In fact, it holds $\E[\sup_{0\le t\le T}|Z^k_t -  W_t|^2]\le C\delta$ for some constant $C>0$.
\end{example} 

For later reference, we let $J^k_m$ and $\cA^k_M$ be the cost function and the set of Markovian controls in the multi-period MFG with $k$ periods, defined in the previous section.
Given an admissible multi-period control $(\alpha_{t_i})_{i=0,\dots,k}\in \cA_M^k$, and a discrete measure flow $(m_{t_i})_{i=0,\dots,k}$ with $m_{t_i}\in \cP_{p}(\R^d)$, recall that the associated discrete-time state satisfies
\begin{align*}
  X_{t_i}
  &=X_{t_{i-1}} + b(X_{t_{i-1}}, \alpha_{t_{i-1}}, m_{t_{i-1}})\delta + \sigma (Z^k_{t_i} - Z^k_{t_{i-1}}).
\end{align*}
Let us introduce the continuous-time interpolations $\widehat \alpha = (\widehat \alpha_t)_{t\in[0,T]}$ and $\widehat m = (\widehat m_t)_{t\in[0,T]}$ of $\alpha$ and $m$, respectively, given by
\begin{equation}
\label{eq:cont.interp.alpha.m}
  \widehat \alpha_t := \sum_{i=0}^{k-1}\alpha_{t_i} (X_{t_i})\mathbf{1}_{[t_i, t_{i+1})}(t) \quad \text{and}\quad \widehat m_t := \sum_{i=0}^{k-1}m_{t_i}\mathbf{1}_{[t_i, t_{i+1})}(t), \ 
  \widehat m_{T} := m_{T}, \quad t \in [0,T].
\end{equation}
Of course, neither $\widehat\alpha$ nor $\widehat m$ are continuous processes.
They are simply defined on the continuous-time interval $[0,T]$.
We further define a continuous-time interpolation $\widehat X$ of $(X_{t_i})_{i=0,\dots,k}$ as the process satisfying
\begin{equation}
\label{eq:cont.time.approx}
  \widehat X_t = \xi + \int_0^{t}b(\widehat X_{\eta^k(s)}, \widehat \alpha_s, \widehat m_s)ds +  \sigma Z^k_{t}, \quad \text{with $\eta^k(s)=t_i$ for $t_i<s\le t_{i+1}$.}
\end{equation}
One easily checks that $\widehat X_{t_i} =X_{t_i} $ for all $i=0,\dots,k$ and
\begin{equation*}
  \widehat X_t = X_{t_i} + b(X_{t_i}, \alpha_{t_i}, m_{t_i})(t - t_i) + \sigma (Z^k_{t} - Z^k_{t_i})\quad  \text{for all}\quad t_i<t\le t_{i+1}.
 \end{equation*}
In particular, $\widehat X$ is a càdlàg (i.e., right-continuous  with left-limits) process.

\subsection{Stability of equilibria: from discrete-time to continuous-time}

Our first stability result will be discussed under the following requirements.

\begin{assumption}\
\label{Ass:Compact.result}
\begin{enumerate}[label={(\roman*)}]  
  \item \eqref{Z1} holds;
  \item $\xi$ is such that $\E[|\xi|^4] < \infty$;
  \item $b(a,x,m) = a + b_0(x,m)$ and the map $a \mapsto L(x,a,m)$ is convex for any $(x,m)\in \R^d\times \cP_2(\R^d)$;
  \item $A$ is bounded;
  \item   The function $b_0$ is Lipschitz continuous and has linear growth, i.e.
	\begin{equation*}
    \begin{aligned}
        |b_0(x,m) - b_0(x',m')| &\le C_b 
        \big( |x-x'| + \cW_2(m,m')\big), \\
 	|b_0(x,m)| &\le C_b  (1 + |x|  +\| m \|_1 )
    \end{aligned}
	\end{equation*}
 for all $(x,m)\in \R^d\times \cP_2(\R^d)$ and for some constants $C_b\ge0$;
  \item \label{Ass:compact.ii} $G$ and $L$ are continuous and satisfy the local Lipschitzianity and growth conditions:
  \begin{equation*}
    \begin{aligned}
    |L(x,a,m) - L(x',a,m')| &\le C_L ( 1 + |x| +  |x'| + \| m\|_1 + \| m'\|_1 ) \big(|x-x'| + \cW_2(m,m')\big), \\ 
    |G(x,m) - G(x' ,m')| &\le C_G ( 1 + |x| +  |x'| + \| m\|_1 + \| m'\|_1 ) \big(|x-x'| + \cW_2(m,m')\big), \\ 
    |L(x, a,m)| &\le C_L (1 + |x|^2 + \| m \|_2^2), \\
    | G(x, m) |& \le C_G (1 + |x|^2  + \| m \|_2^2)
  \end{aligned}
    \end{equation*}
  for all $x,x'\in \R^d$, $m,m'\in \cP_2(\R^d)$, $a\in A$ and for some constants $C_L, C_G \ge0$.
\end{enumerate}
\end{assumption}

The following result gives the convergence of discrete-time MFEs to continuous-time equilibria.
\begin{theorem}
\label{thm:compactness}
Assume that for each $k\in \N$ the multi-period MFG with data $(\xi, Z, b, L, G, k)$ admits an MFE $(\alpha^k_{t_i},m^k_{t_i})_{i=0,\dots,k}$ with state process $(X^k_{t_i})_{i=0,\dots,k}$, and denote by $\widehat \alpha^k ,\widehat X^k,\widehat m^k$ the continuous-time interpolation as in \eqref{eq:cont.interp.alpha.m} and \eqref{eq:cont.time.approx}.

Under Assumption \ref{Ass:Compact.result}, the following statements hold true:
\begin{enumerate}
    \item\label{thm:compactness:tight} 
    The sequence $(\widehat m^k )_{k\ge0}$ admits  accumulation points; i.e.,  there exists   a continuous map $m: [0,T] \to \cP_2(\R^d) $ such that, up to a subsequence,   $\widehat m^k_t  \to m_t$ in $\cW_2$ as $k\to \infty$, for all $t \in [0,T]$.
    \item\label{thm:compactness:MFE} 
    Any  accumulation point $m$ of the sequence $( \widehat m^k )_{k\ge0}$ is the law of a continuous-time MFE $(\alpha,m)$ for some Markovian control $\alpha \in \cA_M$. 
\end{enumerate}
\end{theorem}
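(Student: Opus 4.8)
The plan is to establish tightness of the relevant processes, extract a subsequential limit, identify the limit as a weak solution of the McKean--Vlasov SDE with some limiting control, and finally verify the optimality half of the equilibrium condition via a limiting argument on the cost functionals. I would organize the argument in four steps as follows.

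\textbf{Step 1: Uniform moment bounds and tightness.} Using Assumption \ref{Ass:Compact.result}\ref{Ass:compact.ii}, the boundedness of $A$, the linear growth of $b_0$, the assumption $\E[|\xi|^4]<\infty$, and the $\L^2$-convergence \eqref{Z1} (which in particular gives $\sup_k \E[\sup_t |Z^k_t|^2]<\infty$), I would first derive a bound $\sup_k \sup_{i} \E[|X^k_{t_i}|^2] \le C$ by a discrete Gronwall argument on \eqref{eq:M.per.state}, and ideally a fourth-moment bound as well to have room for uniform integrability later. From the explicit interpolation $\widehat X^k_t = X^k_{t_i} + b(X^k_{t_i},\alpha^k_{t_i},m^k_{t_i})(t-t_i) + \sigma(Z^k_t - Z^k_{t_i})$ and the fact that $A$ is bounded (so the drift term is uniformly bounded and $t\mapsto \int_0^t b(\widehat X^k_{\eta^k(s)},\widehat\alpha^k_s,\widehat m^k_s)\,ds$ is uniformly Lipschitz), together with tightness of $\sigma Z^k$ in $C([0,T];\R^d)$ inherited from \eqref{Z1}, I would deduce that $(\widehat X^k)_k$ is tight in the Skorokhod space $D([0,T];\R^d)$ — indeed, since the only jumps come from $\sigma Z^k$ which converges uniformly to the continuous process $\sigma W$, the limit points are supported on $C([0,T];\R^d)$. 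Simultaneously, $(\widehat m^k)_k$: since $\widehat m^k_t = \mathrm{Law}(\widehat X^k_{\eta^k(t)})$ (up to the endpoint), the uniform second moment bound gives relative compactness in $\cW_1$ pointwise; promoting this to a continuous $\cW_2$-limit $m$ as in statement \eqref{thm:compactness:tight} uses the uniform fourth-moment bound (to upgrade $\cW_1$ to $\cW_2$ convergence) plus the uniform Lipschitz-in-time estimate on the flow. One must also track tightness of the controls: here I would use the convexity of $a\mapsto L(x,a,m)$ and work with the \emph{relaxed} control formulation, i.e.\ view $\widehat\alpha^k$ through the occupation measures $\Lambda^k(dt,da) = \delta_{\widehat\alpha^k_t}(da)\,dt$ on $[0,T]\times A$, which are automatically tight since $A$ is compact.

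\textbf{Step 2: Passage to the limit in the state equation.} Along a subsequence, jointly $(\widehat X^k, \Lambda^k, Z^k) \Rightarrow (X, \Lambda, W)$ with $X$ continuous and $\Lambda(dt,da) = \mu_t(da)\,dt$ a relaxed control; by Skorokhod's representation theorem I may assume a.s.\ convergence on a common space. Passing to the limit in \eqref{eq:cont.time.approx}, using the Lipschitz continuity of $b_0$ in $(x,m)$, the $\cW_2$-convergence $\widehat m^k_t \to m_t$, the fact that $b(a,x,m)=a+b_0(x,m)$ is affine (hence the drift integrates the relaxed control linearly), and that $\widehat X^k_{\eta^k(s)}$ and $\widehat X^k_s$ have the same limit $X_s$, I obtain $X_t = \xi + \int_0^t\!\!\int_A (a + b_0(X_s,m_s))\,\mu_s(da)\,ds + \sigma W_t$. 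Setting $\bar\alpha_s := \int_A a\,\mu_s(da)$, the affine structure gives $X_t = \xi + \int_0^t b(X_s,\bar\alpha_s,m_s)\,ds + \sigma W_t$. Since $\sigma$ is invertible and $b_0$ Lipschitz, strong uniqueness holds for this SDE given the drift, and a measurable-selection/Markovian-projection argument (mimic $\bar\alpha_s$ by a feedback function of $X_s$, using that the drift and the marginal law are unchanged) produces an admissible Markovian control $\alpha\in\cA_M$ with $\mathrm{Law}(X^\alpha_t) = m_t$; consistency $\P\circ(X^\alpha_t)^{-1}=m_t$ holds because $m_t$ is the limit of $\mathrm{Law}(\widehat X^k_{\eta^k(t)})$.

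\textbf{Step 3: Optimality of the limiting control.} This is the crux. For each $k$, optimality of $\alpha^k$ in the $k$-period game says $J^k_{m^k}(\alpha^k) \le J^k_{m^k}(\beta)$ for every admissible $\beta$. For the upper bound I would fix an \emph{arbitrary} continuous-time Markovian competitor $\beta\in\cA_M$ with bounded, Lipschitz feedback (a dense enough class, by a truncation/mollification argument using the growth bounds on $L,G$), discretize it to $\beta^k\in\cA_M^k$ by freezing on each $[t_i,t_{i+1})$, show $J^k_{m^k}(\beta^k) \to J_m(\beta)$ using the continuity and quadratic-growth bounds in Assumption \ref{Ass:Compact.result}\ref{Ass:compact.ii} together with the $\cW_2$-convergence of $\widehat m^k$ and standard strong-convergence estimates $\E[\sup_t|\widehat X^{k,\beta^k}_t - X^\beta_t|^2]\to 0$ (Gronwall plus \eqref{Z1}). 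For the lower bound, $\liminf_k J^k_{m^k}(\alpha^k) \ge J_m(\alpha)$: the running-cost term $\E\int_0^T L(\widehat X^k_{\eta^k(s)}, \widehat\alpha^k_s, \widehat m^k_s)\,ds$ converges along the subsequence, after Skorokhod representation, by continuity of $L$ plus uniform integrability from the fourth-moment bound — and here the convexity of $L$ in $a$ is what lets me pass from the relaxed limit $\int_A L(X_s,a,m_s)\mu_s(da)$ down to $L(X_s,\bar\alpha_s,m_s)$ via Jensen, so that the relaxed limit cost dominates the cost of the feedback control $\alpha$; the terminal term $\E[G(\widehat X^k_T, \widehat m^k_T)]$ converges by continuity of $G$ and uniform integrability. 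Combining, $J_m(\alpha) \le \liminf_k J^k_{m^k}(\alpha^k) \le \limsup_k J^k_{m^k}(\beta^k) = J_m(\beta)$ for all $\beta$ in the dense class, hence for all $\beta\in\cA_M$ by approximation; so $\alpha$ is optimal for $m$.

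\textbf{Main obstacle.} I expect Step 3 to be the hard part, and within it the lower-bound inequality $\liminf_k J^k_{m^k}(\alpha^k)\ge J_m(\alpha)$. The delicate points are: (a) controlling the discretization error $\widehat X^k_{\eta^k(s)}$ vs.\ the true state when the control $\widehat\alpha^k$ is merely measurable (no regularity in $k$ available for the equilibrium controls), which forces the relaxed-control detour; (b) ensuring that the Markovian projection in Step 2 does not destroy optimality — one needs that replacing $\bar\alpha_s$ by a feedback function leaves both the law flow and the value unchanged, which relies on the affine drift and convex cost so that Jensen is an equality only in the right direction; and (c) the uniform integrability needed to pass $\E[L(\cdots)]$ and $\E[G(\cdots)]$ through the limit, which is why a uniform fourth-moment bound on $(X^k_{t_i})$ — leveraging $\E[|\xi|^4]<\infty$ and $\sup_k\E[\sup_t|Z^k_t|^4]$, the latter needing \eqref{Z1} upgraded or a BDG-type bound for the martingales $Z^k$ — is built into Step 1 rather than just the second moment.
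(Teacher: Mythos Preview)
Your proposal is correct and follows essentially the same architecture as the paper's proof: a priori moment estimates and Aldous-type tightness, passage to the limit via relaxed controls and Skorokhod representation, recovery of a Markovian feedback through the mimicking theorem, and optimality via the Jensen lower bound on the relaxed cost combined with discretization of Lipschitz competitors. The one place where the paper is more explicit than your sketch is the final density step (your ``hence for all $\beta\in\cA_M$ by approximation''): rather than a direct mollification argument on the state process, the paper passes to the weak formulation via Girsanov (using that $A$ is bounded) and shows $J_m(\beta^n)\to J_m(\beta)$ by proving $\cE^{\beta^n}_T\to\cE^\beta_T$ in $\L^2$ against the uncontrolled process $Y$, which cleanly sidesteps the issue that $X^\beta$ need not depend continuously on a merely measurable feedback $\beta$.
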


\begin{remark}
It is worth pointing out that the choice of $A$ being bounded is mainly for simplification of the exposition. 
Indeed, for unbounded $A$, it is still possible to characterize the accumulation points $m$  of the sequence $(\widehat m^k )_{k\ge0}$, with the main difference that $(\alpha,m)$ would now be an MFE with Markovian weak control; that is, $\alpha$ is a feedback map such that the SDE \eqref{eq:SDE.cont} has a weak solution $X^\alpha$ whose law coincide with $m$, and $\alpha$ is optimal for $J_m$ (properly extended to the feedback maps for which \eqref{eq:SDE.cont} has a weak solution).
We refer to  \cite[Theorem 2.1]{Lacker15} for this notion of equilibrium.
Indeed,  the proof of Theorem \ref{thm:compactness} below can be easily be adapted to unbounded $A$. 
In doing so, the estimate \eqref{eq a priori estimate controls state measure} below can be recovered by using the argument in the proof of \cite[Lemma 5.1]{Lacker15}, and Step 4 below needs to be expanded with one additional approximation of any control by bounded ones.
Since we prefer to work under the strong formulation of the problem, the details are left to the interested reader.
\end{remark}

\subsection{Proof of Theorem \ref{thm:compactness}}

We split the proof into five steps. 
In the first two steps, we use compactness principles to show convergence of the sequences $(\widehat\alpha^k)_{k\ge1}$ and $(\widehat m^k)_{k\ge1}$.
In the ensuing two steps we show that the limiting processes $(\alpha,m)$ form MFEs.

\subsubsection{Proof of Claim \ref{thm:compactness:tight}} 
We prove each claim separately.

\medskip
\emph{Step 1: A priori estimates.}
In this step we show the estimates
  \begin{equation}\label{eq a priori estimate controls state measure}
    \sup_{k\ge1}\E\bigg[\sup_{t\in [0,T]}|\widehat X^k_t|^p \bigg] + \sup_{k\ge1} \int_{\R^d}|z|^p\widehat m^k_s(dz)ds < \infty
  \end{equation}
 for a generic $p>1$, as well as
  \begin{equation}\label{eq a priori tightness}
    \lim_{h\to0}\limsup_{k\to \infty}\sup_{\tau\in \cT_h}\E[|\widehat X^k_{\tau+h} - \widehat X^k_\tau|] =0,
  \end{equation}
  which will be used later to derive the tightness.

  Let $k\ge1$ be fixed.
  Using the linear growth condition on $b$, the boundedness of $A$ and the fact that $\xi$ and $W$ have $p^{\mathrm{th}}$ moments, we can find a constant $C>0$ that does not depend on $k$ such that
  \begin{align*}
    \E\Big[\sup_{s\in [0,t]}|\widehat X^k_s|^p\Big] &\le C\E\bigg[1 + \int_0^t |\widehat X^k_{\eta^k(s)}|^p + |\widehat\alpha_s^k|^p + \int_{\R^d}|z|^p\widehat m^k_s(dz)ds\bigg]\\
    &\le C\bigg(1 + \int_0^t \E\big[|\widehat X^k_{\eta^k(s)}|^p \big] ds + \sum_{j=0}^{k-1} \mathds{1}_{ \{t_j \leq t\} } \E\big[|\widehat X^k_{t_j}|^p\big]\delta \bigg), 
  \end{align*}
  where we have used that $\widehat m ^k_{t_j} = m ^k_{t_j} =  \P \circ (X^k_{t_j})^{-1} = 
  \P \circ (\widehat X^k_{t_j})^{-1} $, which follows from the consistency of $m^k$.
  Using the definition of $\eta^k$, we continue to obtain
  \begin{align*}
    \E\Big[\sup_{s\in [0,t]}|\widehat X^k_s|^p\Big] \le C\bigg(1 + \int_0^t\E\Big[\sup_{r \in [0,s]}|\widehat X^k_{\eta^k(r)}|^p\Big]ds \bigg)
    \le C\bigg(1 + \int_0^t\E\Big[\sup_{r \in [0,s]}|\widehat X^k_{r}|^p\Big]ds \bigg). 
  \end{align*}
Using Gronwall's lemma and the consistency of $m^k$, we obtain \eqref{eq a priori estimate controls state measure}.

We next prove \eqref{eq a priori tightness}.
Let $h>0$, and let $\cT_h$ be the set of $[0,T]$--valued stopping times $\tau$ such that $ \tau +h\le T$.
  For every $\tau\in \cT_h$, it follows by H\"older's inequality  that 
  \begin{align*}
    \E[|\widehat X^k_{\tau+h} - \widehat X^k_\tau|] 
    &\le \E\bigg[\int_{\tau}^{\tau+h}|b(\widehat X^k_{\eta^k(u)}, \widehat \alpha^k_u, \widehat m^k_u)|du \bigg] + |\sigma| \E[|Z^k_{\tau+h} - Z^k_{\tau}|]\\
    &\le h^{1/{p'}} \E\bigg[\int_0^T|b(\widehat X^k_{\eta^k(u)}, \widehat \alpha^k_u, \widehat m^k_u)|^pdu\bigg]^{1/p} + |\sigma|\sqrt{h}\\
    & \le Ch^{1/{p'}} \bigg(1 + \E\bigg[  \sup_{t\in [0,T]}|\widehat X^k_{t}|^p+ \int_0^T|\widehat \alpha^k_u|^pdu\bigg]^{1/p}\bigg) + |\sigma| \sqrt{h}\\
    &\le C h^{1/{p'}}  + |\sigma|\sqrt{h}  , 
  \end{align*}
  where $p'$ is the H\"older conjugate of $p$ and where we used  \eqref{eq a priori estimate controls state measure}.
Finally, \eqref{eq a priori tightness} easily follows from the previous estimate.
 
\medskip
  \emph{Step 2: Relaxed controls, weak limits of  $(\xi, Z^k, \widehat \alpha ^k, \widehat X ^k)$ and Skorokhod representation.}
Let $\Lambda$ denote the set of \emph{ relaxed controls} on $[0,T] \times A$; that is, the set of positive measures $\lambda$ on $[0,T] \times A$ such that $\lambda([s,t] \times A)=t-s$ for all $s,t \in [0,T]$ with $s<t$. 
Upon dividing every $\lambda \in \Lambda $ by $T$, the set $\Lambda$ corresponds to the set of probability measures on $[0,T]\times A$. 
Therefore, since $A$ is compact, $\Lambda$ is compact when endowed with the topology of weak convergence of probability measures. 
The set $\mathbb L ^p ([0,T]; A)$ is naturally included in the set of  relaxed controls via the map  $\alpha \mapsto \lambda^\alpha(dt,da):= \delta_{\alpha_t}(da)dt$. 
Moreover, any relaxed control  $\lambda$ admits (with slight abuse of notation) a factorization  $\lambda\colon  [0,T]\to \mathcal{P}(A)$ such that $\lambda(dt,da)=\lambda_t(da)dt$.

For any $k\geq 1$, define 
$$
\lambda^k(dt,da):= \delta_{\widehat \alpha _t^k}(da) dt, 
$$
notice that  $\lambda^k$ is a $\Lambda$-valued r.v.\ defined on $\Omega$ such that $\sigma\{ \lambda([0,t]\times E )\,|\,  E \in \mathcal{B}(A) \} \subset \mathcal{F}_t, \ \forall t \in [0,T].$
Since $\Lambda$ is compact, the sequence $\mathbb P \circ (\lambda^k)^{-1}$ is tight in $\mathcal P (\Lambda)$.
Let $\mathcal D $ denote the space of càdlàg functions $\varphi: [0,T] \to \R^d$,  endowed with the Skorokhod topology (see \cite{billingsley1999convergence}).
    Thanks to \eqref{eq a priori tightness}, it follows by Aldous' tightness criteria \cite[Theorem 16.9]{billingsley1999convergence} that the sequence $\mathbb P \circ (\widehat X^k)^{-1}$ is tight in  $\mathcal P (\mathcal D)$, while the sequence $\mathbb P \circ (\xi, Z^k)^{-1}$ is tight in  $\mathcal P (\mathbb R ^d \times \mathcal D)$ by Assumption \eqref{Z1}. 
Therefore, the sequence of probability measures
$$
 \mathbb P \circ (\xi, Z^k, \lambda ^k, \widehat X ^k)^{-1} 
$$
is tight in $\mathcal P (\mathbb R ^d \times \mathcal D  \times \Lambda \times \mathcal{D})$, where $\mathbb R ^d \times \mathcal D  \times \Lambda \times \mathcal{D}$ is endowed with the product topology. 
Prokhorov's theorem ensures the existence of a limiting probability measure 
$\mathbb P ^* \in  \mathcal P (\mathbb R ^d \times \mathcal D  \times \Lambda \times \mathcal{D})$ such that, {up to a subsequence} (not relabeled),
$ \mathbb P \circ (\xi, Z^k, \lambda ^k, \widehat X ^k)^{-1} \to \mathbb P ^*$ weakly as $k \to\infty$.

By Skorokhod representation theorem, we can find a probability space $(\bar \Omega, \bar{\mathcal F}, \bar{\mathbb P})$ supporting $\mathbb R ^d \times \mathcal D  \times \Lambda \times \mathcal{D}$-valued r.v.'s 
$
(\bar \xi ^k, \bar Z ^k,\bar \lambda ^k, \bar X ^k) 
$, $k \geq 1$, 
$
(\bar \xi , \bar W ,\bar \lambda , \bar X), 
$
such that: 
\begin{equation}\label{eq Skorokhod limits}
\begin{aligned}
    &\text{$\mathbb P \circ (\xi, Z^k, \lambda ^k, \widehat X ^k)^{-1} =  \bar{\mathbb P} \circ (\bar \xi ^k, \bar Z ^k,\bar \lambda ^k, \bar X ^k)^{-1}$ for any $k \geq 1$;}\\
    &\text{$\mathbb P ^* =  \bar{\mathbb P} \circ (\bar \xi , \bar W ,\bar \lambda , \bar X)^{-1}$;}\\
    &\text{$
(\bar \xi ^k, \bar Z ^k,\bar \lambda ^k, \bar X ^k)
\to
(\bar \xi , \bar W , \bar \lambda , \bar X), 
$ in $\mathbb R ^d \times \mathcal D  \times \Lambda \times \mathcal{D}$, $\bar{\mathbb P}$-a.s.}
\end{aligned}
\end{equation}
In particular, we have the limit $\widehat m_t^k \to \bar{\P} \circ (\bar X _t)^{-1}$ weakly, which together with \eqref{eq a priori estimate controls state measure}, in turn implies that
\begin{equation}
    \label{eq consistency first}
    \widehat m_t^k \to m_t \quad \text{in $\cW_2, \quad $ for $m := \bar{\P} \circ (\bar X )^{-1}$}.
\end{equation}
Moreover, 
defining the process $\bar \alpha _t:= \int_A a \bar \lambda _t (da)$, and noticing that 
$$
\begin{aligned}
  \bar{\mathbb P} \circ \bigg(\int_0^\cdot \int_A a \bar \lambda^k_s (da) ds , \bar X^k\bigg)^{-1}  &=  \mathbb P \circ \bigg(\int_0^\cdot \widehat \alpha^k_s ds ,\widehat X^k\bigg)^{-1}, \\
 \bar{\mathbb P} \circ \bigg(\int_0^\cdot \int_A a \bar \lambda_s (da) ds , \bar X \bigg)^{-1}
 &= \bar{\P} \circ \bigg(\int_0^\cdot \bar \alpha_s ds ,\bar X \bigg)^{-1},
\end{aligned}
$$ 
we have that $\bar{\mathbb P} \circ \big(\int_0^\cdot \widehat \alpha^k_s ds ,\widehat X^k \big)^{-1}$ converges weakly to $\bar{\P} \circ \big(\int_0^\cdot \bar \alpha_s ds ,\bar X \big)^{-1}$. 

We next show the continuity of $\bar X$.
Since, 
\begin{equation*}
    \bar X ^k_t = \bar \xi^k + \int_0^t \int_A a \bar \lambda _s^k(da) + b_0( \bar X ^k_s, \widehat m ^k_s)ds + \sigma \bar Z^k_t, \quad \bar{\P}\text{-a.s.,}
  \end{equation*}
 using the $\bar \P $-a.s.\ convergence of $(\bar \xi ^k, \bar Z ^k,\bar \lambda ^k, \bar X ^k)$, Gronwall's lemma and the continuity of $b_0$, we obtain
\begin{equation}
\label{eq SDE Skorokhod}
    \bar X _t = \bar \xi + \int_0^t  \bar \alpha _s + b_0( \bar X _s, m_s)ds + \sigma \bar W_t, \quad \bar{\P}\text{-a.s.}
\end{equation}
This clearly implies the continuity of $\bar X$.
Hence, the map $m: [0,T] \to \cP_2 (\R^d)$ defined in \eqref{eq consistency first} is continuous as well, which completes the proof of Claim \ref{thm:compactness:tight}. 

\subsubsection{Proof of Claim \ref{thm:compactness:MFE}} 
The proof of the claim is again divided into two steps.

\medskip
\emph{Step 3:  Markovian representation of $\alpha$ and consistency of $m$.}
  By using the 
  mimicking theorem in \cite[Corollary 3.7]{Brun-Shr13} to the SDE \eqref{eq SDE Skorokhod}, we find a probability space $( \Omega',  \cF', \P')$ along with a Brownian motion $ W'$ defined on it and a random variable $\xi'$ sharing the same law as $\bar \xi$ such that $ \P'\circ (X'_t)^{-1} = \bar \P\circ (\bar X_t)^{-1} $ for all $t \in [0, T]$, where $ X'$ satisfies
  \begin{equation*}
    X_t' = \xi' + \int_0^t\alpha_s( X_s') + b_0( X_s', m_s)ds + \sigma W'_t, \quad \P'\text{-a.s.}
  \end{equation*}
 with $ \alpha_t(x):= \E^{\bar \P}[\bar\alpha_t|\bar X_t=x]$.
Since $A$ is bounded, it follows that $\alpha_t(\cdot)$ is bounded measurable, so that by \cite[Theorem 1.1]{zhang2005strong} the previous SDE has a unique strong solution.
Hence, by uniqueness in law, we have $\P' \circ (X')^{-1} = {\P} \circ (X^\alpha)^{-1}$, where $X^\alpha$ denotes the unique strong solution  to the SDE 
\begin{equation*}
X^\alpha_t = \xi + \int_0^t\alpha_s( X_s^\alpha) + b_0( X_s^\alpha, m_s)ds + \sigma W_t, \quad \P\text{-a.s.},
\end{equation*}
defined on the initial probability space $(\Omega,\cF, \P)$.

Therefore, using the definition of $m$ in \eqref{eq consistency first}, we conclude that
$$
m_t = \bar{\P} \circ (\bar X_t)^{-1} = {\P} \circ ( X^\alpha_t)^{-1},
$$
which is the desired consistency.

\medskip
\emph{Step 4: Optimality of $\alpha$.}
In this step, we show that
\begin{equation}
    \label{eq optimality of alpha over OL controls}
    J_m(\alpha) \leq J _m(\beta), \quad \text{for any    control $\beta \in \cA_M$.}
\end{equation}
In order to do so, we would like to deduce the optimality of $\alpha$ from the optimality of $\alpha^k$ by discretizing the SDE \eqref{eq:SDE.cont} controlled by the measurable function $\beta$.
However, the Euler-Maruyama scheme from \cite{L-Yan02} for such an SDE would converge (weakly), given some regularity of the map $(t,x) \mapsto \beta_t(x)$, which is not satisfied for the generic $\beta \in \cA_M$.
We will show this result via subsequent approximations of $\beta$ through more regular Markovian controls.

\medskip
\emph{Step 4a: Optimality of $\alpha$ over  Lipschitz Markovian controls.}
We show \eqref{eq optimality of alpha over OL controls} for any $\beta \in \cA_M$ such that the map $(t,x) \mapsto \beta_t(x)$ is Lipschitz continuous.

By the continuity properties of the costs and the uniform integrability deriving from \eqref{eq a priori estimate controls state measure}, we find 
\begin{equation*}
    \begin{aligned}
          \mathbb{E}^{\bar{\P}} \bigg[ \int_0^T\int_A &L(\bar X_t,a, m_t) \bar \lambda _t(da) dt + G(\bar X _T ,m_T) \bigg] \\ 
       &= \lim_k \mathbb{E}^{\bar{\P}} \bigg[ \int_0^T\int_A L(\bar X ^k_t,a, \widehat m ^k_t) \bar \lambda ^k _t(da) dt + G(\bar X ^k_T , \widehat m ^k_T) \bigg] \\
       & = \lim_k \E\bigg[ \int_0^T \int_A L(\widehat X ^k_t, a, \widehat m ^k_t) \widehat \lambda ^k _t(da) dt+ G(\bar X ^k_T , \widehat m ^k_T) \bigg] \\
       &  = \lim_k \E\bigg[ \int_0^T L(\widehat X ^k_t,\widehat{\alpha}^k_t, \widehat m ^k_t) dt + G(\widehat X ^k_T , \widehat m ^k_T) \bigg] \\
       & = \lim_k J_{m^k}^k(\alpha^k).
    \end{aligned}
\end{equation*}
Moreover, by Jensen inequality (thus exploiting the convexity of $L$ in $a$) and the definitions of $\bar \alpha$ and the Markovian map $\alpha \in \cA_M$, we deduce
\begin{equation*}
    \begin{aligned}
       J_m(\alpha) &\leq  \mathbb{E}^{\bar{\P}} \bigg[ \int_0^T  L(\bar X_t,\bar \alpha _t, m_t)  dt + G(\bar X _T ,m_T) \bigg] \\
       &\leq  \mathbb{E}^{\bar{\P}} \bigg[ \int_0^T\int_A L(\bar X_t,a, m_t) \bar \lambda _t(da) dt + G(\bar X _T ,m_T) \bigg] . 
    \end{aligned}
\end{equation*}
Thus, combining the latter two inequalities, we conclude that
\begin{equation}
    \label{eq lowere semicont limits}
    \begin{aligned}
       J_m(\alpha) 
       &\leq\liminf_k J_{m^k}^k(\alpha^k).
    \end{aligned}
\end{equation}

We next discretize the SDE \eqref{eq:SDE.cont} controlled by $\beta$.
Let us now denote by $X^{\beta,k}$ the Euler-Maruyama discretization of $X^{\beta}$, given by
  \begin{align*}
    X_{t_i}^{\beta,k}
  &=X^{\beta,k}_{t_{i-1}} + \big({\beta^k_{t_{i-1}} + b_0(X^{\beta,k}_{t_{i-1}}, m^{k}_{t_{i-1}})}\big)\delta_k + \sigma (Z^k_{t_i} - Z^k_{t_{i-1}})
  \end{align*}
  with $\beta^{k}_{t_i}:= \beta_{t_i} (X^{\beta,k}_{t_i})$.
  We use the same notation to denote the continuous-time interpolation, i.e.,
  \begin{equation*}
    X_t^{\beta,k} = \xi + \int_0^t \beta_{\eta^k(s)}\big(X^{\beta,k}_{\eta^k(s)} \big) + b_0\big(X^{ \beta,k}_{\eta^k(s)},\widehat m^k_{\eta^k(s)} \big)ds + \sigma Z^k_t
  \end{equation*}
  with $\eta^k$ defined in \eqref{eq:cont.time.approx}.
  Since $\beta$ is assumed to be Lipschitz, we can apply \cite[Theorem 2.1]{CHAN199833} which asserts that $(X^{ \beta,k})_{k\ge1}$ converges in law to $X^{ \beta}$.
  Using the Lipschitz continuity of $\beta$, we also have that $ \beta_{\eta^k(t)}\big(X^{ \beta,k}_{\eta^k(t)} \big)$ converges to $\beta_t(X^{\beta}_t)$ for almost every $t$.
  Thus, since $L$ and $G$ are continuous, we have
  \begin{equation}\label{eq limits beta discretized}
  \begin{aligned}
       J_m(\beta) & = \E\bigg[\int_0^TL(X^{\beta}_t, \beta_t, m_t)dt+G(X^{\beta}_T,m_T)\bigg] \\
       & = \lim_{k\to \infty} \E\bigg[\sum_{j=0}^{k-1}L(X^{\beta,k}_{t_j}, \beta^{ k}_{t_j}, m^k_{t_j})\frac T k + G(X^{\beta,k}_{t_k}, m^k_{t_k})\bigg]  =  \lim_{k\to \infty} J_{m^k}^k(\beta^k).
  \end{aligned}
  \end{equation}

  This allows to deduce optimality of the candidate $\alpha$ constructed in Step 2.
  In fact, since  $(\alpha_{t_i})_{i=1,\dots,k}$ is optimal for the discrete-time game with $k$ periods, we have $$
    J_{m^k}^k(\alpha^k) \leq J_{m^k}^k(\beta^k),
$$
 for any $k\geq 1$. 
 Hence, taking limits as $k \to \infty$ and using \eqref{eq lowere semicont limits} and \eqref{eq limits beta discretized}, we deduce \eqref{eq optimality of alpha over OL controls} for any Lipschitz continuous $\beta \in \cA_M$. 
This concludes the proof of Step 4a.

\medskip
\emph{Step 4b: Optimality of $\alpha$ over  Markovian controls.}
We finally show \eqref{eq optimality of alpha over OL controls} for a (fixed)  generic $\beta \in \cA_M$.

By standard mollification, we approximate $ \beta$ by a sequence of uniformly bounded, Lipschitz-continuous functions $\beta^n:[0,T]\times \R^d \to A$, and we denote by $X^{\beta^n}$ the related state process satisfying   
\begin{equation*}
      X^{\beta^n}_t = \xi + \int_0^t \beta^n_s(X^{\beta^n}_s) + b_0(X^{\beta^n}_s, m_s)ds + \sigma W_t.
\end{equation*}
Consider the processes
  \begin{equation*}
    Y_t:= \xi + \int_0^t b_0 (Y_s, m_s) ds + \sigma W_t\quad \text{and}\quad \cE(M)_t:= \exp\Big(M_t - \frac12\langle M\rangle_t\Big)
  \end{equation*}
  for any continuous square integrable martingale $M$ with quadratic variation $\langle M \rangle$.
  Define
  \begin{equation*}
    \mathcal E ^{\beta^n}_t := \cE\Big(\int_0^\cdot\sigma^{-1} \beta^n_s(Y_s) dW_s\Big)_{t} , 
    \quad 
    \mathcal E ^\beta_t := \cE\Big(\int_0^\cdot \sigma^{-1}  \beta_s(Y_s) dW_s\Big)_{t}, 
   \end{equation*}
and probability measures $\mathbb Q ^n$, $\mathbb Q$ with corresponding densities $\mathcal E ^{\beta^n}_T $, $\mathcal E ^\beta_T$ with respect to $\P$.
By boundedness of $\beta$ and $\beta^n$, we can use
Girsanov's theorem to deduce that the processes $\tilde W ^n := W - \int_0^{\cdot}  \sigma^{-1} \beta_s^n ( Y_s) ds$ and $\tilde W :=W - \int_0^{\cdot}  \sigma^{-1} \beta_s ( Y_s) ds$ are Brownian motions with respect to $\mathbb Q ^n$ and $\mathbb Q$, respectively.

The dynamics for $Y$ can be rewritten as 
$$
Y_t:= \xi + \int_0^t \beta^n_s(Y_s) +  b(Y_s, m_s) ds + \sigma \tilde W ^n_t
$$
and, by weak uniqueness of its solution, we deduce that $\mathbb Q ^n \circ (\beta^n(Y),Y)^{-1} = \mathbb P \circ (\beta^n (X^{\beta^n}), X^{\beta^n}) ^{-1}$, which implies that
$$
J_m(\beta^n) = \mathbb E \bigg[ \int_0^T \mathcal E _t (\beta^n) L (Y_t, \beta_t^n(Y_t), m_t) dt + \mathcal E _T (\beta^n) G(Y_T, m_{T}) \bigg].
$$
Similarly, we find
$$
J_m(\beta) = \mathbb E \bigg[ \int_0^T \mathcal E _t (\beta) L (Y_t, \beta_t(Y_t), m_t) dt + \mathcal E _T (\beta) G(Y_T, m_{T}) \bigg].
$$

Since $(\beta^n)_{n\ge1}$ is uniformly bounded  and  converges to $\beta$ in $\L^1 ([0,T] \times \R^d)$, it follows by It\^o's isometry  that 
$\mathcal E ^{\beta^n} \to \mathcal E ^\beta$ in $\L^2$.
Thus, by using the continuity of $L$ and $G$ and the growth conditions, we deduce that
  \begin{align*}
  J_m(\beta) =\lim_{n\to\infty} J_m(\beta^n).
  \end{align*}
The latter limit, combined with Step 4a, implies \eqref{eq optimality of alpha over OL controls}.
This completes the proof of the theorem.

\section{Donsker-type result for MFGs: Quantitative convergence}
\label{sec:Donsker}
In this section, we present a characterization result for multi-period MFEs in which the control is a minimizer of the Hamiltonian of the problem along suitably defined adjoint processes.
While such results are classical in continuous-time MFGs and have had various applications, they have not been explored in the discrete-time case:
The characterization is thus of independent interest.
For our stability analysis, such a result will prove useful to derive the convergence from the multi-period discrete-time games to their continuous-time analogues.
In particular, with respect to the results in Section \ref{sec:convergence to continuous}, these characterizations allow us to obtain a convergence rate under the Lasry-Lions monotonicity.

In this section, we will work under Assumption \ref{Ass:Compact.result}. 
Before introducing the set-up, we comment our approach and assumptions.
\begin{remark}\label{remark separate drift}
    In this section, the key tool  is to use a Girsanov tranformation to rewrite the cost functional by decoupling the state equation from the control variable, and then to exploit the well-known  connection of the optimization problem with BSDEs.
    In the standard cases,  where the drift $b$ is assumed to be bounded (see \cite{cardelbook1}), the typical Girsanov tranformation removes the drift $b$.
    In the sequel, in order to keep the discussion concise and to allow for a better comparison with the results in Section \ref{sec:convergence to continuous}, we will work under Assumption \ref{Ass:Compact.result}, so that 
    our drift will be of separate form (i.e., $b(x,a,m)=a+b_0(x,m)$) with $A$ being bounded. 
    As a consequence, it is natural to perform the Girsanov decoupling only by removing the control term from the drift, and to use a slightly different version of a reduced Hamiltonian (with respect to the standard cases in \cite{cardelbook1}).
    We stress that all the results in this section do not directly exploit the separate form of the drift $b$, and that the case of a non-separable bounded $b$ can be treated with similar arguments.
\end{remark}

\subsection{Weak formulation of discrete-time MFGs}\label{subsec:charc.MFE.mult.period} 
{In the notation of Section \ref{sec:convergence to continuous}, take $k \in \N$ and divide the time interval $[0,T]$ into $k$ periods of length $\delta_k := \frac T k$.
In the rest of this section, $Z^k = (Z^k_{t_i})_{i=0,...,k}$ is assumed to be the discretized Brownian motion; that is, we take
$$
Z^k_{t_{i}} = W_{t_i}, \quad i=0,\dots,k.
$$}
We start by introducing the probabilistic weak formulation of the game under Assumption \ref{Ass:Compact.result}:
For $\sigma > 0$ (which is taken to be constant for simplicity), let us consider the process
\begin{equation}
\label{eq:driftless}
  Y_{t_{i+1}} =  Y_{t_i} + b_0(Y_{t_i},m_{t_i}) \delta_k + \sigma (Z^k_{t_{i+1}} - Z^k_{t_i}), \quad i=0,\dots,k-1,  \quad Y_0=\xi,
\end{equation}
and for any flow of measures $m = (m_{t_i})_{i \leq k}$ and control $\beta = (\beta_{t_i}(\cdot))_{i \leq k} \in \cA ^k_M$, we consider the process
\begin{equation}\label{eq noise Girsanov}
  Z^{\beta}_{t_i} := Z^k_{t_i} - \sum_{j=0}^{i-1}\sigma^{-1}  \beta_{t_j}(Y_{t_j}) \delta_k, \quad i=0,\dots,k-1, 
\end{equation}
{with the convention $\sum_{j=0}^{-1} P_{t_j}:= 0$, for any generic process $(P_{t_i})_i$.}
Let $\P^{\beta}$ be the probability measure on $(\Omega,\cF)$ such that 
\begin{equation}
  \label{eq:Girsanov} \P^{\beta}\circ(Z^{\beta}_{t_i},\xi)^{-1} = \P\circ(Z^k_{t_i},\xi)^{-1}, \quad  i = 0,\dots,k,
  \end{equation}
  and $Z^{\beta}$ is a $\P^{\beta}$-martingale.
  It is easily checked, using Girsanov's theorem (for the continuous-time interpolation of the given processes and where $Z^k$ interpolates a Brownian motion), that it suffices to take $\P^{\beta}$ as
  \begin{equation}
\label{eq:discrete.measu.change}
  \frac{d\P^{\beta}}{d\P} :=\exp\bigg(\sum_{j=0}^{k-1}\sigma^{-1}   \beta_{t_j}(Y_{t_j}) (Z^k_{t_{j+1}} - Z^k_{t_j}) - \frac12\sum_{j=0}^{k-1}|\sigma^{-1} \beta_{t_j}(Y_{t_j}) |^2\delta_k \bigg).
\end{equation} 
As we assume $A$ to be bounded, the probability measure $\P^{\beta}$ will be equivalent to $\P$.
The  ``weak'' cost function is given by
\begin{equation*}
  \cJ^k_m(\beta) := \E^{\P^{\beta}}\bigg[\sum_{j=0}^{k-1}L(Y_{t_j}, \beta_{t_j}(Y_{t_j}), m_{t_j})\delta_k + G(Y_{t_k}, m_{t_k}) \bigg].
\end{equation*}
By definition of $Z^{\beta}$, the process $Y$ satisfies 
  \begin{equation*}
    Y_{t_{i+1}} = Y_{t_i} + (\beta_{t_i}(Y_{t_i}) + b_0(Y_{t_i}, m_{t_i}) ) \delta_k + \sigma(Z^{\beta}_{t_{i+1}} - Z^{\beta}_{t_i}), \quad \P^{\beta}\text{-a.s}, \quad  i=0,...,k-1.
  \end{equation*}
Thus, by \eqref{eq:Girsanov}, it follows that
  \begin{equation}
  \label{eq:weak-solution}
    \P^{\beta}\circ Y_{t_i}^{-1} = \P\circ (X_{t_i}^\beta )^{-1}, \quad i=0,...,k.
  \end{equation}
This implies that
  \begin{equation}\label{eq weak cost vs strong cost}
  \begin{aligned}
    J^k_m(\beta) &= \E\bigg[ \sum_{j=0}^{k-1}L( X_{t_j}, \beta_{t_j}( X_{t_j}), m_{t_j})\delta_k + {G}(X_{t_k}, m_{t_k}) \bigg]\\
            &=\E^{\P^{\beta}}\bigg[\sum_{j=0}^{k-1}L(Y_{t_j}, \beta_{t_j}(Y_{t_j}), m_{t_j})\delta_k + G(Y_{t_k}, m_{t_k}) \bigg]=\cJ^k_m(\beta).
  \end{aligned}
  \end{equation}

 With the aim of deriving a characterization of optimal controls in the discrete-time framework, we discuss now a formulation of the problem via \emph{open-loop} controls.
 Denote by $\F^k:= (\cF^k_i)_{i=0,...,k}$ the filtration generated by $\xi$ and $Z^k$; i.e., $\cF^k_i : =\sigma (\xi, Z^k_{t_0},\cdots,Z^k_{t_i} )$.
 Now consider the set $\cA^k_{O}$ of admissible {open-loop} controls, which are just $\F^k$-progressively measurable, $A$-valued discrete processes $\gamma = (\gamma_{t_i})_{i = 0, \dots, k}$.
 For $\gamma \in \cA^k_{O}$, define $Z^\gamma$ and $\P^{\gamma}$ as in \eqref{eq noise Girsanov} and \eqref{eq:discrete.measu.change}, respectively. 
 Discrete Girsanov's theorem still holds, and the uncontrolled process $Y$ can be rewritten as
 \begin{equation}\label{eq:weakdiscreteopenloopY}
    Y_{t_{i+1}} = Y_{t_i} + (\gamma_{t_i} + b_0(Y_{t_i}, m_{t_i}) ) \delta_k + \sigma(Z^{\gamma}_{t_{i+1}} - Z^{\gamma}_{t_i}), \quad \P^{\gamma}\text{-a.s}, \quad  i=0, \dots,k-1.
  \end{equation}
With slight abuse of notation, one can define the weak cost function with respect to an open-loop control 
\begin{equation*}
    \mathcal{J}^k_m(\gamma) := \E^{\P^{\gamma}}\bigg[\sum_{j=0}^{k-1}L(Y_{t_j}, \gamma_{t_j}, m_{t_j})\delta_k + G(Y_{t_k}, m_{t_k}) \bigg].
\end{equation*}
Clearly, for a given $m$ one has $\inf_{\beta \in \cA^k_M}\cJ^k_m(\beta) \geq \inf_{\gamma \in \cA^k_O}\mathcal{J}^k_m(\gamma)$.
We have the following lemma.
\begin{lemma}
\label{lem:weak-strong}
Under Assumption \ref{Ass:Compact.result}, any MFE $(\alpha_{t_i}, m_{t_i})_{i=0,\dots, k}$ for the multi-period game is such that $\alpha$ is optimal for $\cJ^k_m$ and $\P^{\alpha}\circ(Y_{t_i})^{-1} = m_{t_i}$ for all $i=0,\dots, k$. Moreover, $$\inf_{\beta \in \cA^k_M}\cJ^k_m(\beta) = \inf_{\gamma \in \cA^k_O}\mathcal{J}^k_m(\gamma).$$
\end{lemma}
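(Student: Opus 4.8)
We treat the two assertions separately. The first is immediate: by \eqref{eq weak cost vs strong cost} one has $J^k_m(\beta)=\cJ^k_m(\beta)$ for every $\beta\in\cA^k_M$, so a minimizer $\alpha$ of $J^k_m$ over $\cA^k_M$ also minimizes $\cJ^k_m$ over $\cA^k_M$, and \eqref{eq:weak-solution} gives $\P^{\alpha}\circ Y_{t_i}^{-1}=\P\circ(X^{\alpha}_{t_i})^{-1}=m_{t_i}$, using the MFE consistency. For the infimum identity, the inequality $\inf_{\beta\in\cA^k_M}\cJ^k_m(\beta)\ge\inf_{\gamma\in\cA^k_O}\cJ^k_m(\gamma)$ is clear, since for $\beta\in\cA^k_M$ the process $\gamma_{t_i}:=\beta_{t_i}(Y_{t_i})$ is $\F^k$-adapted (because $Y_{t_i}$ is $\cF^k_i$-measurable by \eqref{eq:driftless}) and $\cJ^k_m(\gamma)=\cJ^k_m(\beta)$, as the density \eqref{eq:discrete.measu.change} depends on the control only through $\beta_{t_j}(Y_{t_j})$, whence $\P^{\gamma}=\P^{\beta}$. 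Thus it remains to prove $\inf_{\beta\in\cA^k_M}\cJ^k_m(\beta)\le\inf_{\gamma\in\cA^k_O}\cJ^k_m(\gamma)$.

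For this I would run a backward dynamic programming argument in the weak formulation. Fix the flow $m$, let $\nu$ denote the common law of the increments $Z^k_{t_{i+1}}-Z^k_{t_i}$, set $v_k(x):=G(x,m_{t_k})$, and define, for $i=k-1,\dots,0$,
\begin{equation*}
  v_i(x):=\inf_{a\in A}\Big(L(x,a,m_{t_i})\delta_k+\int_{\R^d}v_{i+1}\big(x+(a+b_0(x,m_{t_i}))\delta_k+\sigma z\big)\,\nu(dz)\Big).
\end{equation*}
Under Assumption \ref{Ass:Compact.result} one checks inductively that each $v_i$ is continuous with a two-sided quadratic bound: the integrand is continuous in $(x,a)$ and locally uniformly dominated by a $\nu$-integrable function (by the quadratic growth of $L,G$ and the finite second moment of $\nu$), so the integral is continuous by dominated convergence, the infimum over the compact set $A$ of a jointly continuous function is continuous in $x$, and the bounds follow from $|L(x,0,m)|\le C_L(1+|x|^2+\|m\|_2^2)$ and the growth of $G$. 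Since $A$ is compact and the bracketed map is jointly continuous, the minimizer correspondence is nonempty, compact-valued and upper hemicontinuous, hence admits a Borel selection $a^*_i\colon\R^d\to A$; set $\beta^*_{t_i}:=a^*_i$, which lies in $\cA^k_M$ because $A$ is bounded. Establishing this regularity and the measurable selection is the only genuinely technical point; everything else is routine bookkeeping with conditional expectations.

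Finally, let $\gamma\in\cA^k_O$ be arbitrary. The key observation is that $\sigma(\xi,Z^{\gamma}_{t_0},\dots,Z^{\gamma}_{t_i})=\cF^k_i$: each $Z^{\gamma}_{t_j}$ with $j\le i$ is $\cF^k_i$-measurable by \eqref{eq noise Girsanov}, and conversely $Z^k_{t_j}$ is recovered from $\xi$ and $Z^{\gamma}_{t_0},\dots,Z^{\gamma}_{t_j}$ by induction on $j$ using that $\gamma_{t_l}$ ($l<j$) is already a function of these. Hence, since $\P^{\gamma}\circ(\xi,Z^{\gamma})^{-1}=\P\circ(\xi,Z^k)^{-1}$ by \eqref{eq:Girsanov}, under $\P^{\gamma}$ the increment $Z^{\gamma}_{t_{i+1}}-Z^{\gamma}_{t_i}$ has law $\nu$ and is independent of $\cF^k_i$, while $Y_{t_i}$ and $\gamma_{t_i}$ are $\cF^k_i$-measurable. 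Setting $U_k:=G(Y_{t_k},m_{t_k})$ and $U_i:=L(Y_{t_i},\gamma_{t_i},m_{t_i})\delta_k+\E^{\P^{\gamma}}[U_{i+1}\mid\cF^k_i]$, the tower property gives $\E^{\P^{\gamma}}[U_0]=\cJ^k_m(\gamma)$, and a backward induction based on \eqref{eq:weakdiscreteopenloopY}, the independence just noted, and the definition of $v_i$ yields $U_i\ge v_i(Y_{t_i})$ $\P^{\gamma}$-a.s.; evaluating at $i=0$ and using $\P^{\gamma}\circ\xi^{-1}=\P\circ\xi^{-1}$ gives $\cJ^k_m(\gamma)\ge\E[v_0(\xi)]$. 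Choosing $\gamma_{t_i}:=\beta^*_{t_i}(Y_{t_i})$ turns every inequality into an equality, so $\cJ^k_m(\beta^*)=\cJ^k_m(\gamma)=\E[v_0(\xi)]$, and combining this with the two inequalities above gives $\inf_{\beta\in\cA^k_M}\cJ^k_m(\beta)=\E[v_0(\xi)]=\inf_{\gamma\in\cA^k_O}\cJ^k_m(\gamma)$, which completes the proof.
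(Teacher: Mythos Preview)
Your argument is correct. The first assertion is handled exactly as in the paper, via \eqref{eq weak cost vs strong cost} and \eqref{eq:weak-solution}.

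For the infimum identity, however, you take a genuinely different route. The paper gives a one-line mimicking argument: given $\gamma\in\cA^k_O$, it defines the Markovian projection $\beta_{t_i}(y):=\E^{\P^\gamma}[\gamma_{t_i}\mid Y_{t_i}=y]$, observes that $\P^\gamma\circ Y^{-1}=\P^\beta\circ Y^{-1}$ because the drift is affine in the control, and then applies Jensen's inequality (using the convexity of $L$ in $a$ from Assumption~\ref{Ass:Compact.result}(iii)) to conclude $\cJ^k_m(\gamma)\ge\cJ^k_m(\beta)$. Your approach instead builds the dynamic-programming value functions $v_i$ backward, invokes a measurable selection to produce an optimal Markovian feedback, and then runs a verification argument under $\P^\gamma$ to show $\cJ^k_m(\gamma)\ge\E[v_0(\xi)]$ with equality for the selected feedback. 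This is longer and requires checking continuity of the $v_i$ and the selection step, but it has the advantage of not using the convexity of $L$ in $a$ at all; the paper's proof genuinely needs that hypothesis for Jensen. In the present setting both approaches are available, and the paper's is more economical; yours would still work in situations where convexity fails but compactness of $A$ and continuity of the data persist.
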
 
\begin{proof}
Since $\alpha$ is optimal for $J^k_m$, by \eqref{eq weak cost vs strong cost} it is optimal for $\cJ^k_m$. Moreover, the identity $\P^{\alpha}\circ(Y_{t_i})^{-1} = m_{t_i}$ follows from \eqref{eq:weak-solution} and  the fact that $(\alpha,m)$ is an MFE. Recall that $L$ is convex in $a$. Take $\gamma \in \cA^k_O$ and define $\beta_{t_i}(y) := \E^{\P^{\gamma}}[\gamma_{t_i}|Y_{t_i} = y]$. Then it is clear from \eqref{eq:weakdiscreteopenloopY} that $\P^{\gamma} \circ Y^{-1} = \P^{\beta} \circ Y^{-1}$, as the drift is linear in the control. Therefore, from the convexity of $L$ and Jensen's inequality we have 
\begin{align*}
    \mathcal{J}^k_m(\gamma) & \ge \E^{\P^{\gamma}}\bigg[\sum_{j=0}^{k-1}L(Y_{t_j}, \beta_{t_j}(Y_{t_j}), m_{t_j})\delta_k + G(Y_{t_k}, m_{t_k}) \bigg]\\
    & = \E^{\P^{\beta}}\bigg[\sum_{j=0}^{k-1}L(Y_{t_j}, \beta_{t_j}(Y_{t_j}), m_{t_j})\delta_k + G(Y_{t_k}, m_{t_k}) \bigg] = \cJ^k_m(\beta).
\end{align*}

\end{proof}
\begin{remark}
  To be clear, Lemma \ref{lem:weak-strong} identifies the \emph{map} $\alpha_{t_i}(\cdot)$ as optimal control in both the weak and strong MFE, not the associated \emph{processes}.
  That is, the process $\alpha_{t_i}(Y_{t_i})_{i=0,\dots,k}$ is the optimal control in the weak MFE, whereas the process $\alpha_{t_i}(X_{t_i})_{i=0,\dots,k}$ is the optimal control of the strong MFE. However, when $L$ is convex, an optimal closed-loop control is also optimal among all $Z^k$-adapted processes.
\end{remark}

\subsection{BS$\Delta$Es characterization of discrete-time MFGs}
Consider the Hamiltonian $h$ and the optimized Hamiltonian $H$, defined as
  \begin{equation*}
    h(x,a,m,z) := L(x,a,m) + z  \sigma^{-1} a \quad \text{and}\quad H(x,m,z):= \inf_{a\in A}h(x,a,m,z),
  \end{equation*}
for any $(x , a , m , z )   \in \R^d \times A  \times  \cP _2 (\R^d) \times \R ^d$.
We refer the reader to Remark \ref{remark separate drift} for a comparison of our Hamiltonians with the standard cases, see \cite{cardelbook1}.

\begin{remark}
\label{Ass:Charac.gen}
We underline that, under Assumption \ref{Ass:Compact.result}, $G$ and $H$ are continuous in $(x,m)$, and satisfy, for  a constant $C>0$, 
  $$
  \begin{aligned}
      |H(x,m,z)| &\le C \big(1 + |x|^2 + \| m \|_2^2 + |z| \big), \\
      |G(x,m)| & \le C \big( 1 + |x|^2 + \| m \|_2^2 \big),  \\
      |H(x,m,z) - H(x',m,z')| &\le C ( 1 + |x| +  |x'| + \| m\|_2  ) |x-x'| + C|z-z'|,
  \end{aligned}
  $$
  for all $x,x', z,z'\in \R^d$, $m\in \cP_2 (\R^d)$.
\end{remark}
 
We now rewrite the weak cost functional $\cJ^k_m$ in terms of a suitable BS$\Delta$E.
Let $\beta\in \cA^k_O$ be any open-loop control for the multi-period MFG, and consider the solution $(\cY^\beta,\cZ^\beta,C^\beta)$ of the (linear) BS$\Delta$E
\begin{equation}\label{eq:beta.bsde}
    \cY^\beta_{t_i} = G(Y_{t_k}, m_{t_k}) + \sum_{j=i}^{k-1}h(Y_{t_j}, \beta_{t_j}, m_{t_j}, \cZ^\beta_{t_j})\delta_k - \sum_{j=i}^{k-1}\cZ^\beta_{t_j}(Z^k_{t_{i+1}} - Z^k_{t_i}) - \big(C^\beta_{t_k}- C^\beta_{t_i}\big),\quad \P\text{-a.s.}
\end{equation} 
for any $i=0,\dots,k$, with the convention $\sum_{j=k}^{k-1} P_{j_i} := 0$ for any process $(P_{t_i})_i$.
A solution to this equation is an $\R\times \R^d \times \R$-valued square integrable $\F$-adapted process $(\cY^\beta,\cZ^\beta,C^\beta)$ such that \eqref{eq:beta.bsde} holds and
 with $(C_{t_i})_{i=0,\dots,k}$ being an $\F$-martingale, with $C^\beta_0=0$, and orthogonal to $Z^k$, i.e. $\E[(C_{t_{j+1}} - C_{t_j})(Z^k_{t_{j+1}} -Z^k_{t_j})\mid \cF_{t_j}] = 0$ for all $j=0,\dots,k-1$.

Notice that, for any $m = (m_{t_i})$ and $\beta \in  \cA^k_O$,
the BS$\Delta$E \eqref{eq:beta.bsde} has linear coefficients, so that (thanks to the boundedness of $A$) its
unique solution exists by \cite[Theorem 9]{Brian-Dely-Mem02} for sufficiently large $k$. 
Since $A$ is bounded, the choice of $k$ does not depend on $\beta$, and from now on we will always assume $k$ to be large enough for the BS$\Delta$E \eqref{eq:beta.bsde} to have a unique solution.
 
Next, using \eqref{eq noise Girsanov}, we rewrite the BS$\Delta$E \eqref{eq:beta.bsde} in terms of the noise $Z^{\beta}$ as
$$
\cY^\beta_{0} = G(Y_{t_k}, m_{t_k}) + \sum_{j=0}^{k-1}L(Y_{t_j}, \beta_{t_j}, m_{t_j}) \delta_k - \sum_{j=0}^{k-1}\cZ^\beta_{t_j}(Z^{\beta}_{t_{i+1}} - Z^{\beta}_{t_i}) - C^\beta_{t_k}, \quad \P\text{-a.s.}
$$
Recall that $Z^{\beta}$ is a $\P^{\beta}$-martingale and $C^\beta_0=0$.
In addition, because $C^\beta$ is orthogonal, it is again a $\P^{\beta}$-martingale.
Hence, taking expectations with respect to  $\E^{\P^{\beta}}$ in the previous equality, we deduce that
\begin{equation}\label{eq weak cost and BSDE}
    \E[\cY^\beta_{0}] = \E^{\P^{\beta}}\bigg[G(Y_{t_k}, m_{t_k}) + \sum_{j=i}^{k-1}L(Y_{t_j}, \beta_{t_j}, m_{t_j})\delta_k \bigg]. 
\end{equation}

The following proposition characterizes discrete-time optimal controls in terms of a related BS$\Delta$E.
\begin{proposition}
\label{prop:Hamilton}
Let the flow of measures $(m_{t_i})_{i=0,\dots,k}$ be fixed.
If Assumption \ref{Ass:Compact.result} holds, then an open-loop control $(\alpha_{t_i})_{i=0,\dots,k} \in \mathcal A^k_O$ is  optimal for $\mathcal J ^k_m$ if and only if 
  \begin{equation}
  \label{eq:alpha.argmin}
    \alpha_{t_i} \in \argmin_{a\in A}h(Y_{t_i}, a, m_{t_i}, \cZ_{t_i}) \quad \text{for all}\quad  i=0,\dots,k,
  \end{equation}
   where the tuple $(\cY_{t_i}, \cZ_{t_i}, C_{t_i})_{i=0,\dots,k}$
  solves the discrete-time equation
  \begin{equation}
  \label{eq:BSDeltaE} 
    \cY_{t_i} = G(Y_{t_k}, m_{t_k}) + \sum_{j=i}^{k-1}H(Y_{t_j},  m_{t_j}, \cZ_{t_j})\delta_k - \sum_{j=i}^{k-1}\cZ_{t_j}(Z^k_{t_{j+1}} - Z^k_{t_j}) -\big(C_{t_k}- C_{t_i} \big) \quad \P\text{-a.s.}
  \end{equation}
  for all $i=0,\dots,k.$
  Moreover,  we have $ \E [ \cY_0]= \cV^m : = \inf_{\beta \in \cA_M^k} \cJ^k_m(\beta) = \inf_{\beta \in \cA_O^k} \cJ^k_m(\beta)$. 
\end{proposition}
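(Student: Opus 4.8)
The plan is to read optimality off from a comparison between, on one hand, the \emph{linear} BS$\Delta$E \eqref{eq:beta.bsde} associated with an arbitrary open-loop control $\beta\in\cA^k_O$, and, on the other, the \emph{nonlinear} BS$\Delta$E \eqref{eq:BSDeltaE} driven by the optimized Hamiltonian $H$. First I would record well-posedness: by Remark \ref{Ass:Charac.gen} the driver $z\mapsto H(x,m,z)$ is Lipschitz with the stated quadratic growth in $(x,m)$, so for $k$ large enough \cite[Theorem 9]{Brian-Dely-Mem02} yields a unique square-integrable $\F^k$-adapted triple $(\cY_{t_i},\cZ_{t_i},C_{t_i})_{i=0,\dots,k}$ solving \eqref{eq:BSDeltaE}, with $C$ an $\F^k$-martingale, $C_0=0$, orthogonal to $Z^k$. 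Next I would construct a candidate optimal control: since $A$ is compact and $a\mapsto h(x,a,m,z)$ is continuous, the infimum defining $H$ is attained, and a standard measurable-selection theorem gives a Borel map $a^\ast\colon\R^d\times\cP_2(\R^d)\times\R^d\to A$ with $h(x,a^\ast(x,m,z),m,z)=H(x,m,z)$; then $\alpha^\ast_{t_i}:=a^\ast(Y_{t_i},m_{t_i},\cZ_{t_i})$ is an admissible open-loop control.

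The key step is a linearization identity. For $\beta\in\cA^k_O$ let $(\cY^\beta,\cZ^\beta,C^\beta)$ solve \eqref{eq:beta.bsde}. Since $h$ is affine in its last argument, $h(x,a,m,z)-h(x,a,m,z')=(z-z')\sigma^{-1}a$, so subtracting \eqref{eq:BSDeltaE} from \eqref{eq:beta.bsde} at time $0$, collecting terms, and using $Z^\beta$ from \eqref{eq noise Girsanov} to absorb the drift, one gets, with $\Delta_{t_j}:=h(Y_{t_j},\beta_{t_j},m_{t_j},\cZ_{t_j})-H(Y_{t_j},m_{t_j},\cZ_{t_j})\ge0$,
\[
\cY^\beta_{0}-\cY_{0}=\sum_{j=0}^{k-1}\Delta_{t_j}\delta_k-\sum_{j=0}^{k-1}(\cZ^\beta_{t_j}-\cZ_{t_j})\bigl(Z^\beta_{t_{j+1}}-Z^\beta_{t_j}\bigr)-\bigl(C^\beta-C\bigr)_{t_k}.
\]
Taking $\E^{\P^\beta}[\cdot]$ and using that $Z^\beta$, $C^\beta$ and $C$ are $\P^\beta$-martingales (for $C^\beta,C$ this is orthogonality to $Z^k$, preserved under $\P^\beta$ exactly as in the computation preceding the proposition), the stochastic-sum terms drop out; combining with \eqref{eq weak cost and BSDE}, which identifies $\E[\cY^\beta_0]=\cJ^k_m(\beta)$, and with $\E^{\P^\beta}[\cY_0]=\E[\cY_0]$ (valid because $\cY_0$ is $\cF^k_0$-measurable and the density \eqref{eq:discrete.measu.change} is a $\P$-martingale started at $1$ conditionally on $\cF^k_0$), I obtain
\[
\cJ^k_m(\beta)-\E[\cY_0]=\E^{\P^\beta}\!\Bigl[\sum_{j=0}^{k-1}\Delta_{t_j}\delta_k\Bigr]\ \ge\ 0,
\]
with equality if and only if $\beta_{t_j}\in\argmin_{a\in A}h(Y_{t_j},a,m_{t_j},\cZ_{t_j})$ for every $j$, $\P$-a.s.

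From here the conclusion is short. For $\beta=\alpha^\ast$ one has $\Delta_{t_j}\equiv0$, so $\cJ^k_m(\alpha^\ast)=\E[\cY_0]\le\cJ^k_m(\beta)$ for all $\beta\in\cA^k_O$; hence $\alpha^\ast$ is optimal and $\E[\cY_0]=\inf_{\beta\in\cA^k_O}\cJ^k_m(\beta)$. The equivalence \eqref{eq:alpha.argmin} then follows from the last display: if $\alpha$ is optimal, $\E^{\P^\alpha}[\sum_j\Delta_{t_j}\delta_k]=\cJ^k_m(\alpha)-\E[\cY_0]=0$, forcing $\Delta_{t_j}\equiv0$, i.e.\ \eqref{eq:alpha.argmin}; conversely \eqref{eq:alpha.argmin} gives $\Delta_{t_j}\equiv0$ and thus $\cJ^k_m(\alpha)=\E[\cY_0]=\inf_{\cA^k_O}\cJ^k_m$, i.e.\ optimality. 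Finally, combining $\E[\cY_0]=\inf_{\cA^k_O}\cJ^k_m$ with Lemma \ref{lem:weak-strong} yields $\E[\cY_0]=\cV^m=\inf_{\beta\in\cA^k_M}\cJ^k_m(\beta)=\inf_{\beta\in\cA^k_O}\cJ^k_m(\beta)$.

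The hard part will be the rigorous justification that every stochastic-sum term above genuinely has vanishing $\P^\beta$-expectation: this needs enough integrability of $(\cZ,\cZ^\beta,C,C^\beta)$ under the equivalent but unbounded-density measure $\P^\beta$, which I would obtain from the higher-moment a priori estimates for BS$\Delta$Es with Lipschitz drivers, the finite fourth moment of $\xi$, and the Gaussian increments of $Z^k=W$, together with the fact that orthogonality to $Z^k$ persists under $\P^\beta$. A secondary (routine) technical point is stating the selection $a^\ast$ as a jointly Borel $A$-valued map; here compactness of $A$ and continuity of $h$ in $a$ suffice.
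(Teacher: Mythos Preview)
Your proposal is correct and the linearization identity you derive is exactly the right object; the argument is a valid proof of the proposition. The organization, however, differs from the paper's. The paper does not start from the nonlinear BS$\Delta$E \eqref{eq:BSDeltaE}: for the forward direction it takes an optimal $\alpha$, writes its \emph{linear} BS$\Delta$E $(\cY^\alpha,\cZ^\alpha,C^\alpha)$ from \eqref{eq:beta.bsde}, introduces an auxiliary control $\bar\alpha$ minimizing $a\mapsto h(Y_{t_j},a,m_{t_j},\cZ^\alpha_{t_j})$, and compares the two linear equations under the Girsanov measure $\Q$ built from $\bar\alpha$; the combination $\E[\cY^\alpha_0]\le\E[\cY^{\bar\alpha}_0]$ (optimality) and $\cY^\alpha_0\ge\cY^{\bar\alpha}_0$ (comparison) forces $h(Y,\alpha,m,\cZ^\alpha)=H(Y,m,\cZ^\alpha)$ pointwise, so that $(\cY^\alpha,\cZ^\alpha,C^\alpha)$ \emph{a posteriori} solves \eqref{eq:BSDeltaE}. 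The converse is then a separate comparison step.

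Your route---solve \eqref{eq:BSDeltaE} up front and derive a single gap formula $\cJ^k_m(\beta)-\E[\cY_0]=\E^{\P^\beta}[\sum_j\Delta_{t_j}\delta_k]$ valid for every $\beta$---packages both implications into one display and avoids introducing the auxiliary control $\bar\alpha$. The paper's route has the minor advantage that it never needs to invoke existence for the nonlinear equation \eqref{eq:BSDeltaE} directly (it is obtained as the linear equation of the optimal control), whereas you appeal to \cite[Theorem~9]{Brian-Dely-Mem02} for this. The technical point you flag---that the stochastic sums and the orthogonal martingales have vanishing $\P^\beta$-expectation---is the same in both proofs (the paper handles it by the identical Girsanov/orthogonality remark under $\Q$), and boundedness of $A$ makes it routine.
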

\begin{remark}
    The equation \eqref{eq:BSDeltaE} is usually called BS$\Delta$E or backward stochastic difference equation.
    It is well-studied in the literature, especially in the context of deriving Donsker-type approximations of (continuous-time) backward stochastic differential equations, see, e.g., \cite{Brian-Dely-Mem01,Brian-Dely-Mem02,Cher-Stad13,Coh-Elli12} and references therein.
    Just as classical BSDEs, unique solutions exist for Lipschitz-continuous generators with square integrable terminal conditions (i.e., the conditions in Remark \ref{Ass:Charac.gen}) see e.g.\ \cite[Theorem 9]{Brian-Dely-Mem02}, whereas solutions may not exist (or may not be unique) for generators of super-linear growth, cf.\ \cite{Cher-Stad13}.
    It is shown in \cite[Proposition 3.2 and Theorem 4.2]{Cher-Stad13} that if the generator is of (strictly) subquadratic growth, then a unique strong solution exists.
    However, this result requires the increment of $Z^k$ to be bounded, which is not satisfied in the current setting.
\end{remark}
\begin{proof}
    Let $\alpha \in  \mathcal A^k_O$ be optimal for $\mathcal J ^k_m$ and  let $(\cY_{t_i}^\alpha, \cZ_{t_i}^\alpha, C_{t_i}^\alpha)_{i=0,\dots,k}$ be the  solution of the BS$\Delta$E \eqref{eq:beta.bsde} related to $\alpha$.
  Let $\bar\alpha$ be an $A$-valued process such that
  \begin{equation*}
    H(Y_{t_j}, m_{t_j}, \cZ_{t_j}^\alpha) = h(Y_{t_j}, \bar\alpha_{t_j}, m_{t_j}, \cZ_{t_j}^\alpha),
  \end{equation*}
  and  let $(\cY_{t_i}^{\bar{\alpha}}, \cZ_{t_i}^{\bar{\alpha}}, C_{t_i}^{\bar{\alpha}})_{i=0,\dots,k}$ be the  solution of the BS$\Delta$E \eqref{eq:beta.bsde} related to ${\bar{\alpha}}$.
  By \eqref{eq weak cost and BSDE}, Lemma \ref{lem:weak-strong} and the optimality of $\alpha$, we have $\E[\cY_0^\alpha] = \cJ^k_m(\alpha) \le  \mathcal{J}^k_m(\bar\alpha) = \E[\cY^{\bar\alpha}_0]$.
  
  We claim that actually $\cY^{\alpha}_0 = \cY^{\bar\alpha}_0$ ~$\,\P$-a.s.
  To see this, let us put $\Delta \cY :=\cY^\alpha - \cY^{\bar\alpha}$, $\Delta \cZ := \cZ^\alpha - \cZ^{\bar\alpha}, \Delta C := C^\alpha- C^{\bar\alpha} $.
    With this notation, we have
  \begin{equation}\label{eq:lipschitz.discrete.bsde}
  \begin{aligned}
    \Delta \cY_{t_i} &= \sum_{j=i}^{k-1}\Big( \sigma^{-1} \bar \alpha _{t_j} \Delta \cZ_{t_j} + h(Y_{t_j}, \alpha_{t_j}, m_{t_j}, \cZ^\alpha_{t_j}) - H(Y_{t_j}, m_{t_j}, \cZ^\alpha_{t_j})\Big)\delta _k \\
    &\quad- \sum_{j=i}^{k-1}\Delta \cZ_{t_j}(Z^k_{t_{j+1}} - Z^k_{t_j}) - \big(\Delta C_{t_k} - \Delta C_{t_i}\big).
  \end{aligned}
  \end{equation}
Let us consider the probability measure $\Q$, given by
\begin{equation*}
  \frac{d\Q}{d\P} :=\exp\bigg(\sum_{j=0}^{k-1}\sigma^{-1} \bar \alpha _{t_j} (Z^k_{t_{j+1}} - Z^k_{t_j}) - \frac12\sum_{j=0}^{k-1}|\sigma^{-1} \bar \alpha _{t_j}|^2\delta_k \bigg).
\end{equation*} 
{Since $\bar \alpha$ is bounded, we can use Girsanov's theorem and the orthogonality of $(C_{t_i})_{i=0,\dots,k}$ to take  conditional expectation in \eqref{eq:lipschitz.discrete.bsde} with respect to $\Q$. 
This yields to
  \begin{align*}
    \Delta \cY_0 = \E^{\Q}\bigg[\sum_{j=0}^{k-1}\Big(h(Y_{t_j}, \alpha_{t_j}, m_{t_j}, \cZ^\alpha_{t_j}) - H(Y_{t_j}, m_{t_j}, \cZ^\alpha_{t_j})\Big)\delta_k \Big|\cF_0 \bigg].
  \end{align*}}
  Note that $\P$ and $\Q$ are equivalent on $\cF_0$. 
  {Since  $h(Y_{t_j}, \alpha_{t_j}, m_{t_j}, \cZ^\alpha_{t_j}) - H(Y_{t_j}, m_{t_j}, \cZ^\alpha_{t_j})\ge0$ $\P$-a.s. and thus $\Q$-a.s. for all $j\ge0$}, we get $\Delta \cY_0\ge0$.
  Therefore, we have $\Delta\cY_0=0$ as claimed.

 In particular, 
 if there is $j\ge0$, $\varepsilon>0$ and $B\in \cF_{t_k}$ with $\Q(B)>0$ such that $h(Y_{t_j}, \alpha_{t_j}, m_{t_j}, \cZ^\alpha_{t_j}) - H(Y_{t_j}, m_{t_j}, \cZ^\alpha_{t_j})\ge\varepsilon$ on $B$, then we have
    $0 = \E^{\Q}[\Delta \cY_0] \ge \varepsilon\Q(B)>0$, which is a contradiction.
    Thus,  $h(Y_{t_j}, \alpha_{t_j}, m_{t_j}, \cZ^\alpha_{t_j}) - H(Y_{t_j}, m_{t_j}, \cZ^\alpha_{t_j}) = 0$ for all $j\ge0$.
  This yields that $(\cY_{t_i}^\alpha, \cZ_{t_i}^\alpha, C_{t_i}^\alpha)_{i=0,\dots,k}$ solves \eqref{eq:BSDeltaE} and that \eqref{eq:alpha.argmin} holds. 

\medskip

  Let us now prove the converse.
  Let $(\cY_{t_i}, \cZ_{t_i}, C_{t_i})_{i=0,\dots,k}$ be a solution of \eqref{eq:BSDeltaE} and assume that the pair $(\alpha_{t_i})_{i=0,\dots,k}$ satisfies \eqref{eq:alpha.argmin}.
  Then, as observed above, we have $\cJ^k_m(\alpha) = \E[\cY_0]$ and for any other control $\beta$, we have $\cJ^k_m(\beta) = \E[\cY_0^\beta]$ where $(\cY^\beta,\cZ^\beta,C^\beta)$ solves the BS$\Delta$E \eqref{eq:beta.bsde}.
  Since we have $h(x,\beta_{t_j}, m_{t_j}, z) \ge H(x,m_{t_j},z)$, it follows by the comparison theorem for BS$\Delta$E (or rather following the argument leading to $\Delta\cY_0\ge0$ in the first part of the proof) that $\cY_0\le \cY^\beta_0$, which yields that $\alpha$ is optimal.  
   
\medskip 
  Finally, the identity $ \E [ \cY_0]= \cV^m : = \inf_{\beta \in \cA_M^k} \cJ^k_m(\beta)$ follows from \eqref{eq weak cost and BSDE} and from the fact that $\alpha$ is optimal, thus completing the proof.  
\end{proof}

\subsection{BSDE characterization of continuous-time MFGs}
 Similar to Subsection \ref{subsec:charc.MFE.mult.period}, where we discussed the weak cost function in discrete-time , let us also introduce the weak cost function in continuous-time  (see also Remark \ref{remark separate drift}). 
  We refer for instance to \cite{CarmonaLacker15,possamai2021non} for in-depth discussions of probabilistic weak formulations of MFGs. 
  Introduce the state process
  \begin{equation}
      \label{eq Y continuous-time }
      Y_t := \xi + \int_0^t b_0(Y_s,m_s) ds + \sigma W_t, \quad t \in [0,T].
  \end{equation}
   Given any admissible control $\beta \in \cA_M$ and a continuous measure flow $(m_t)_t$, let us introduce the equivalent probability measure $\P^{\beta}$ with density
  \begin{equation*}
    \frac{d\P^{\beta}}{d\P} :=\exp\bigg(\int_0^T\sigma^{-1}  \beta_t(Y_t) dW_t - \frac12\int_0^T|\sigma^{-1}  \beta_t (Y_t) |^2dt\bigg).
  \end{equation*}
  The process $W^{\beta}$, defined as
  \begin{equation}
      \label{eq BM wrt alpha continuous-time }
      W^{\beta} _t := W_t - \int_0^t \sigma^{-1}   \beta _s (Y_s)  ds, \quad t \in [0,T], 
  \end{equation}
  is a $\P^{\beta}$-Brownian motion, by Girsanov's theorem.
   The weak cost function in continuous-time is
  \begin{equation*}
    \cJ_m(\beta) := \E^{\P^{\beta}}\bigg[\int_0^T L(Y_t, \beta _t(Y_t), m_t)dt + G(Y_T,m_T)\bigg].
  \end{equation*}
  Thanks to Girsanov's theorem, it follows as in Lemma \ref{lem:weak-strong} that any MFE $(\beta,m)$ for the continuous-time MFG is such that $\beta$ minimizes $\cJ_m$ and $m$ satisfies $\P^{\beta}\circ Y_t^{-1} = m_t$ for all $t\in [0,T]$.
Moreover, the MFE minimizes the Hamiltonian when $L$ is convex. Slightly more complicated than the discrete case, this characterization utilizes the mimicking theorem and the strong solvability of the state process SDE when the drift is a bounded measurable function. See \cite[Lemma 3.9]{tangpiwang2024malliavin} and \cite[Section 7]{CarmonaLacker15} for more details.
  Specifically, 
  \begin{equation}\label{eq:cont.time.weak.minimizer}
    \beta_t \in \argmin_{a\in A}h(Y_t,a, m_t,\cZ_t) \quad \P\otimes dt\text{-a.s.},
  \end{equation}
  where $(\cY, \cZ)$ and an adapted solution of the equation
  \begin{equation}
  \label{eq:BSDE.cont}
    \cY_t = G(Y_T,m_T) + \int_t^T H(Y_s, m_s, \cZ_s)ds -\int_t^T\cZ_sdW_s,\quad \P\text{-a.s.}
  \end{equation}
 and 
 \begin{equation}
     \E^{\P^{\beta}}[\cY_0] = \cV^m := \inf_{\beta \in \cA_M} \cJ_m(\beta).
 \end{equation}
 {We also note that for $\beta^k \in \cA^k_M$ and its piecewise-constant, continuous-time interpolation $\widehat \beta^k$, the probability measure $\P^{\widehat\beta^k}$ agrees with the discrete version $\mathbb{P}^{\beta^k}$ defined by \eqref{eq:discrete.measu.change} as $Z^k$ is the discretized Brownian motion.}

\subsection{Stability of equilibria: from BS$\Delta$Es to BSDEs}
The characterization proved in Proposition \ref{prop:Hamilton}  allows us to elaborate on the convergence result of Theorem \ref{thm:compactness} in terms of Donsker-type results for BSDEs. 
{To make explicit the dependence on $k$, we denote by $(Y^k_{t_i})_{i = 0, \dots, k-1}$ the process described in \eqref{eq:driftless}. 
We also define its continuous time interpolation $\widehat Y^k_t:=\sum_{i=0}^{k-1}Y^k_{t_i}\mathbf{1}_{[t_i, t_{i+1})}(t)$.}

\begin{theorem}
  Let Assumptions \ref{Ass:Compact.result}  be satisfied.
  For each $k \geq 1$, let $(\alpha^k_{t_i},m^k_{t_i})_{i=0,\dots, k}$ be an MFE for the $k$-period game and $\cV^k = \inf_{\beta \in \cA^k_M}J^k_{m^k}(\beta)$ the associated value function.
  Then, up to a subsequence, $(\cV^k)_{k\ge1}$ converges to $\cV^m:=\inf_{\beta\in \cA_M}J_m (\beta)$, the value function associated to some continuous-time equilibrium $(\alpha,m)$.
\end{theorem}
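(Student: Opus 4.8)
The plan is to combine the qualitative convergence of Theorem~\ref{thm:compactness} with the backward-equation characterizations of the values given in Proposition~\ref{prop:Hamilton} and in \eqref{eq:cont.time.weak.minimizer}--\eqref{eq:BSDE.cont}. First I would apply Theorem~\ref{thm:compactness}: after passing to a subsequence (not relabeled), $\widehat m^k_t\to m_t$ in $\cW_2$ for every $t\in[0,T]$, the limit flow $t\mapsto m_t$ is continuous, and there is a Markovian control $\alpha\in\cA_M$ such that $(\alpha,m)$ is an MFE of the continuous-time game; in particular $\alpha$ is optimal for $J_m$, so $J_m(\alpha)=\cV^m:=\inf_{\beta\in\cA_M}J_m(\beta)$. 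I would also keep the uniform moment bound \eqref{eq a priori estimate controls state measure} at hand.

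Next I would rewrite both values in terms of backward equations. By \eqref{eq weak cost vs strong cost}, Lemma~\ref{lem:weak-strong} and Proposition~\ref{prop:Hamilton}, one has $\cV^k=\inf_{\beta\in\cA^k_M}J^k_{m^k}(\beta)=\E[\cY^k_0]$, where $(\cY^k,\cZ^k,C^k)$ solves the BS$\Delta$E \eqref{eq:BSDeltaE} with terminal datum $G(Y^k_{t_k},m^k_{t_k})$ and generator $H(\cdot,m^k_{t_j},\cdot)$ along the uncontrolled process $Y^k$ of \eqref{eq:driftless}; and by \eqref{eq:cont.time.weak.minimizer}--\eqref{eq:BSDE.cont}, $\cV^m=\E[\cY_0]$, where $(\cY,\cZ)$ solves the BSDE \eqref{eq:BSDE.cont} with terminal datum $G(Y_T,m_T)$ and generator $H(\cdot,m_s,\cdot)$ along the uncontrolled state $Y$ (here $\cY_0$ is $\cF_0$-measurable and $\P^{\beta}=\P$ on $\cF_0$). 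Thus it suffices to show $\E[\cY^k_0]\to\E[\cY_0]$ along the subsequence.

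The core of the argument is then a Donsker-type stability estimate for the BS$\Delta$E \eqref{eq:BSDeltaE}. Since in this section $Z^k_{t_i}=W_{t_i}$, the discrete and continuous problems are driven by the same Brownian motion, so the only discrepancies are the time discretization, $Y^k$ versus $Y$, and $m^k$ versus $m$. A routine Euler-scheme/Gronwall estimate --- using that $b_0$ is Lipschitz with linear growth, that $\widehat m^k_s\to m_s$ in $\cW_2$ together with \eqref{eq a priori estimate controls state measure} and dominated convergence --- gives $\E[\sup_{t\le T}|\widehat Y^k_t-Y_t|^2]\to 0$; combined with the local-Lipschitz and quadratic-growth bounds of Remark~\ref{Ass:Charac.gen}, this yields $G(Y^k_{t_k},m^k_{t_k})\to G(Y_T,m_T)$ in $L^2$ and convergence of the generators $H(\widehat Y^k_{\eta^k(s)},\widehat m^k_{\eta^k(s)},\cdot)\to H(Y_s,m_s,\cdot)$ in the appropriate $L^1$-in-time sense. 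Feeding these into the a priori $L^2$-stability estimates for backward stochastic difference equations with uniformly Lipschitz generators and square-integrable data (cf.\ \cite{Brian-Dely-Mem01,Brian-Dely-Mem02,Cher-Stad13,Coh-Elli12}, in particular \cite[Theorem~9]{Brian-Dely-Mem02}), by comparing $(\cY^k,\cZ^k,C^k)$ with the discrete scheme associated with the limiting data and invoking the classical $L^2$-path-regularity estimate for $\cZ$, one obtains $\E[|\cY^k_0-\cY_0|^2]\to 0$, hence $\cV^k\to\cV^m$. Finally $\cV^m=J_m(\alpha)$ is by construction the value of the continuous-time MFE $(\alpha,m)$, which finishes the proof.

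I expect the last step to be the main obstacle: producing a clean Donsker-type convergence of the BS$\Delta$E \eqref{eq:BSDeltaE} to the BSDE \eqref{eq:BSDE.cont} when the generator and terminal condition carry the moving measure flow $m^k$, and controlling the orthogonal martingale part $C^k$, which has no continuous-time counterpart. A more elementary alternative that sidesteps the backward equations is to sandwich: \eqref{eq lowere semicont limits} already gives $\cV^m=J_m(\alpha)\le\liminf_k\cV^k$, while discretizing along the grid a Lipschitz control that is $\varepsilon$-optimal for $J_m$ and using \eqref{eq limits beta discretized} together with the mollification argument of Step~4b gives $\limsup_k\cV^k\le\cV^m$; the BSDE route is nonetheless the one that connects to the sharp convergence rate established in the next theorem.
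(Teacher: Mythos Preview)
Your proposal is correct and follows essentially the same route as the paper: pass to a subsequence via Theorem~\ref{thm:compactness}, represent both $\cV^k$ and $\cV^m$ through the backward equations \eqref{eq:BSDeltaE} and \eqref{eq:BSDE.cont}, and invoke a Donsker-type $L^2$-stability result for BS$\Delta$Es to conclude $\E[\widehat\cY^k_0]\to\E[\cY_0]$. One small correction: the relevant reference in \cite{Brian-Dely-Mem02} is Theorem~12 (the convergence theorem), not Theorem~9 (existence/uniqueness); once you cite the right statement, your worry about the orthogonal martingale $C^k$ disappears, since that theorem directly delivers $\E\big[\sup_{t\le T}|\widehat\cY^k_t-\cY_t|^2\big]\to 0$ without a separate treatment of $C^k$.
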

\begin{proof}
 
By Theorem \ref{thm:compactness}, if for each $k$ the pair $(\alpha^k_{t_i},m^k_{t_i})_{i=0,\dots, k}$ is an MFE for the $k$-period game, then there is a subsequence again denoted $(\alpha^k_{t_i},m^k_{t_i})_{i=0,\dots, k}$ such that the interpolation $(\widehat m^k)_k$ converges to $m$ in $\cW_2$, where $(\alpha, m)$ is an MFE for the continuous-time game for some $\alpha$. 
{Let $(\cY^k_{t_i}, \cZ^k_{t_i}, C^k_{t_i})_{i=0,\dots,k}$ denote the unique solution to the BS$\Delta$E \eqref{eq:BSDeltaE}, and $(\cY, \cZ)$ the unique solution to the BSDE \eqref{eq:BSDE.cont}.} 

Define the continuous-time interpolation $\widehat \cY^k$ of $\cY^k$ as
$$
\widehat \cY^k_t:=\sum_{i=0}^{k-1}\cY^k_{t_i}\mathbf{1}_{[t_i, t_{i+1})}(t),\quad t \in [0,T], \quad \widehat \cY ^k_T := \cY^k_{t_k}.
$$
Since the Hamiltonian $H$ is $\cW_2$-continuous in $m$, the sequence of functions $H^k(t,z):=H(\widehat Y^k_t,\widehat m^k_t,z)$ converges to $H(Y_t,m_t,z)$ pointwise. 
Similarly, $G^k:= G(\widehat Y^k_T, \widehat m^k_T)$ converges to $G(Y_T,m_T)$. Therefore, by \cite[Theorem 12]{Brian-Dely-Mem02}, it holds
\begin{equation*}
    \E\Big[\sup_{t\in[0,T]}|{\widehat \cY^k_t} - \cY_t|^2\Big] \to 0,  
\end{equation*}
as $k$ goes to infinity.
Since $\E^{\P^{\alpha^k}}[{\widehat \cY^k_0}]$ is the value function associated to the MFE $(\alpha^k_{t_i},m^k_{t_i})_{i=0,\dots, k}$ and $\E^{\P^{\alpha}}[\cY_0]$ is the value of the continuous-time game associated to $(\alpha,m)$, this concludes the proof in the light of the fact that $\P^{\alpha^k}$ and $\P^{\alpha}$ agree on $\cF_0$. 
\end{proof}

\subsection{Convergence rates under additional  Lasry-Lions monotonicity}
In this subsection, we use the characterization of Proposition \ref{prop:Hamilton} in order to obtain a convergence rate under the Lasry-Lions monotonicity condition.
To this end, we enforce the following sufficient conditions.
\begin{assumption}
  \label{ass.rate.nonCLL}
    \item[(i)] Assumption \ref{Ass:Compact.result} holds.
    \item[(ii)] $b_0$ does not depend on $m$ and Assumption \ref{ass:LL.condition} holds with $F,G$ satisfying the Lasry-Lions monotonicity (cf.\ Definition \ref{Def:LL}); 
    \item[(iii)] There is a constant $C_h>0$ such that the reduced Hamiltonian $h_0(x,a,z):=L_0(x,a) + z \sigma^{-1} a $ is $C_h$-strongly convex; i.e., for every $x,z$, the function $a\mapsto h_0(x,a,z) -\frac{C_h}{2}|a|^2$ is convex.
\end{assumption}

Note that using the growth condition of $b_0$ and Gronwall's inequality, one can obtain the approximation estimate
 \begin{equation}
     \label{eq estimate Yk to Y rate}
  \begin{aligned}
    \E\bigg[\sup_{t \in [0,T] } |Y_t-\widehat Y^k_t|^2 \bigg] & \le C\delta_k, \quad 
    \E\bigg[\sup_{t \in [0,T] }( |Y_t-\widehat Y^k_t|^4 + |Y_t-\widehat Y^k_{\eta^k(t)} |^4) \bigg]  \le C\delta_k^2, 
      \end{aligned}
  \end{equation}
  that will be used several times in the sequel.

The following theorem is the main result of this section, establishing a rate of convergence for the discretization of MFGs.
\begin{theorem}
\label{thm:BSDE.rate.LL}
  Let Assumption \ref{ass.rate.nonCLL} be satisfied.
  For each $k\ge1$, let $(\alpha^k_{t_i}, m^k_{t_i})_{i=0,\dots,k}$ be the MFE for the $k$-period MFG with data $(\xi,W, b, L, G,k)$, with  associated state process $ X^k$. 
  Denote by $\widehat X^k$ the continuous-time interpolation as in \eqref{eq:cont.time.approx} and define the continuous-time interpolation feedback  $\widehat \alpha^k _t (x) := \sum _{i=0}^{k-1} \alpha^k _{t_i} (x) \mathbf 1 _{[t_i,t_{i+1})}(t)$, $x \in \R^d$.
  Let $(\alpha,m)$ be the MFE for the continuous-time MFG, with $X$ denoting the associated state process. 
  If, for a constant $K_\alpha\geq0$ one has 
  $$
  |\alpha _{\bar t} (\bar x) - \alpha _{ t} ( x)| \leq K_\alpha \big( |\bar t - t|^{\frac12} + |\bar x - x| \big), 
  \quad  \text{for all} \quad \bar t, t \in [0,T], \ \bar x,x \in \R^d ,
  $$ 
  then we have the rates of convergence
    \begin{equation}
    \label{eq:estima.alpha.LL}
      \Big(\E^{\P^{\alpha}} + \E^{\P^{\alpha^k}}\Big)\bigg[\int_0^T|\widehat\alpha_t^k( \widehat Y _t^k) - \alpha_t(Y_t)|^2dt\bigg] \le \frac{C}{\sqrt{k}},
  \end{equation}  
 and
    \begin{equation}
    \label{eq:estima.X.LL}
      \bigg( \E\Big[\sup_{t\in [0,T]}|\widehat X^k_t - X_t|^{2} + \int_0^T|\widehat \alpha^k_t(\widehat X^k_t) -\alpha_t(X_t) |^2dt\Big] \bigg)^{\frac 12} \le \frac{C}{\sqrt{k}},
  \end{equation}
  for all $k$ and for some constant $C>0$ that does not depend on $k$.
\end{theorem}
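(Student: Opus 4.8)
My plan is to exploit the BS$\Delta$E/BSDE characterizations of Proposition \ref{prop:Hamilton} and its continuous-time analogue, together with a Lasry-Lions monotonicity argument, to control the gap between the discrete and continuous equilibrium controls. The key structural fact is that, thanks to Assumption \ref{ass.rate.nonCLL}(iii), the optimized Hamiltonian $H$ inherits a one-sided Lipschitz/strong-concavity estimate in $z$ along minimizers: if $a^*(x,m,z)$ and $a^*(x,m,z')$ are the (unique) minimizers of $h(x,\cdot,m,z)$ and $h(x,\cdot,m,z')$, then $C_h|a^*(x,m,z)-a^*(x,m,z')|^2 \le (z-z')\sigma^{-1}(a^*(x,m,z')-a^*(x,m,z))$, hence $|a^*(x,m,z)-a^*(x,m,z')| \le C_h^{-1}|\sigma^{-1}||z-z'|$. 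This reduces the problem of estimating $|\widehat\alpha^k_t(\widehat Y^k_t)-\alpha_t(Y_t)|$ to estimating (a) the difference of the adjoint processes $|\widehat\cZ^k_t - \cZ_t|$ in an appropriate averaged sense, and (b) the discretization error of the state $|\widehat Y^k_t - Y_t|$, for which \eqref{eq estimate Yk to Y rate} already gives the rate $\sqrt{\delta_k}$.

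The heart of the argument is the coupled estimate for the $\cZ$-component. I would proceed as follows. First, I write out the difference of the two driver equations --- the BS$\Delta$E \eqref{eq:BSDeltaE} for $(\cY^k,\cZ^k,C^k)$ and the BSDE \eqref{eq:BSDE.cont} for $(\cY,\cZ)$ --- and use the standard Donsker-type stability estimates for BS$\Delta$Es (as in \cite[Theorem 12]{Brian-Dely-Mem02}), but now tracking the rate: because $H$ is Lipschitz in $(x,z)$, $G$ and $H$ satisfy the regularity in Remark \ref{Ass:Charac.gen}, and the terminal/coefficient discretization errors are of order $\sqrt{\delta_k}$ by \eqref{eq estimate Yk to Y rate}, one obtains $\E[\sup_t|\widehat\cY^k_t-\cY_t|^2] + \E[\sum_i|\cZ^k_{t_i}-\cZ_{t_i}|^2\delta_k] \le C\delta_k$ (here $\cZ_{t_i}$ should be read as a suitable $L^2$-projection/average of $\cZ$ over $[t_i,t_{i+1}]$, and the time-regularity of $\cZ$ in the $L^2$-sense --- itself of order $\sqrt{\delta_k}$, from Zhang's path regularity for BSDEs --- absorbs the replacement error). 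Combining this with the strong-convexity inequality above and with \eqref{eq estimate Yk to Y rate}, and using that $\alpha$ is $K_\alpha$-Hölder-$1/2$/Lipschitz, yields \eqref{eq:estima.alpha.LL} after taking expectations under both $\P^\alpha$ and $\P^{\alpha^k}$ (note these measures are equivalent to $\P$ with $L^2$-bounded densities, uniformly in $k$, since $A$ is bounded, so we may freely switch measures at the cost of a constant).

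For \eqref{eq:estima.X.LL}, I would plug the control-rate \eqref{eq:estima.alpha.LL} back into the state dynamics. The interpolated discrete state $\widehat X^k$ solves \eqref{eq:cont.time.approx} with drift $\widehat\alpha^k_t(\widehat X^k_{\eta^k(t)}) + b_0(\widehat X^k_{\eta^k(t)})$ (using that $b_0$ does not depend on $m$ here), and $X$ solves the continuous SDE with drift $\alpha_t(X_t)+b_0(X_t)$. Writing the difference, using Lipschitzness of $b_0$, the Lipschitz/Hölder regularity of $\alpha$, the Euler-scheme error $|\widehat X^k_t - \widehat X^k_{\eta^k(t)}|$ of order $\sqrt{\delta_k}$, and Gronwall, one controls $\E[\sup_t|\widehat X^k_t - X_t|^2]$ in terms of $\int_0^T \E|\widehat\alpha^k_t(X_t) - \alpha_t(X_t)|^2\,dt$. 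The latter is then bounded by the $\cW_2$-convergence of $\widehat m^k$ to $m$ combined with \eqref{eq:estima.alpha.LL} evaluated along $\widehat Y^k$ versus $Y$ --- here the key point is that the Girsanov change of measure relating $\P^{\alpha^k}$-law of $\widehat Y^k$ to the $\P$-law of $\widehat X^k$ (cf.\ \eqref{eq:weak-solution}) lets us transfer the feedback-gap estimate from the uncontrolled weak dynamics to the controlled strong dynamics. The term $\int_0^T|\widehat\alpha^k_t(\widehat X^k_t)-\alpha_t(X_t)|^2dt$ in \eqref{eq:estima.X.LL} then follows from the triangle inequality, splitting into $|\widehat\alpha^k_t(\widehat X^k_t) - \widehat\alpha^k_t(X_t)|^2 \le K_\alpha^2|\widehat X^k_t - X_t|^2$ plus the transferred $L^2$ control-gap.

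**Main obstacle.** The delicate point is the quantitative BS$\Delta$E stability estimate with the sharp $\sqrt{\delta_k}$ rate: one must carefully handle the $L^2$-time-regularity of the continuous adjoint process $\cZ$ (a Zhang-type path-regularity bound is needed, and its constant depends on the Lipschitz constants in Remark \ref{Ass:Charac.gen}), the orthogonal martingale part $C^k$ which has no continuous counterpart, and the fact that the increments of $Z^k$ are Gaussian hence unbounded --- so the subquadratic-growth BS$\Delta$E theory of \cite{Cher-Stad13} does not directly apply and one must stay within the Lipschitz framework of \cite{Brian-Dely-Mem02}, which is fortunately enough here because $H$ is genuinely Lipschitz in $z$ (uniformly, since $A$ is bounded). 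Propagating monotonicity correctly through the coupled (mean-field) fixed point --- i.e., using that $F$ and $G$ are Lasry-Lions monotone to close the estimate on $\widehat m^k - m$ rather than merely assuming it --- is the second subtle ingredient, and is where Assumption \ref{ass.rate.nonCLL}(ii) enters.
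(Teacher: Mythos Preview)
Your proposed route has a genuine circularity that you flag but do not resolve. The BS$\Delta$E \eqref{eq:BSDeltaE} has driver $H(Y^k_{t_j},m^k_{t_j},\cZ^k_{t_j})$ and terminal $G(Y^k_{t_k},m^k_{t_k})$, whereas the BSDE \eqref{eq:BSDE.cont} has $H(Y_s,m_s,\cZ_s)$ and $G(Y_T,m_T)$. Under the separable structure of Assumption \ref{ass:LL.condition} one has $H(x,m,z)=H_0(x,z)+F(x,m)$, so a Briand--Delyon--M\'emin stability estimate for $\widehat\cZ^k-\cZ$ would contain, on its right-hand side, terms of the form $|F(\widehat Y^k_t,\widehat m^k_t)-F(Y_t,m_t)|$ and $|G(\widehat Y^k_T,\widehat m^k_T)-G(Y_T,m_T)|$, and hence $\cW_2(\widehat m^k_t,m_t)$. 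But $\cW_2(\widehat m^k_t,m_t)$ is controlled by $\|\widehat X^k-X\|$, which in turn depends on the control gap you are trying to bound. The estimate \eqref{eq estimate Yk to Y rate} handles only $|Y-\widehat Y^k|$, not the equilibrium measures. You say Lasry--Lions monotonicity is ``the second subtle ingredient'' needed ``to close the estimate on $\widehat m^k-m$'', but you give no mechanism by which it would enter your $\cZ$-stability argument; in fact it cannot, because BS$\Delta$E stability is a linear comparison insensitive to signs.

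The paper breaks this circularity in a completely different way: it never estimates $|\widehat\cZ^k-\cZ|$ or $\cW_2(\widehat m^k,m)$. Instead it writes the single scalar $J_m(\alpha)-J^k_{m^k}(\alpha^k)$ in two ways --- once under $\P^\alpha$ using the BS$\Delta$E for $\cY^k$, once under $\P^{\alpha^k}$ using the BSDE for $\cY$ --- and subtracts. In the resulting identity (which equals zero), the measure-dependent pieces regroup exactly into the Lasry--Lions expressions $\int(G(x,m)-G(x,\widehat m^k))(m-\widehat m^k)(dx)$ and $\int(F(x,m)-F(x,\widehat m^k))(m-\widehat m^k)(dx)$, which are nonnegative and can be \emph{discarded}. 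What remains is (i) the strong-convexity lower bound $h_0(\widehat Y^k_t,a,\widehat\cZ^k_t)-h_0(\widehat Y^k_t,\widehat\alpha^k_t(\widehat Y^k_t),\widehat\cZ^k_t)\ge \tfrac{C_h}{4}|a-\widehat\alpha^k_t(\widehat Y^k_t)|^2$ applied at $a=\alpha_t(Y_t)$ (and symmetrically), yielding directly the left side of \eqref{eq:estima.alpha.LL}; and (ii) error terms $E^k_1,E^k_2$ involving only $|Y-\widehat Y^k|$ (hence $\sqrt{\delta_k}$) plus $\E^{\P^\alpha}[\widehat C^k_T]$, which is handled by discretizing $\alpha$ and using the orthogonality of $C^k$ to $Z^k$ --- this last step is precisely where the H\"older/Lipschitz regularity of $\alpha$ is used. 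Your use of strong convexity (comparing minimizers at different $z$'s) forces you through $|\widehat\cZ^k-\cZ|$; the paper's use (suboptimality at the \emph{same} $z$) avoids it.

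A secondary issue: in your argument for \eqref{eq:estima.X.LL} you write $|\widehat\alpha^k_t(\widehat X^k_t)-\widehat\alpha^k_t(X_t)|\le K_\alpha|\widehat X^k_t-X_t|$, but the regularity hypothesis is on $\alpha$, not on $\widehat\alpha^k$. The paper splits instead as $|\alpha_t(X_t)-\alpha_t(\widehat X^k_t)|+|\alpha_t(\widehat X^k_t)-\widehat\alpha^k_t(\widehat X^k_t)|$, uses Lipschitzness of $\alpha$ on the first term, and transfers the second via $\P^{\alpha^k}\circ(\widehat Y^k)^{-1}=\P\circ(\widehat X^k)^{-1}$ back to \eqref{eq:estima.alpha.LL}.
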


\begin{remark}
    It is shown for instance in \cite{Card17,chassagneux2014probabilistic} that under the Lasry-Lions monotonicity condition, and additional regularity assumptions on the coefficients, the optimal control $\alpha$ is differentiable in space.
    Alternatively, the Lipschitzianity of $\alpha$ can be shown by studying the regularity of the related forward-backward system of SDEs:
    For explicit conditions, we refer to Assumption 5.b in \cite{dianetti2022strong} (in combination with Remark 2.3 and the proof of Lemma 2.5 therein). Similarly, the Hölder continuity in time of the feedback control is also a standard consequence of other regularity conditions on the coefficients. See \cite[Remark 1.12]{achdou2021mfgbook} for a discussion. For alternative conditions, see also Remark \ref{remark:alternativeconditions}.
\end{remark}

\begin{proof}
We prove the two estimates in the statements separately. 
\smallbreak\noindent
\emph{Proof of \eqref{eq:estima.alpha.LL}.} The proof is divided into three steps.
\\ \noindent
\emph{Step 1.} In this step, we rewrite the costs $ J _{m^k}^k (\alpha^k)$ and $J _{m} (\alpha)$ in a suitable way. 

Recall the uncontrolled process $ Y^k $ defined in \eqref{eq:driftless} and the continuous-time interpolations $ \widehat Y^k $, $ \widehat m^k$ and $\widehat \alpha^k$ (as in the statement).
Using Proposition \ref{prop:Hamilton}, we have that the optimal control $\alpha^k_{t_i}(Y^k_{t_i})$ satisfies
\begin{equation*}
    \alpha_{t_i}^k (Y^k_{t_i}) \in \argmin_{a\in A}h(Y_{t_i}^k, a, m_{t_i}^k, \cZ^k_{t_i}), \quad  i=0,\dots,k,
  \end{equation*}
   where the tuple $(\cY^k_{t_i}, \cZ^k_{t_i}, C^k_{t_i})_{i=0,\dots,k}$
  satisfies
  \begin{equation*}
    \cY^k_{t_i} = G(Y_{t_k}^k, m^k_{t_k}) + \sum_{j=i}^{k-1}H(Y^k_{t_j},  m^k_{t_j}, \cZ ^k_{t_j})\delta_k - \sum_{j=i}^{k-1}\cZ^k_{t_j}(W_{t_{j+1}} - W_{t_j}) -\big( C_{t_k}^k - C_{t_i}^k\big).
  \end{equation*}
  Using the definition of $W^\alpha$ in \eqref{eq BM wrt alpha continuous-time }, we rewrite the previous BS$\Delta$E for the continuous-time interpolation $( \widehat\cY ^k ,  \widehat \cZ ^k,  \widehat C^k )$ as
  \begin{equation*}
  \begin{aligned}
           \widehat \cY^k_{0} =& G( \widehat Y_{T}^k,  \widehat m^k_{T}) + \int_0^T \big( H(\widehat Y^k_{s}, \widehat m^k_{s}, \widehat \cZ^k_{s}) +  \widehat \cZ^k_{s} \sigma^{-1}  \alpha_s (Y_s) \big)d s  - \int_0^T \widehat \cZ^k_{s} dW^\alpha_s  -  \widehat C_{T}^k.
    \end{aligned}
  \end{equation*}
  Hence, using that $\P^{\alpha^k}$ and $\P^\alpha$ agree on $\cF_0$ and that $W^\alpha$ is a $\P^\alpha$-Browian motion, the optimal cost of the $k$-period game takes the form

\begin{align*}
    J^k_{m^k}(\alpha^k) &= \E^{\P^{\alpha^k}} [ \cY_0^k ] =  \E^{\P^{\alpha}} [ \widehat \cY_0^k ]\\
    & =\E^{\P^{\alpha}}\bigg[ \int_0^TL(\widehat Y^k_{s}, \widehat\alpha^k_{s}(\widehat Y ^k_{s}), \widehat m^k_{s})ds -  \widehat C_{T}^k\\
    &\qquad \quad + \int_0^{T}  \widehat \cZ^k_{s} \sigma^{-1}\big(  \alpha_s(Y_s) -  \widehat\alpha^k_{s}(\widehat Y^k_{s} )\big)ds + G(\widehat Y^k_{T}, \widehat m^{k}_T) \bigg].
  \end{align*}
  In a similar way, by manipulating the BSDE for $(\cY,\cZ)$ in \eqref{eq:BSDE.cont}, the value of the continuous-time game can be written as
  \begin{align*}
    J_m(\alpha)  & = \E^{\P^{\alpha^k}}\bigg[\int_0^TL(Y_s,\alpha_s(Y_s), m_s)ds\\
    &\qquad \quad + \int_0^T\cZ_{s} \sigma^{-1}\big(  \alpha_s(Y_s)- \widehat\alpha^k_{s}(\widehat Y^k_{s}) \big) ds + G(Y_T, m_T) \bigg].
  \end{align*}
  Therefore, we can compute the difference $J_m(\alpha) - J^k_{m^k}(\alpha^k)$ under $\P^\alpha$ and $\P^{\alpha^k}$, obtaining
  \begin{align*}
    &J_m (\alpha) - J^k_{m^k}(\alpha^k)\\
    & = \E^{\P^{\alpha}}\bigg[\int_0^T L( Y_t, \alpha_t(Y_t), m_t) - L( \widehat Y^k_{t}, \widehat\alpha^k_{t}( \widehat Y^k_{t}), \widehat m_t^k)dt\\
    &\quad +  \int_0^T\widehat \cZ_{t}^k \sigma^{-1}\big(  \alpha_s(Y_s) - \widehat\alpha^k_{s}(  \widehat Y^k_{s}) \big)ds + G(Y_T, m_T) - G( \widehat Y^k_{T}, \widehat m^{k}_T) \bigg] + \E^{\P^\alpha}[ \widehat C^k_{T}]
  \end{align*}
  and 
  \begin{align*}
    &J_m(\alpha) - J^k_{m^k}(\alpha^k)\\
      & = \E^{\P^{\alpha^k}}\bigg[ \int_0^TL(Y_t, \alpha_t(Y_t), m_t) - L(  \widehat Y^k_{t}, \widehat\alpha^k_{t}(  \widehat Y^k_{t}), \widehat m_t^k)dt\\
      &\quad + \int_0^T \cZ_{t} \sigma^{-1}\big(  \alpha_s(Y_s) -  \widehat\alpha^k_{s}(  \widehat Y^k_{s})  \big)ds + G(Y_T, m_T) - G(  \widehat Y^k_{T}, \widehat m^{k}_T) \bigg].
  \end{align*}
  Subtracting these two expressions and using the fact that the cost $L$ is separated gives
  \begin{equation}\label{eq:Before.LL}
  \begin{aligned}
    0 &= \big(J_m(\alpha) - J^k_{m^k}(\alpha^k)\big) - \big(J_m(\alpha) - J^k_{m^k}(\alpha^k)\big)\\
    & = \E^{\P^{\alpha}}\Big[G(Y_T, m_T) - G(\widehat Y_T^k, \widehat m_T^k) \Big] - \E^{\P^{\alpha^k}}\Big[G(Y_T, m_T) - G(\widehat Y_T^k, \widehat m_T^k) \Big]+ \E^{\P^\alpha}[ \widehat C^k_{T}]\\
    &+\E^{\P^{\alpha}}\bigg[\int_0^TF(Y_t, m_t) - F(\widehat Y_t^k, \widehat m_t^k)dt \bigg] - \E^{\P^{\alpha^k}}\bigg[\int_0^TF(Y_t, m_t) - F(\widehat Y_t^k, \widehat m_t^k)dt \bigg]\\
    &+ \E^{\P^{\alpha}}\bigg[\int_0^T\big( L_0(Y_t, \alpha_t(Y_t)) + \widehat\cZ^k_t \sigma^{-1}  \alpha_t(Y_t)  \big)\\
    &\qquad\qquad - \big( L_0(\widehat Y_t^k, \widehat\alpha^k_t( \widehat Y^k_t)) + \widehat\cZ^k_t \sigma^{-1}  \widehat\alpha_t^k(  \widehat Y_t^k)  \big)dt\bigg] \\
    & - \E^{\P^{\alpha^k}}\bigg[\int_0^T\big( L_0(Y_t, \alpha_t(Y_t)) + \cZ_t \sigma^{-1} \alpha_t(Y_t)  \big)\\
    &\qquad \qquad - \big( L_0(\widehat Y_t^k, \widehat\alpha^k_t( \widehat Y^k_t)) + \cZ_t \sigma^{-1}  \widehat\alpha_t^k(  \widehat Y_t^k)  \big)dt\bigg].
  \end{aligned}
  \end{equation}
  \smallbreak\noindent
\emph{Step 2.}
In this step, we further manipulate \eqref{eq:Before.LL} using the Lasry-Lions monotonicity of the data and the convexity of $h_0$.

  Observe that, using the Lasry-Lions monotonicity condition for $G$, it holds
  \begin{align*}
    &\E^{\P^{\alpha}}\Big[G(Y_T, m_T) - G(\widehat Y_T^k, \widehat m_T^k) \Big] - \E^{\P^{\alpha^k}}\Big[G(Y_T, m_T) - G(\widehat Y_T^k, \widehat m_T^k) \Big]\\
    &= \E^{\P^{\alpha}}\Big[G(Y_T, m_T) - G(Y_T, \widehat m^k_T)\Big] - \E^{\P^{\alpha^k}}\Big[G(\widehat Y^k_T, m_T) - G(\widehat Y^k_T, \widehat m^k_T) \Big]\\
    &\quad +\E^{\P^\alpha}\Big[G(Y_T, \widehat m^k_T) - G(\widehat Y_T^k, \widehat m_T^k) \Big] + \E^{\P^{\alpha^k}}\Big[G(\widehat Y^k_T, m_T) - G( Y_T,  m_T) \Big]\\
    &\ge \E^{\P^\alpha}\Big[G(Y_T, \widehat m^k_T) - G(\widehat Y_T^k, \widehat m_T^k) \Big] + \E^{\P^{\alpha^k}}\Big[G(\widehat Y^k_T, m_T) - G( Y_T,  m_T) \Big].
  \end{align*}
  Similarly, using the Lasry-Lions monotonicity condition for $F$, we have
  \begin{align*}
    &\E^{\P^{\alpha}}\bigg[\int_0^TF(Y_t, m_t) - F(\widehat Y_t^k, \widehat m_t^k)dt \bigg] - \E^{\P^{\alpha^k}}\bigg[\int_0^TF(Y_t, m_t) - F(\widehat Y_t^k, \widehat m_t^k)dt \bigg]\\
    &\ge \E^{\P^\alpha}\bigg[\int_0^TF(Y_t, \widehat m^k_t) - F(\widehat Y_t^k, \widehat m_t^k)dt \bigg] + \E^{\P^{\alpha^k}}\bigg[\int_0^TF(\widehat Y^k_t, m_t) - F( Y_t,  m_t)dt \bigg].
  \end{align*}

  Let us now set
  \begin{equation*}\label{eq estimate E1}
  \begin{aligned}
    E_1^k &:= \E^{\P^{\alpha}}\bigg[\int_0^TF(Y_t, \widehat m^k_t) - F(\widehat Y^k_t, \widehat m^k_t)dt\bigg] + \E^{\P^{\alpha^k}}\bigg[\int_0^TF( \widehat Y ^k_t,   m_t) - F(Y_t, m_t)dt\bigg]\\
    &\quad + \E^{\P^{\alpha}}\bigg[G(Y_T, \widehat m^k_T) - G(\widehat Y^k_T, \widehat m^k_T)\bigg] + \E^{\P^{\alpha^k}}\bigg[G( \widehat Y ^k_T,  m _T) - G(Y_T, m_T)\bigg]. 
  \end{aligned}
  \end{equation*}
  {Recall the $C_h$-convexity condition for $h_0$ in $a$, and the fact that $\alpha_t(Y_t)$ and $\widehat\alpha^k_{t_i} (\widehat Y_{t_i}^k)$, respectively, minimize $a\mapsto h_0(Y_t, a, \cZ_t)$ and $a\mapsto h_0 (\widehat Y^k_{t_i}, a, \widehat\cZ^k_{t_i})$. 
  These conditions imply $|a - \widehat\alpha^k_t(\widehat Y^k_t)|^2 \le \frac{4}{C_h}(h_0(\widehat Y^k_t, a, \widehat \cZ^k_t) - h_0(\widehat Y^k_t, \widehat\alpha^k_t(\widehat Y^k_t), \widehat \cZ^k_t))$ for every $a\in A$ (see e.g. \cite[Lemma 5.2]{Car-Tan-Zha24} for details.)} 
  {The same argument applies to $\alpha_t(Y_t)$.}
  
  Coming back to \eqref{eq:Before.LL}, using  the latter inequalities and the definition of $E_1^k$, we obtain 
   \begin{equation}\label{eq estimate ultimate rate of convergence}
    \begin{aligned}
    0 & \ge E^k_1 + \E^{\P^\alpha}[ \widehat C^k_{T}]
    +\E^{\P^{\alpha}}\bigg[\int_0^Th_0(Y_t, \alpha_t(Y_t), \widehat\cZ^k_t) - h_0(\widehat Y_t^k, \widehat\alpha^k_t(\widehat Y^k_t), \widehat\cZ^k_t)dt\bigg]\\ 
    & - \E^{\P^{\alpha^k}}\bigg[\int_0^Th_0(Y_t, \alpha_t(Y_t), \cZ_t) - h_0(\widehat Y_t^k, \widehat\alpha^k_t( \widehat Y^k_t) ,\cZ_t)dt \bigg]\\
    &= E^k_1 + \E^{\P^\alpha}[ \widehat C^k_{T}]
    +\E^{\P^{\alpha}}\bigg[\int_0^Th_0(Y_t, \alpha_t(Y_t), \widehat\cZ^k_t) - h_0(\widehat Y^k_t,\alpha_t(Y_t), \widehat \cZ^k_t )dt\bigg]\\
    &\quad  + \E^{\P^{\alpha}}\bigg[\int_0^T h_0(\widehat Y^k_t,\alpha_t(Y_t), {\widehat\cZ^k_t})- h_0(\widehat Y_t^k, \widehat\alpha^k_t(\widehat Y^k_t), \widehat\cZ^k_t)dt\bigg]\\ 
    &\quad - \E^{\P^{\alpha^k}}\bigg[\int_0^Th_0(Y_t, \alpha_t(Y_t), \cZ_t) - h_0(Y_t, \widehat\alpha_t^k(\widehat Y_t^k), \cZ_t)dt\bigg]\\
    &\quad - \E^{\P^{\alpha^k}}\bigg[\int_0^T h_0(Y_t, \widehat \alpha_t^k(\widehat Y_t^k), \cZ_t)   - h_0(\widehat Y_t^k, \widehat\alpha^k_t(\widehat Y^k_t) ,\cZ_t)dt \bigg]\\
    &\ge E^k_1 + \E^{\P^\alpha}[ \widehat C^k_{T}] + E^k_2 + \frac{C_h}{4}\Big(\E^{\P^{\alpha}} + \E^{\P^{\alpha^k}}\Big)\bigg[\int_0^T|\widehat\alpha_t^k(\widehat Y_t^k) - \alpha_t(Y_t)|^2dt\bigg],
  \end{aligned}
  \end{equation}
  where $E^k_2$ is given by
  \begin{align*}
    E_2^k: &= \E^{\P^{\alpha}}\bigg[\int_0^Th_0(Y_t, \alpha_t(Y_t), \widehat\cZ^k_t) - h_0(\widehat Y_t^k,  \alpha _t(Y _t), \widehat\cZ^k_t)dt\bigg]\\
    &\quad - \E^{\P^{\alpha^k}}\bigg[\int_0^T h_0(Y_t, \widehat \alpha_t^k( \widehat Y_t^k), \cZ_t)   - h_0(\widehat Y_t^k, \widehat\alpha^k_t(\widehat Y^k_t) ,\cZ_t)dt \bigg].
  \end{align*}
  
 \smallbreak\noindent
\emph{Step 3.} 
In this step, we estimate the terms  $ E^k_1, \E^{\P^\alpha}[  \widehat C^k_{T}] , E^k_2$ in \eqref{eq estimate ultimate rate of convergence} in order to conclude the proof.
  To estimate $E_1^k$, observe that, by the (local) Lipschitz-continuity condition of $F$ and $G$, we have
  \begin{align*}
    |E_1^k| & \le C\E^{\P^\alpha}\bigg[\int_0^T(|Y_t| + |\widehat Y^k_t|)|Y_t - \widehat Y^k_t|dt  \bigg] + C\E^{\P^{\alpha^k}}\bigg[\int_0^T(|Y_t| + |\widehat Y^k_t|)|Y_t - \widehat Y^k_t|dt  \bigg]\\
    &\quad + C\E^{\P^\alpha}\bigg[(|Y_T| + |\widehat Y^k_T|)|Y_T - \widehat Y^k_T| \bigg] + C\E^{\P^{\alpha^k}}\bigg[(|Y_T| + |\widehat Y^k_T|)|Y_T - \widehat Y^k_T|\bigg]. 
      \end{align*}
  Thus, using   \eqref{eq estimate Yk to Y rate} and Hölder's inequality gives
  \begin{equation}
      \label{eq estimate E1 second}
      \begin{aligned}
     | E_1^k|  \leq &  C \E \Big[ \big( ( \cE ^{\alpha}_T) ^2 + ( \cE ^{\alpha^k}_T)^2 \big) \sup_{t\in[0,T]}\big( | Y_t|^2 +| \widehat Y_t^k|^2 \big)\Big]^{\frac12} \E\bigg[\int_0^T|Y_t -\widehat Y^k_t|^2dt +|Y_T -\widehat Y_T^k| ^2 \bigg]^{\frac12}\\ 
      & \leq C \sqrt{\delta_k},     
      \end{aligned}
  \end{equation}
  where the last inequality uses \eqref{eq estimate Yk to Y rate}, the boundedness of $A$, and the fact that $W$ and $\xi$ have moments of order $4$.

  Concerning $E_2^k$, we also use the local Lipschitz property of $h_0$ in $x$ and H\"older inequality as above to arrive at
    \begin{equation}
      \label{eq estimate E2 second}
      \begin{aligned}
     | E_2^k|  \leq &  C \E \Big[ \big( ( \cE ^{\alpha}_T) ^2 + ( \cE ^{\alpha^k}_T)^2 \big) \sup_{t\in[0,T]}\big( | Y_t|^2 +|\widehat Y_t^k|^2 \big)\Big]^{\frac12} \E\bigg[\int_0^T|Y_t -\widehat Y^k_t|^2dt  \bigg]^{\frac12} 
       \leq C \sqrt{\delta_k}.      
      \end{aligned}
  \end{equation}

We finally move to the estimation of $\E^{\P^{\alpha}}[ \widehat C^k_{T}]$.
To this end, we will discretize the feedback $\alpha$ via a sequence of maps $(\tilde \alpha^k_{t_i} ) _{i=0,...,k}$, {where $\tilde \alpha^k_{t_i} (x) = \alpha_{t_i}(x), \ i =0, \dots, k$.}
Alternatively, we may write $\tilde \alpha^k_t (\cdot) = \alpha_{\eta^k(t)}(\cdot)$.
To simplify the notation, in this final part of the proof we will assume (without loss of generality) $\sigma$ to be the identity matrix.
Define the $\mathbb{F}^k$-measurable process $\cE ^{ \tilde \alpha ^k} = (\cE ^{ \tilde \alpha ^k}_{t_i})_{i=0,...,k}$ as the solution to the equation
  \begin{equation*}
      \cE^{\tilde \alpha^k}_{t_i} = 1 + \sum_{j=0}^{i-1}\tilde \alpha ^k_{t_{j}} ( Y^k_{t_j})\cE^{\tilde \alpha^k}_{t_j}(W_{t_{j+1}} - W_{t_j}), \quad i=0, \dots, k.
\end{equation*}
Using the fact that $C^k$ is a $\P$-martingale with $C^k_{t_0} = 0$ and which is orthogonal to $(W_{t_i})_{i=0,...,k}$, with $T = t_k$ we find
  \begin{equation}\label{eq expected k CT 0}
    \E [  \cE^{\tilde \alpha^k}_{T}  \widehat C_T^k]  = \E \bigg[C_{t_k}^k + \sum_{j=0}^{k-1}\tilde \alpha ^k_{t_j }\cE^{\tilde \alpha^k}_{t_j}(W_{t_{j+1}} - W_{t_j})C^k_{t_k} \bigg] = 0.
    \end{equation}
Introduce the linear continuous-time interpolation $\widehat \cE ^{ \tilde \alpha ^k}$ of the process $\cE ^{ \tilde \alpha ^k}$ as the stochastic integral
$$
 \widehat \cE ^{ \tilde \alpha ^k}_t = 1+ \int_0^t \tilde \alpha ^k_{\eta^k(t)} ( \widehat Y^k_{\eta^k(t)})  \cE^{\tilde \alpha^k}_{\eta^k(t)} d W_t, \quad t \in [0,T],
$$
and notice that $\widehat \cE ^{ \tilde \alpha ^k}_{t_i} = \cE ^{ \tilde \alpha ^k}_{t_i},$ for $i=0,\dots,k$.
Hence, it follows that $\widehat \cE ^{ \tilde \alpha ^k}_{\eta^k(t)} = \cE ^{ \tilde \alpha ^k}_{\eta^k(t)}$ for any $t \in [0,T]$.
Notice also that, by the boundedness of $A$ (thus, of the moments of $ \widehat \cE ^{ \tilde \alpha ^k}$),  for any $t \in [0,T]$, we have
\begin{equation}\label{eq E linear vrcadlag}
\begin{aligned} 
\E \Big[ \big|\widehat \cE ^{ \tilde \alpha ^k}_t - \widehat \cE ^{ \tilde \alpha ^k}_{\eta^k(t)} \big|^2 \Big]
& = \E \bigg[ \bigg| \int_{\eta^k(t)}^t  \tilde \alpha ^k_{\eta^k(s)} ( \widehat Y^k_{\eta^k(s)}) \widehat \cE^{\tilde \alpha^k}_{\eta^k(s)} d W_s  \bigg|^2 \bigg] \\
& =   \int_{\eta^k(t)}^t \E \Big[ \big|  \tilde \alpha ^k_{\eta^k(s)} ( \widehat Y^k_{\eta^k(s)}) \widehat \cE^{\tilde \alpha^k}_{\eta^k(s)} \big|^2 \Big] ds \leq C \delta_k.
\end{aligned}
\end{equation}
Recall that the process $(\cE^{\alpha}_t)_{t\in [0,T]}$ solves the SDE
  \begin{equation*}
    \cE^{\alpha}_t = 1 + \int_0^t  \alpha_s \cE^{ \alpha }_sdW_s,
  \end{equation*}
so that that $\widehat \cE^{\tilde\alpha^k}$ is nothing but a discretization of $\cE^\alpha$.

We now proceed with the estimation of $\E^{\P^{\alpha}}[ \widehat C^k_{T}]$.
Thanks to \eqref{eq expected k CT 0}, by H\"older's inequality, we find
  \begin{align*}
    \E^{\P^\alpha}[ \widehat C^k_{T}] & = \E [(\cE^{\alpha}_T -\widehat \cE ^{\tilde\alpha^k}_T) \widehat C^k_{T}] + \E  [\cE ^{\tilde\alpha^k}_T  \widehat C_T^k]\\
    & \le \E\big[\big|\cE^{\alpha}_T - \widehat \cE ^{\tilde\alpha^k}_T \big|^2\big]^{\frac12}\E\big[| \widehat  C^k_T|^2 \big]^{\frac12}\le \E\big[\big|\cE^{\alpha}_T - \widehat \cE ^{\tilde\alpha^k}_T \big|^2\big]^{\frac12},
  \end{align*}
where the latter inequality follows from the fact that $\sup_k\E[| \widehat C^k_T|^2]<\infty$, since $( \widehat C^k_T)_k$ converges to zero in $L^2$, see e.g.\ \cite{Brian-Dely-Mem02}. 
Next, we have
\begin{align*}
    \E\big[|\cE^{\alpha}_t - \widehat \cE^{\tilde\alpha^k}_t \big|^2] 
    & = \E \bigg[ \bigg| \int_0^t \Big( \alpha_s(Y_s) \cE^{\alpha}_s - \alpha_{\eta^k(s)} (\widehat Y ^k _{\eta^k(s)}) \widehat \cE ^{\tilde\alpha^k}_{\eta^k(s)}     \Big) d W_s \bigg|^2 \bigg] \\
    & = \E \bigg[  \int_0^t \Big| \alpha_s(Y_s) \cE^{\alpha}_s - \alpha_{\eta^k(s)} (\widehat Y ^k _{\eta^k(s)}) \widehat \cE ^{\tilde\alpha^k}_{\eta^k(s)} \Big|^2 d s \bigg]\\
    & \leq  C \int_0^t \E \Big[ \big| \cE^{\alpha}_s - \widehat \cE ^{\tilde\alpha^k}_{\eta^k(s)} \big|^2 \Big] 
    + \E \Big[ \big| \widehat \cE ^{\tilde\alpha^k}_{\eta^k(s)} \big|^2 \big| \alpha_s(Y_s) - \alpha_{\eta^k(s)} (\widehat Y ^k _{\eta^k(s)})  \big|^2 \Big]d s \\
    & \leq  C \int_0^t \E \Big[ \big| \cE^{\alpha}_s - \widehat \cE ^{\tilde\alpha^k}_{\eta^k(s)} \big|^2 \Big] 
    + \Big( \E \Big[ \big| \alpha_s(Y_s) - \alpha_{\eta^k(s)} (\widehat Y ^k _{\eta^k(s)})  \big|^4 \Big] \Big) ^{\frac12} d s.
\end{align*}
Using \eqref{eq E linear vrcadlag}, the regularity of $\alpha$, and then \eqref{eq estimate Yk to Y rate},  we find
\begin{equation}\label{label:alpharegularity}
    \begin{split}
    \E\big[|\cE^{\alpha}_t - \widehat \cE^{\tilde\alpha^k}_t \big|^2] 
    & \leq  C \bigg( \delta_k + \int_0^t \E \Big[ \big| \cE^{\alpha}_s - \widehat \cE ^{\tilde\alpha^k}_{s} \big|^2 \Big] 
    + \Big(  | s -  \eta^k(s) \big|^2 +\E \big[ | Y_s - \widehat Y ^k _{\eta^k(s)} | ^4 \big] \Big) ^{\frac12} d s \bigg) \\
    & \leq  C \bigg( \delta_k + \int_0^t \E \Big[ \big| \cE^{\alpha}_s - \widehat \cE ^{\tilde\alpha^k}_{s} \big|^2 \Big] ds \bigg).
    \end{split}
\end{equation}
Thanks to Gronwall's inequality, we conclude that
 $\E\big[|\cE^{\alpha}_T - \widehat \cE^{\tilde\alpha^k}_T \big|^2] \leq C \delta_k$, which in turn gives 
 $$ 
 \E^{\P^\alpha}[ \widehat C^k_{T}] \leq C \delta_k.
 $$ 
Combining this with \eqref{eq estimate E1 second} and \eqref{eq estimate E2 second}, it follows from \eqref{eq estimate ultimate rate of convergence} that \eqref{eq:estima.alpha.LL} holds.
 \smallbreak\noindent
\emph{Proof of \eqref{eq:estima.X.LL}.}
    Recall the notation
  \begin{equation*}
    X^k_{t_i} = \xi + \sum_{j=0}^{k-1} ( b_0 (X^k_{t_j}) + \alpha^k_{t_j}(X^k_{t_j})) \delta_k +\sigma W_{t_i}
    \quad \text{and}\quad     
    X_t = \xi +\int_0^t b_0(X_s)  + \alpha_s(X_s) ds + \sigma W_t
  \end{equation*}
    for the discrete-time and continuous-time state processes, respectively, and also recall the continuous-time interpolation $\widehat X^k$ of $X^k$ given in \eqref{eq:cont.time.approx}.
    Using Lipschitz-continuity of $b$ and Gronwall's inequality, we directly obtain
    \begin{align*}
      \E\Big[\sup_{t\in [0,T]}|\widehat X^k_t - X_t|^2\Big] &\le C\E\bigg[\int_0^T|\widehat \alpha^k_t(\widehat X^k_t) - \alpha_t(X_t)|^2dt\bigg] + C {\delta_k}\\
      &\le C\E\bigg[\int_0^T|\alpha_t(X_t) -  \alpha_t(\widehat X^k_t)|^2dt\bigg] + C\E\bigg[\int_0^T|\alpha_t(\widehat X_t^k) - \widehat \alpha_t^k( \widehat X^k_t)|^2dt\bigg]+ C {\delta_k}.
     \end{align*}    
  Thus, using Lispchitz-continuity of $\alpha$ and Gronwall's inequality yields
  \begin{align*}
         \E\Big[\sup_{t\in [0,T]} &|\widehat X^k_t - X_t|^2\Big] \\
        &\le  C\E\bigg[\int_0^T|\alpha_t(\widehat X_t^k) - \widehat \alpha_t^k(\widehat X^k_t)|^2dt\bigg]+ C {\delta_k}\\
        & = C\E^{\P^{\alpha^k}}\bigg[\int_0^T|\alpha_t(\widehat Y_t^k) - \widehat \alpha_t^k(\widehat Y^k_t)|^2dt\bigg]+ C {\delta_k} \\
        & \le C\E^{\P^{\widehat \alpha^k}}\bigg[\int_0^T|\alpha_t(\widehat Y_t^k) - \alpha_t(Y_t)|^2dt\bigg]+ C\E^{\P^{\alpha^k}}\bigg[\int_0^T|\alpha_t(Y_t) - \widehat \alpha_t^k(\widehat Y^k_t)|^2dt\bigg] + C {\delta_k} \\
        &\le C\E\Big[\Big( \frac{d\mathbb P^{\alpha^k}}{d\mathbb P} \Big)^2\Big]^{\frac12}\E\bigg[\int_0^T|\widehat Y^k_t - Y_t|^2dt\bigg]^{\frac12} + C {\delta_k}\\
        &\le  C {\delta_k},
  \end{align*}
  where we used \eqref{eq estimate Yk to Y rate} and the boundedness of $A$.
  With this estimation at hand, the bound on the controls follows by essentially the same arguments as above:
  \begin{align*}
    \E\bigg[\int_0^T|\alpha_t(X_t) -\widehat \alpha^k_t(\widehat X^k_t)|^2dt\bigg] &\le C\E\bigg[\int_0^T|\alpha_t(X_t) - \alpha_t(\widehat X^k_t)|^2dt\bigg] + \E^{\P^{\alpha^k}}\bigg[\int_0^T|\alpha_t(\widehat Y^k_t) - \widehat \alpha_t^k( \widehat Y^k_t)|^2dt\bigg]\\
    &\le C\E\bigg[\int_0^T|X_t - \widehat X^k_t|^2dt\bigg] + C\E^{\P^{\alpha^k}}\bigg[\int_0^T|Y_t -\widehat Y^k_t|^2dt\bigg] \\
    &\quad + C\E^{\P^{\alpha^k}}\bigg[\int_0^T|\alpha_t(Y_t) -\widehat \alpha_t^k(\widehat Y^k_t)|^2dt \bigg]\\
    &\le C {\delta_k},
  \end{align*}
  which, since $\delta_k = T/k$, completes the proof.
  \end{proof}

\begin{remark}\label{remark:alternativeconditions}
    We emphasize on the fact that the regularity conditions on $\alpha$ are only used in \eqref{label:alpharegularity}. Alternative conditions on the \emph{process} can also achieve the same rate, given the fact that $\alpha$ can be identified with a Lipschitz function of $\cZ$, the BSDE solution. Indeed, under strong convexity, the optimal control is $\P\otimes dt$ unique, and $\alpha$ can be identified with $\hat{a}(X_t, \cZ_t)$ where $\hat{a}$ is the minimizer function of the reduced Hamiltonian $h_0$. The same rate can then be achieved if $\hat{a}$ is Lipschitz 
    (as e.g.\ under the conditions of \cite[Lemma 3.3]{cardelbook1}) and $\cZ$ satisfies $$\E\big[|\cZ_t - \cZ_s|^p\big] \leq C|t-s|^{p/2},$$
    which can be achieved, for example, in \cite{hu2011malliavin}.
\end{remark}

\subsection{A possible simulation method}
\label{sec:simulation}
In light of the results of this section, we take a moment to comment on a possible simulation method.

It is interesting to notice that computing solutions of difference equations can be done via simple recursion methods.
Therefore, combining Theorem \ref{thm:compactness} and Proposition \ref{prop:Hamilton} suggests a simulation algorithm for MFGs.
To make this idea more precise, observe that the difference equation  \eqref{eq:BSDeltaE} is equivalent to 
\begin{equation}
  \label{eq:BSDeltaE_iterate}
    \cY_{t_i} = \cY_{t_{i+1}}  + H(Y_{t_i},  m_{t_i}, \cZ_{t_i})\delta - \cZ_{t_i}(Z^k_{t_{i+1}} - Z^k_{t_i}) -\big(C_{t_{i+1}}- C_{t_i} \big) \quad \P\text{-a.s. for all}\quad i=0,\dots, k-1
  \end{equation}
  with $\cY_{t_k} = G(Y_{t_k}, m_{t_k})$ and that any solution $(\alpha_{t_i},m_{t_i})_{i=0,\dots,k}$ of the MFG would satisfy $\alpha_{t_i} \in \argmin_{a\in A}h(Y_{t_i}, a, m_{t_i}, \cZ_{t_i})$ and $m_{t_i} = \P^{\alpha,m}\circ Y_{t_i}^{-1}$.
To calculate $\cZ_{t_i}$ (and thus $\alpha_{t_i}$), we take conditional expectation with respect to $\cF^{Z^k}_{t_i}$ in \eqref{eq:BSDeltaE_iterate} to get $$\cY_{t_i} = \E[\cY_{t_{i+1}}|\cF^{Z^k}_{t_i}]+ H(Y_{t_i},  m_{t_i}, \cZ_{t_i})\delta.$$
Next,  multiplying \eqref{eq:BSDeltaE_iterate} by $Z^k_{t_{i+1}} - Z^k_{t_i}$, and then taking expectation, it follows by orthogonality of $C$ and independence of the increments of $Z^k$ that
\begin{equation}
\label{eq:BSDeltaE.Zsol}
  \cZ_{t_i} = \E[\cY_{t_{i+1}}(Z^k_{t_{i+1}} - Z^k_{t_i})|\cF^{Z^k}_{t_i}]/\delta
\end{equation}
 and consequently, $C_{t_{i+1}} - C_{t_i} = \cY_{t_{i+1}} - \E[\cY_{t_{i+1}} \mid \cF^{Z^k}_{t_i}] - \cZ_{t_{i+1}}(Z^k_{t_{i+1}} - Z^k_{t_i})$.
This suggests a simulation algorithm for the mean field equilibrium (at least when the Hamiltonian admits a unique maximizer).
\begin{enumerate}
  \item Pick a flow of measure $m^0\in \cP(\R^d)^k$ to initialize the algorithm. 
  \item Let $\cY_{t_k} = G(Y_{t_k}, m_{t_k}^0)$ and $\cZ_{t_k} = 0$.
  \item For $i=k-1,\dots,0$, recursively compute
  \begin{itemize}
    \item $\cZ_{t_i} = \frac{1}{\delta}\E[\cY_{t_{i+1}}(Z^k_{t_{i+1}} - Z^k_{t_i})|\cF^{Z^k}_{t_i}]$,
    \item $\cY_{t_i} = \E[\cY_{t_{i+1}}| \cF^{Z^k}_{t_i}] + \delta \inf_{a\in A}h(Y_{t_i}, m_{t_i}^0, a, \cZ_{t_i})$.
  \end{itemize}
      \item Update $m^0$ as $m^0_{t_i} = \P^{\alpha,m^0}\circ Y^{-1}_{t_i}$ and repeat from step (2) until consecutive measures $m^0$ become close enough.
\end{enumerate}
This simulation method is interesting insofar that it does not involve solving an infinite dimensional optimization problem (but only optimization of $H$ over $A$) or solving coupled systems of forward-backward SDEs of McKean-Vlasov type, or PDE systems as current state of the art methods do. We refer to \cite{Day-Lau24,Ach-Cap10,Laur21} and references therein for surveys of results and methods on simulation of MFGs. 

The above suggested algorithm was applied (albeit in a simple linear quadratic case) to an optimal execution problem in finance in \cite{Tangpi-Wang24FS}.
In our framework, Theorem \ref{thm:compactness} justifies convergence of this algorithm, and an explicit convergence rate is shown in Theorem \ref{thm:BSDE.rate.LL}.

\bibliographystyle{plainnat}
\bibliography{dispmonotone, jodi.bib}

\end{document}